\newcommand{\numberthis}[1]{\addtocounter{equation}{1}\tag{\theequation}\label{#1}}
\def\Aut{\operatorname{Aut}}
\def\hom{\operatorname{Hom}}
\def\O{{\operatorname{O}}}          
\def\EO{{\operatorname{EO}}}          
\newtheorem{theorem}{Theorem}[section]
\newtheorem{lem}[theorem]{Lemma}
\newtheorem{cor}[theorem]{Corollary}
\theoremstyle{definition}
\newtheorem{rem}[theorem]{Remark}
\newtheorem{defn}[theorem]{Definition}
\newtheorem{notn}[theorem]{Notation}
\title[Yoga of Commutators]{{{ Yoga of Commutators in Roy's Elementary Orthogonal Group}}}
\author{A. A. Ambily}
\address{\newline Statistics and Mathematics Unit
\newline Indian Statistical Institute
\newline Bangalore 560 059
\newline India.
\newline {\emph{e-mail: } \tt ambily@isibang.ac.in}}
\keywords{Quadratic modules, Dickson--Siegel--Eichler--Roy
transformations,  Local-Global Principle}
\subjclass[2010]{ 19G99, 20H25, 13C10, 11E70}
\begin{document}
\begin{abstract} In this article, we give explicit proofs of certain commutator relations among the elementary generators of the elementary orthogonal group $\EO_A(Q\!\perp\! H(P))$, where $A$ is a commutative ring, $Q$ is a non-singular quadratic $A$-space and $H(P)$ is the hyperbolic space of a finitely generated projective module $P$ with the natural quadratic form. Using these relations, we established a local-global principle of D. Quillen for the Dickson--Siegel--Eichler--Roy (DSER) elementary orthogonal transformations in~\cite{aarr}. In \cite{aa1}, by using these commutator relations, we prove the normality of this elementary group in the orthogonal group under some conditions  on the hyperbolic rank. Also, these relations are used to obtain further information about this orthogonal group and in comparing it with similar groups such as Hermitian groups and odd unitary groups.
\end{abstract}
\maketitle
\section{Introduction}

Commutator relations involving elementary matrices play a key role
in answering questions in the K-theory of rings. Steinberg's
celebrated commutator formulae were generalized to the setting of
Chevalley-Demazure group schemes over commutative rings by Michael
Stein. The commutator formulae were pivotal in obtaining
local-global principles for various groups. Localisation is one of
the most powerful tools in the study of structure of quadratic
modules and  more generally, of algebraic groups over rings. It
helps to reduce many important problems over arbitrary commutative
rings to similar problems for semi-local rings. Localisation comes
in a number of versions such as {\bf localisation and patching},
proposed by D. Quillen in \cite{MR0427303} and A.~A. Suslin in \cite{MR0472792}, 
and {\bf localisation-completion}, proposed
by A. Bak (see \cite{MR1115826}). Both of these methods rely on the
{\it yoga of commutators}. This term was coined by R. Hazrat, A. Stepanov,
N.~A. Vavilov, Z. Zhang (see \cite{MR2822507}) and stands for a large body of
common calculations, known as {\bf conjugation calculus} and as {\bf
commutator calculus}. Their main objective is to obtain explicit
estimates of the modulus of continuity in $s$-adic topology for
conjugation by a specific matrix, and to calculate mutual commutator
subgroups, nilpotent filtration etc.

In this article, we consider a group of transformations defined and
studied by A. Roy in his Ph.\,D. thesis (see \cite{MR0231844})
generalizing the classical Eichler-Siegel transformations to
commutative rings. These elementary transformations are defined for
quadratic spaces $Q\!\perp\! H(P)$ with a hyperbolic summand over a
commutative ring and we call this elementary orthogonal group as 
Dickson--Siegel--Eichler--Roy or DSER group.
We establish several commutator relations among
Roy's elementary transformations. We have used these relations to
establish a local-global principle for Roy's group of orthogonal
transformations over a polynomial extension(see \cite{aarr}). In
addition, we have also used these commutator formulae to show that
Roy's elementary orthogonal group is normalised by the orthogonal group of smaller size and under certain stable range conditions the elementary orthogonal group is normal in the full
orthogonal group in \cite{aa1}. Further, in the same article, we proved a stability result for $K_1$ of the orthogonal group using the commutator formulae proved in this article. Thus, the commutator
formulae developed here have proved to be quite useful.

{\it Even though we needed only commutator relations in the above
applications most of the time, the relations themselves could be
found only after computing the commutators explicitly. Thus, we are
forced to work out the rather involved expressions for commutators
appearing in this article. Our commutator formulae are done by hand
although the shape emerged in a few small-dimensional cases using
the computer algebra system GAP} (see \cite{gap}).

\section{Preliminaries}\label{prelim}
Let $A$ be a commutative ring in which $2$ is invertible. A {\em quadratic $A$-module} is a pair $(M,q)$, where $M$ is an $A$-module and $q$ is a quadratic form on $M$. A {\em quadratic space} over $A$
is a pair $(M,q)$, where $M$ is a finitely generated projective $A$-module and $q:M\longrightarrow A$ is a non-singular quadratic form. Let $M^*$ denote the dual of the module $M$. Let $B_q$ be the symmetric bilinear form associated to $q$ on $M$, which is given by $B_q(x,y) = q(x+y)-q(x)-q(y)$ and $d_{B_q} : M \rightarrow M^*$ be the induced isomorphism given by $d_{B_q}(x)(y) = B_q(x,y)$, where $x,y\in M$. Given two quadratic $A$-modules $(M_1,q_1)$ and $(M_2,q_2)$, their orthogonal sum $(M,q)$ is defined by taking
$M=M_1\oplus M_2$ and $q((x_1,x_2))=q_1(x_1)+q_2(x_2)$ for $x_1 \in M_1, x_2 \in M_2$. Denote $(M,q)$ by $(M_1,q_1)\!\perp\!(M_2,q_2)$ and $q$ by $q_1 \!\perp\! q_2$.

Let $P$ be a finitely generated projective $A$-module. The module $P \oplus P^*$ has a natural quadratic form given by $p((x,f)) = f(x)$ for $x\in P$, $f\in P^*$. The corresponding bilinear form $B_p$ is given
by $B_p((x_1,f_1),(x_2,f_2)) = f_1(x_2)+f_2(x_1)$ for $x_1,x_2 \in P$ and $f_1,f_2 \in P^*$. The quadratic space $(P \oplus P^*, p)$, denoted by $H(P)$, is called the {\em hyperbolic space} of $P$. A quadratic space
$M$ is said to be hyperbolic if it is isometric to $H(P)$ for some $P$. The quadratic space $H(A)$, denoted by $h$, is called a \emph{hyperbolic plane}. The orthogonal sum $h\!\perp\! h \!\perp\! \cdots \!\perp\! h$
of $n$ hyperbolic planes is denoted by $h^n$.

Let $Q$ be a quadratic $A$-space and $P$ be a finitely generated projective $A$-module. Now let $M = Q \!\perp\! H(P)$. This is a quadratic space with the quadratic form $q\!\perp\! p$. The associated bilinear form on $M$, denoted by $\langle \cdot , \cdot\rangle$, is given by
\begin{equation*}
\langle (a,x),(b,y) \rangle = B_q((a,b)) + B_p((x,y))\textnormal{ for all } a,b \in Q \textnormal{ and } \,x,y \in H(P),
\end{equation*} where $B_q$ and $B_p$ are the bilinear forms on $Q$ and $P$.  Let $M = M(B,q)$ be a quadratic module over $A$ with quadratic form $q$ and associated symmetric bilinear form $B$. Then the orthogonal group of $M$ is defined as follows:
\begin{equation}\label{oam}
\O_A(M) = \{\sigma \in \Aut(M)\mid q(\sigma(x))=q(x) \textnormal{ for all } x \in M\},
\end{equation} where $\Aut(M)$ be the group of all $A$-linear automorphisms of $M$.

For any $A$-linear map $\alpha : Q \rightarrow P$($\beta :
Q\rightarrow P^*$), the dual map $\alpha^t : P^* \rightarrow Q^*$
($\beta^t : P^{**}\simeq P \rightarrow Q^*$) is defined as
$\alpha^t(\varphi) = \varphi \circ \alpha$ ($\beta^t(\varphi^*) =
\varphi^* \circ \beta$) for $\varphi \in P^*$ ($\varphi^* \in P^{**}$). Recall from
\cite{MR0231844}, the $A$-linear map $\alpha^* : P^*\rightarrow Q$
($\beta^* : P\rightarrow Q$) is defined by $\alpha^* = d_{B_q}^{-1}\circ
\alpha^t$ ($\beta^* = d_{B_q}^{-1}\circ \beta^t \circ \varepsilon$,
where $\varepsilon$ is the natural isomorphism $P\rightarrow P^{**}$)
and is characterized by the relation
\[(f\circ\alpha)(z) = B_q\left(\alpha^*(f),z \right)  \textnormal{ for } \,f\in
P^*, z\in Q.\]

In \cite{MR0231844}, A. Roy defined the ``elementary'' transformations
$E_{\alpha}, E_{\beta}^*$ of $Q\!\perp\! H(P)$ given by
\[\begin{array}{lllll}
\medskip
E_{\alpha}(z) &= z+\alpha(z) & & E_{\beta}^*(z) &=z+\beta(z)\\
\medskip
E_{\alpha}(x) &= x & & E_{\beta}^*(x) &= -\beta^*(x)+ x-\frac{1} {2}\beta\beta^*(x)\\
\medskip
E_{\alpha}(f) &= -\alpha^*(f)-\frac{1}{2}\alpha\alpha^*(f)+f & & E_{\beta}^*(f) &= f
\end{array}\]
for $z \in Q, x \in P$ and $f\in P^*$.  Observe that
these transformations are orthogonal with respect to the above
quadratic form $q \!\perp\! p.$

We found it difficult to give a meaningful set of commutator relations
for the set of generators
$\{E_{\alpha}, E^*_{\beta}~|~\alpha \in \hom_A(Q, P), \beta \in \hom_A(Q,P^*)\}$.

Let $Q$ and $P$ be free $A$-modules. In this case, we could conceive of a
natural set of generators, for which we could develop the commutator
machinery. These generators will be denoted by $E_{\alpha_{ij}}$,
$E^*_{\beta_{ij}}$ below. We proceed to define these now.

Let $P$ and $Q$ be free modules of rank $m$ and $n$
respectively, then we can identify $P$, $P^*$ and $Q$ with $A^m$, $A^m$
and $A^n$ respectively. Let $\{z_i : 1\leq i\leq n \}$ be a basis for
$Q$, $\{g_i : 1\leq i\leq n \}$ be a basis for $Q^*$, $\{x_i : 1\leq
i\leq m \}$ be a basis for $P$ and $\{f_i : 1\leq i\leq m \}$ be a basis
for $P^*$.

Let $p_i: A^n \longrightarrow A$ be the projection onto the $i^{th}$
component and $\eta_i: A \longrightarrow A^n$ be the inclusion into
the $i^{th}$ component.  Let $\alpha \in \hom(Q,P)$. Let $\alpha_{i},
\alpha_{ij} \in \hom(Q,P)$ be the maps given by
$$\alpha_{i} = \eta_i\circ p_i\circ\alpha \quad\textnormal{and}\quad \alpha_{ij} =
\eta_i\circ p_i\circ\alpha\circ \eta_j\circ p_j$$
for $1 \leq i \leq m$ and $1\leq j \leq n$.
Clearly $\alpha = {\Sigma_{i=1}^m\alpha_{i}}
= {\Sigma_{i=1}^m\Sigma_{j=1}^n\alpha_{ij}}.$ Then
$\alpha_{i}^*,\alpha_{ij}^* \in \hom(P^*,Q)$ be the maps given by
$$\alpha_{i}^* = {(\alpha^*)}_i = \alpha^*\circ \eta_i\circ p_i \quad \textnormal{and}\quad \alpha_{ij}^* = (\alpha^*)_{ij}= \eta_j\circ p_j\circ\alpha^*\circ \eta_i\circ p_i.$$  Then $\alpha^* = {\Sigma_{i=1}^m\alpha_{i}^*} = {\Sigma_{i=1}^m\Sigma_{j=1}^n\alpha_{ij}^*}$. One can also see that this definition of $\alpha_{i}^*,\alpha_{ij}^*$ coincides with the one obtained by applying $\alpha^* = {d_{B_q}}^{-1}\circ \alpha^t \in \hom(P^*,Q)$ to $\alpha_{i}$ and $\alpha_{ij}$.

\vspace{2mm} Let $z = {\Sigma_{j=1}^n d_jz_j}\in Q$ for some $d_j \in
A$.  Then $\alpha$ is defined by  $\alpha(z_j) = x^{(j)} =
{\Sigma_{i=1}^m b_{ij}x_i} $  for some $b_{ij} \in A$ and  $\alpha(z)
= {{\Sigma_{j=1}^n\Sigma_{i=1}^m d_jb_{ij} x_i}}$, $\alpha_i(z) =
{{\Sigma_{j=1}^n d_jb_{ij} x_i}}$ and  $\alpha_{ij}(z) = d_jb_{ij}
x_i$.  Let $\alpha^*(f_i) = w_i$ for some $w_i \in Q$. If $f = {\Sigma_{i=1}^m c_if_i}$ for some $c_i \in A$, then $c_i = \langle
f,x_i \rangle$ and so $\alpha^*(f) = {\Sigma_{i=1}^m \langle f,x_i
\rangle w_i}$. If $w_i = {{\Sigma_{j=1}^n y_jz_j}}$ for some $y_j \in
A$, then $w_{ij} = y_jz_j\in Q$.

For $1 \leq i \leq m$ and $1 \leq j \leq n$, the maps $\alpha_i^*$ and
$\alpha_{ij}^*$'s are given by
\[ \begin{array}{llll}
 \alpha_i^*(f_j) = \begin{cases}
              w_i &\textnormal{if}\quad  j=i,\\
              0 &\textnormal{if}\quad    j \neq i.
           \end{cases} & & &
\alpha_{ij}^*(f_k) = \begin{cases}
             w_{ij}&\textnormal{if}\quad k=i,\\
             0 &\textnormal{if}\quad k \neq i.
              \end{cases}
      \end{array} \]
Let $\beta \in \hom(Q,P^*)$. Set $\beta^*(x_i) = v_i$ for
some $v_i \in Q$, let $v_{ij}$ denotes the element $\eta_j\circ
p_j(v_i) $. Now defining the maps $\beta_i, \beta_{ij}, \beta_i^*$,
$\beta_{ij}^*$ similarly and extending these to the whole of $Q\oplus
P\oplus P^{\ast}$, we will get the maps as follows:  For $z\in Q$,
$x\in P$, $f\in P^*$; $1\leq i\leq m$ and $1\leq j\leq n$;
\[\begin{array}{llllll}
\medskip
\alpha_{ij}(z,x,f) &= ( 0,\langle w_{ij},z \rangle x_i,0 ),  & &\beta_{ij}(z,x,f) &=
\left(0,0,\langle v_{ij},z \rangle f_i\right),\\

\medskip
\alpha_i(z,x,f) &= \left( 0,\langle w_i,z \rangle x_i,0 \right), &
&\beta_i(z,x,f) &= \left(0,0,\langle v_i,z \rangle f_i\right),\\
\medskip
\alpha(z,x,f) &= \left( 0,{\Sigma_{i=1}^m \langle w_i,z \rangle
x_i},0\right),& &\beta(z,x,f) &= \left(0,0,{\Sigma_{i=1}^m \langle
v_i,z \rangle f_i}\right),\\
\medskip
\alpha_{ij}^*(z,x,f) &= \left(\langle f,x_i \rangle w_{ij},0,0
\right),& &\beta^*_{ij}(z,x,f) &= \left(\langle x,f_i \rangle
v_{ij},0,0 \right),\\
\medskip
\alpha_i^*(z,x,f) &= \left(\langle f,x_i \rangle w_i,0,0 \right),&
&\beta^*_i(z,x,f) &= \left(\langle x,f_i \rangle v_i,0,0 \right),\\
\medskip
\alpha^*(z,x,f) &= \left({\Sigma_{i=1}^m \langle f,x_i \rangle
w_i},0,0 \right), & &\beta^*(z,x,f) &= \left({\Sigma_{i=1}^m \langle
x,f_i \rangle v_i},0,0 \right).
\end{array}\]
Also, $q(w_{ij}) = \frac{1}{2}\langle w_{ij},w_{ij}
\rangle$ and $q(v_{ij}) = \frac{1}{2}\langle v_{ij},v_{ij} \rangle$.

\vspace{3mm}
The orthogonal transformation $E_{{\alpha}_{ij}}$ on $Q\!\perp\! H(P)$ for $\alpha \in \hom(Q,P)$ is
given by
\begin{align*} E_{\alpha_{ij}}(z,x,f) =&
\Bigl(I-\alpha^*_{ij}+\alpha_{ij}-\frac{1}{2}\alpha_{ij}\alpha^*_{ij}\Bigr)(z,x,f)\\
=& \Bigl(z-\langle f,x_i \rangle w_{ij},\;x+\langle w_{ij},z\rangle
x_i -\langle f,x_i \rangle q(w_{ij})x_i,\;f\Bigr).
\end{align*}
The orthogonal transformation $E_{{\beta}_{ij}}^*$ of $Q\!\perp\! H(P)$ for $\beta \in \hom(Q,P^*)$ is
given by
\begin{align*} E_{\beta_{ij}}^*(z,x,f) = &
\Bigl(I-\beta_{ij}^*+\beta_{ij}-\frac{1}{2}\beta_{ij}\beta_{ij}^*\Bigr)(z,x,f)\\
=& \Bigl(z-\langle f_i,x \rangle v_{ij},\;x,\;f+\langle
v_{ij},z\rangle f_i -\langle x,f_i \rangle q(v_{ij})f_i\;\Bigr).
\end{align*}

\begin{notn}
 Let $G$ be a group and $a,b \in G$. Then $[a,b]$ denotes the commutator $aba^{-1}b^{-1}$.
\end{notn}

\section{Commutators of Elementary Transformations}
In this section, we establish various commutator relations among the elementary generators of Roy's elementary orthogonal group. We will carry out the computations in two different ways - one is by
choosing bases (which we call the method {\it using coordinates}), and the other by just using the formal definition without choosing bases (which we call the {\it coordinate-free method}). We need commutator relations of length up to 16. By the \textquoteleft length\textquoteright of a commutator, we mean the number of words in the commutator expression.

The following is a coordinate-free definition of the elementary generators.
\begin{defn}
For $\theta \in \hom_A(Q ,P)$ or $\hom_A(Q, P^*)$, define $\theta^*$ as $d_{B_q}^{-1}\circ \theta^t$ or $d_{B_q}^{-1}\circ \theta^t \circ \varepsilon$, where $\varepsilon$ is the natural isomorphism $P\rightarrow P^{**}$ according to whether $\theta \in \hom_A(Q,P)$ or $\hom_A(Q,P^*)$ respectively.
Then the elementary transformations $E_{\theta}$ and $E_{\theta}^{-1}$ are given by 
\begin{align*}
 E_{\theta} &= I + \theta - \theta^* - \frac{1}{2} \theta \theta^*,\\
 E_{\theta}^{-1}  &= I - \theta + \theta^* - \frac{1}{2} \theta \theta^* = E_{(-\theta)}.
\end{align*}
\end{defn}

We now give the definition of the elementary generators using coordinates.
\begin{defn}
Let $\alpha,\delta \in \hom_A(Q,P)$; $\beta,\gamma \in \hom_A(Q,P^*)$
and $w_i,t_i,v_i,c_i  \in Q$ for $1 \leq i \leq m$. Then, choosing
bases $\{x_i\}_{i=1}^m, \{f_i\}_{i=1}^m,\{z_i\}_{i=1}^m$ respectively for $P,P^*,Q$,
one can define the following elements in $\hom_A(Q \perp H(P))$.
      $$\begin{array}{llllllll}
	\medskip
        \alpha_{ij}\left(z,x,f\right) &= & (0,\langle w_{ij},z\rangle x_i,0), & & \alpha^{\ast}_{ij}\left(z,x,f\right) &= & (\langle f,x_i \rangle w_{ij},0,0),\\
        \medskip
        \delta_{kl}\left(z,x,f\right) &= & (0,\langle t_{kl},z\rangle x_k,0), & & \delta^{\ast}_{kl}\left(z,x,f\right) &= & (\langle f,x_k \rangle t_{kl},0,0),\\
        \medskip
        \beta_{ij}(z,x,f) &= &(0,0,\langle v_{ij},z\rangle f_i), & &
        \beta^{\ast}_{ij}(z,x,f) &= &(\langle x,f_i \rangle v_{ij},0,0),\\
        \medskip
        \gamma_{kl}(z,x,f) &= &(0,0,\langle c_{kl},z\rangle f_k), & &
        \gamma^{\ast}_{kl}(z,x,f) &= &(\langle x,f_k \rangle c_{kl},0,0).
        \end{array}$$
Here $w_{ij}, v_{ij}$ denote the elements $\eta_j\circ p_j(w_i) ,
\eta_j\circ p_j(v_i) $ respectively and $c_{kl}, t_{kl}$ denote the
elements $\eta_l\circ p_l(c_k),\eta_l\circ p_l(t_k)$, where $p_j$ is
the $j^{th}$ projection as defined in Section~\ref{prelim}.

Now, for $1 \leq i,k \leq m$ and $1 \leq j,l \leq n$, the
corresponding orthogonal transformations $E_{\alpha_{ij}},
E_{\delta_{kl}}, E_{\beta_{ij}}^*, E_{\gamma_{kl}}^*$ and their
inverses have the following form.
\begin{align*}
   E_{\alpha_{ij}}\left(z,x,f\right)
                              =\;& \Bigl(z-\langle f,x_i \rangle w_{ij},\;x+\langle w_{ij},z\rangle x_i - \langle f,x_i   \rangle q(w_{ij})x_i,\;f\Bigr),\\
   E_{\delta_{kl}}\left(z,x,f\right)
                              =\;& \Bigl(z-\langle f,x_k \rangle t_{kl},\;x+\langle t_{kl},z\rangle x_k  - \langle f,x_k \rangle q(t_{kl})x_k,\;f\Bigr),\\
   E_{\beta_{ij}}^*\left(z,x,f\right)
                              =\;& \Bigl(z-\langle f_i,x \rangle v_{ij},\;x,\;f+\langle v_{ij},z\rangle f_i  - \langle x,f_i \rangle q(v_{ij})f_i\Bigr),\\
   E_{\gamma_{kl}}^*\left(z,x,f\right)
                              =\;& \Bigl(z-\langle f_k,x \rangle c_{kl},\;x,f+\langle c_{kl},z\rangle f_k  - \langle x,f_k \rangle q(c_{kl})f_k\Bigr),\\
   E_{\alpha_{ij}}^{-1}\left(z,x,f\right)
                              =\;& \Bigl(z+\langle f,x_i \rangle w_{ij} , x-\langle w_{ij},z\rangle x_i
                        -\langle f,x_i \rangle q(w_{ij})x_i , f\Bigr),\\
   E_{\delta_{kl}}^{-1}\left(z,x,f\right)
                              =\;& \Bigl(\;z+\langle f,x_k \rangle t_{kl} , x-\langle t_{kl},z\rangle x_k
                        -\langle f,x_k \rangle q(t_{kl})x_k , f\Bigr),\\
   E_{\beta_{ij}}^{*^{-1}}\left(z,x,f\right)
                              =\;& \Bigl(z+\langle f_i,x \rangle v_{ij},\;x,\;f-\langle v_{ij},z\rangle f_i  - \langle x,f_i \rangle q(v_{ij})f_i\Bigr),\\
   E_{\gamma_{kl}}^{*^{-1}}\left(z,x,f\right)
                              =\;& \Bigl(z+\langle f_k,x \rangle c_{kl},\;x,f-\langle c_{kl},z\rangle f_k  - \langle x,f_k \rangle q(c_{kl})f_k\Bigr).
\end{align*}
\end{defn}
\medskip
The first (and the simplest) set of commutators which we compute is
between elementary generators corresponding to two elements of
$\hom_A(Q,P)$; this is given in the following lemma.

\begin{lem}\label{l01}
      Let $\alpha, \delta \in \hom_A(Q,P)$. Then, for $i,j,k,l$ with $1 \leq i,k \leq m$ and ${1 \leq j,l \leq n}$, the commutator of the type
      $\Bigl[ E_{\alpha_{ij}}, E_{\delta_{kl}}  \Bigr]$ is given by
        \begin{align*}
          \Bigl[ E_{\alpha_{ij}}, E_{\delta_{kl}}  \Bigr](z,x,f)
                  =& \Bigl( I + \delta_{kl}\alpha^{\ast}_{ij} -
                      \alpha_{ij} \delta^{\ast}_{kl} \Bigr)\left( z,x,f \right)\\
              =& \Bigl(z,\;x + \langle f,x_i \rangle \langle t_{kl},w_{ij} \rangle x_k -
              \langle f,x_k \rangle \langle w_{ij}, t_{kl}\rangle x_i,\;f\Bigr).
    \end{align*}
In particular, if $i=k$, then $\Bigl[ E_{\alpha_{ij}}, E_{\delta_{kl}}\Bigr] = I.$
\end{lem}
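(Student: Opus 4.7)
\medskip
\textbf{Proof proposal.} My plan is to work throughout with the coordinate-free presentation $E_{\theta} = I + \theta - \theta^* - \tfrac{1}{2}\theta\theta^*$ and to reduce the four-fold product defining the commutator by exploiting the restricted ``types'' of the generators. Since $\alpha_{ij}$ and $\delta_{kl}$ extract the $Q$-component of their input and land in $P$, while $\alpha^{*}_{ij}$ and $\delta^{*}_{kl}$ extract the $P^{*}$-component and land in $Q$, a composition of any number of these maps vanishes as soon as two consecutive factors have mismatched target and source. In particular, of all the degree-two compositions built from $\alpha_{ij}, \alpha^{*}_{ij}, \delta_{kl}, \delta^{*}_{kl}$, the only survivors are the four maps $P^{*} \to Q \to P$, namely $\alpha_{ij}\alpha^{*}_{ij}$, $\delta_{kl}\delta^{*}_{kl}$, $\alpha_{ij}\delta^{*}_{kl}$, and $\delta_{kl}\alpha^{*}_{ij}$; and every triple composition is zero because each of these degree-two maps has target $P$, whereas every generator takes input in $Q$ or $P^{*}$.

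Granted this, expanding $E_{\alpha_{ij}}E_{\delta_{kl}}$ and collecting only the non-vanishing terms yields
\[
E_{\alpha_{ij}}E_{\delta_{kl}} \;=\; I + L + M,
\]
where $L = \alpha_{ij} + \delta_{kl} - \alpha^{*}_{ij} - \delta^{*}_{kl}$ and $M = -\alpha_{ij}\delta^{*}_{kl} - \tfrac{1}{2}\alpha_{ij}\alpha^{*}_{ij} - \tfrac{1}{2}\delta_{kl}\delta^{*}_{kl}$. Using $E_{\alpha_{ij}}^{-1} = E_{-\alpha_{ij}}$ and likewise for $\delta_{kl}$, the analogous expansion of $E_{\alpha_{ij}}^{-1}E_{\delta_{kl}}^{-1}$ differs only in the signs of the degree-one terms: it equals $I - L + M$, since each quadratic summand picks up two sign flips. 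Multiplying these two words and invoking the type-based observations $LM = ML = M^{2} = 0$, one obtains $(I + L + M)(I - L + M) = I + 2M - L^{2}$.

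It remains to compute $L^{2}$. The same vanishing rules leave only four nonzero summands, $L^{2} = -\alpha_{ij}\alpha^{*}_{ij} - \alpha_{ij}\delta^{*}_{kl} - \delta_{kl}\alpha^{*}_{ij} - \delta_{kl}\delta^{*}_{kl}$, and subtracting from $2M$ produces the clean cancellation $2M - L^{2} = \delta_{kl}\alpha^{*}_{ij} - \alpha_{ij}\delta^{*}_{kl}$, which is exactly the coordinate-free formula claimed. Evaluating the two surviving terms on $(z,x,f)$ directly from the definitions in Section~\ref{prelim} then yields the explicit coordinate expression. Finally, when $i = k$ both residual summands become scalar multiples of $x_{i}$, and the coefficients $\langle f, x_i \rangle \langle t_{kl}, w_{ij}\rangle$ and $\langle f, x_k \rangle \langle w_{ij}, t_{kl}\rangle$ coincide by symmetry of the bilinear form on $Q$, so the commutator collapses to the identity. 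The only real obstacle is the bookkeeping in the initial expansion and in the final product: nominally one has sixteen and then thirty-two terms to consider, but the type-matching principle eliminates all but the few listed. One must nevertheless track signs carefully, which is why it pays to package the surviving terms into the compact pieces $L$ and $M$ before multiplying.
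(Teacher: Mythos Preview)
Your argument is correct. The underlying idea is the same as the paper's: expand the product using the coordinate-free formula $E_{\theta} = I + \theta - \theta^{*} - \tfrac{1}{2}\theta\theta^{*}$ and systematically discard all compositions whose target-source types do not match. The paper, however, applies the four factors \emph{sequentially}, simplifying after each multiplication, and then repeats the computation in explicit coordinates; you instead group the commutator as $(E_{\alpha_{ij}}E_{\delta_{kl}})(E_{\alpha_{ij}}^{-1}E_{\delta_{kl}}^{-1})$, recognize each half as $I \pm L + M$, and exploit $LM = ML = M^{2} = 0$ to reduce the whole thing to $I + 2M - L^{2}$ in one stroke. This packaging is a genuinely cleaner bookkeeping device---it isolates the sign structure so that the $\tfrac{1}{2}\theta\theta^{*}$ terms cancel automatically---whereas the paper's sequential expansion makes the cancellations visible one layer at a time and so generalizes more transparently to the longer commutators treated later in the article, where no such symmetric $(I+L+M)(I-L+M)$ factorization is available.
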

\begin{proof}
For $\alpha, \delta \in \hom_A(Q,P)$ and for any $i,j,k,l$ with $1 \leq i,k \leq m$ and
$1 \leq j,l \leq n$, using the coordinate-free definition of the elementary generators, we have
     \begin{align*}
      \Bigl[ E_{\alpha_{ij}},& E_{\delta_{kl}} \Bigr](z,x,f)\\
             &=E_{\alpha_{ij}} E_{\delta_{kl}}E^{-1}_{\alpha_{ij}} E^{-1}_{\delta_{kl}}(z,x,f)\\
             &=E_{\alpha_{ij}} E_{\delta_{kl}}E^{-1}_{\alpha_{ij}}\Bigl(\Bigl( I - \delta_{kl} + \delta_{kl}^* - \frac{1}{2} \delta_{kl}\delta_{kl}^* \Bigr)(z,x,f)\Bigr)\\
             &= E_{\alpha_{ij}} E_{\delta_{kl}}
             \Bigl(\Bigl( I - \delta_{kl} + \delta_{kl}^* - \frac{1}{2} \delta_{kl}\delta_{kl}^* - \alpha_{ij} + \alpha_{ij}^* - \frac{1}{2} \alpha_{ij}\alpha_{ij}^* - \alpha_{ij} \delta_{kl}^*\Bigr)(z,x,f)\Bigr)\\
             &= E_{\alpha_{ij}}\Bigl(\Bigl( I - \alpha_{ij} + \alpha_{ij}^* - \frac{1}{2} \alpha_{ij} \alpha_{ij}^* - \alpha_{ij} \delta_{kl}^* + \delta_{kl} \alpha_{ij}^*\Bigr)(z,x,f) \Bigr)\\
             &= \Bigl( I - \alpha_{ij} \delta_{kl}^* + \delta_{kl} \alpha_{ij}^* \Bigr)(z,x,f).
       \end{align*}
     Using coordinates, we may compute the above commutator as
          \begin{align*}
      \Bigl[ E_{\alpha_{ij}},& E_{\delta_{kl}} \Bigr](z,x,f)\\
             &= E_{\alpha_{ij}} E_{\delta_{kl}}E^{-1}_{\alpha_{ij}}
            \Bigl(z + \langle f,x_k \rangle t_{kl} , \; x-\langle t_{kl},z\rangle x_k
            -\langle f,x_k \rangle q(t_{kl})x_k ,\; f \Bigr)\\
            &= E_{\alpha_{ij}} E_{\delta_{kl}} \Bigl(z+ \langle f , x_i\rangle w_{ij}
              +\langle f,x_k \rangle t_{kl},\;
              x-\Bigl\{
              \langle w_{ij}, z \rangle
              + \langle f , x_i \rangle q\left(w_{ij}\right) \Bigr.\Bigr.\\
            &\hspace{1cm}\Bigl.\Bigl. +\langle f,x_k \rangle \langle w_{ij},t_{kl} \rangle \Bigr\} x_i
              -\Bigl\{\langle t_{kl},z\rangle + \langle f,x_k \rangle q(t_{kl})\Bigr\}x_k,\; f \Bigr)\\
                &= E_{\alpha_{ij}} \Bigl(\,z+\langle f,x_i \rangle w_{ij},\;x-
                \Bigl\{\langle w_{ij},z\rangle +  q(w_{ij})\langle f , x_i \rangle  +\langle f,x_k \rangle
                \langle w_{ij},t_{kl} \rangle \Bigr\}x_i \Bigr.\Bigr.\\
                &\hspace{1cm}\Bigl.\Bigl.+ \langle f,x_i \rangle \langle t_{kl},w_{ij} \rangle x_k , f \Bigr)\\
             &= \Bigl(\, z,\,x+\langle f,x_i \rangle \langle t_{kl},w_{ij} \rangle x_k - \langle f,x_k \rangle \langle w_{ij}, t_{kl}\rangle x_i,\, f \Bigr).
     \end{align*}
       If $i = k$, then we have
     \[
      \delta_{kl} \alpha^{\ast}_{ij}(z,x,f)   = \Bigl(0,\langle f,x_i \rangle \langle t_{il},w_{ij} \rangle x_i,0 \Bigr) = \alpha_{ij} \delta^{\ast}_{kl}(z,x,f).
    \]
    Hence $\Bigl[ E_{\alpha_{ij}}, E_{\delta_{il}} \Bigr] = I.$
\end{proof}

As a consequence of this lemma, we have the following commutator relations.
\begin{cor}\label{c01}
        For any $i,j,k,l$ with $1 \leq i,k \leq m$, $1 \leq j,l \leq n$ and for $a,b,c,d \in A$ with  $ab=cd$, the following equation holds.
          \[
            \Bigl[E_{a\alpha_{ij}}, E_{b\delta_{kl}}\Bigr]
              = \Bigl[E_{c\alpha_{ij}}, E_{d\delta_{kl}}\Bigr].
           \]
\end{cor}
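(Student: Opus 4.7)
My plan is to apply Lemma~\ref{l01} to both sides, and then use bilinearity of $\langle\cdot,\cdot\rangle$ together with the hypothesis $ab = cd$ to conclude.

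First I would observe how the data attached to the elementary generators scale. Since $(a\alpha)^{\ast} = a\alpha^{\ast}$, replacing $\alpha$ by $a\alpha$ turns $w_i = \alpha^{\ast}(f_i)$ into $aw_i$, and consequently $w_{ij} = \eta_j \circ p_j(w_i)$ becomes $aw_{ij}$; note also that $(a\alpha)_{ij} = a\alpha_{ij}$ from the definition. Similarly, passing from $\delta$ to $b\delta$ converts $t_{kl}$ into $bt_{kl}$. Substituting these into the formula supplied by Lemma~\ref{l01} and pulling the scalars out of each pairing by bilinearity yields
\begin{equation*}
\Bigl[E_{a\alpha_{ij}}, E_{b\delta_{kl}}\Bigr](z,x,f) = \Bigl(z,\; x + ab\,\langle f, x_i\rangle \langle t_{kl}, w_{ij}\rangle x_k - ab\,\langle f, x_k\rangle \langle w_{ij}, t_{kl}\rangle x_i,\; f\Bigr).
\end{equation*}
The same calculation performed on $(c\alpha, d\delta)$ produces the identical expression with $cd$ in place of $ab$. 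The hypothesis $ab = cd$ then forces the two commutators to agree.

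I do not anticipate any genuine obstacle. The content of the corollary is simply that the commutator formula of Lemma~\ref{l01} is bilinear in the pair $(w_{ij}, t_{kl})$, so that rescaling $\alpha$ by $a$ and $\delta$ by $b$ alters the output only through the product $ab$.
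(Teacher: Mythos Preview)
Your proof is correct and follows essentially the same approach as the paper: apply Lemma~\ref{l01} and observe that the resulting commutator expression depends on the scalars only through the product $ab$. The only cosmetic difference is that the paper invokes the coordinate-free form $I - \alpha_{ij}\delta_{kl}^* + \delta_{kl}\alpha_{ij}^*$ from Lemma~\ref{l01} (so that $(a\alpha_{ij})(b\delta_{kl})^* = ab\,\alpha_{ij}\delta_{kl}^*$ directly), whereas you use the coordinate formula and track the scaling through $w_{ij}\mapsto aw_{ij}$, $t_{kl}\mapsto bt_{kl}$.
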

\begin{proof}
 For $\alpha,\delta \in \hom_A(Q,P)$ and for any $i,j,k,l$ with $1 \leq i,k \leq m$, $1 \leq j,l \leq n$ and $a,b,c,d \in A$ with $ab = cd$, we have
          \begin{align*}
            \Bigl[E_{a\alpha_{ij}}, E_{b\delta_{kl}}\Bigr]
             =\;&I - ab \alpha_{ij} \delta_{kl}^* + ab\delta_{kl} \alpha_{ij}^*  \quad\quad\textnormal{ (by Lemma~\ref{l01})}\\
            =\;& I - cd\alpha_{ij} \delta_{kl}^* + cd \delta_{kl} \alpha_{ij}^* = \Bigl[E_{c\alpha_{ij}}, E_{d\delta_{kl}}\Bigr].\qedhere
         \end{align*}
\end{proof}
{\it Since we will be using similar calculations to find the commutators 
in the rest of the article, we will give only the final expression for 
the commutators.}

We now compute the `mixed commutator' of elementary generators
corresponding to elements of $\hom_A(Q,P)$ and $\hom_A(Q,P^*)$.
The expression for the commutator as given in the proof of the 
lemma below may appear complicated and we need only its special
case $i \neq k$. This special case can be deduced after obtaining 
the general expression and specializing it.

\begin{lem}\label{l02}
      Let $\alpha \in \hom_A(Q,P)$ and $\beta \in \hom_A(Q,P^*)$. Then, for $i,j,k,l$ with $1 \leq i,k \leq m$
      and $1 \leq j,l \leq n$ with $i \neq k$,
      \begin{align*}
          \Bigl[E_{\alpha_{ij}},E_{\beta_{kl}}^{\ast}\Bigr](z,x,f)
            =\;& \Bigl(I-\alpha_{ij} \beta^{\ast}_{kl}+ \beta_{kl}\alpha^{\ast}_{ij}\Bigr)(z,x,f)\\
            =\;& \Bigl(\;z,\; x-\langle x,f_k\rangle \langle w_{ij},v_{kl}\rangle x_i,\; f+\langle f,x_i\rangle          \langle v_{kl},w_{ij}\rangle f_k\;\Bigr).
      \end{align*}
\end{lem}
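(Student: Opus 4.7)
The plan is to follow the same two-step pattern as the proof of Lemma~\ref{l01}: first expand the word $E_{\alpha_{ij}}E_{\beta_{kl}}^{*}(E_{\alpha_{ij}})^{-1}(E_{\beta_{kl}}^{*})^{-1}$ using the coordinate-free form $E_{\theta}=I+\theta-\theta^{*}-\tfrac12\theta\theta^{*}$, and then verify the resulting expression by the explicit coordinate formulas. The reason the computation collapses so cleanly is that, under the hypothesis $i\neq k$, almost every bilinear composite between an $\alpha$-map and a $\beta$-map is zero.

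As a preliminary step I would record the following list of vanishing composites, read off directly from the coordinate descriptions of the eight linear maps. The universal relations are
\[ \alpha_{ij}\alpha_{ij}=\alpha_{ij}^{*}\alpha_{ij}=\alpha_{ij}^{*}\beta_{kl}^{*}=\beta_{kl}^{*}\alpha_{ij}^{*}=\beta_{kl}\beta_{kl}=\beta_{kl}^{*}\beta_{kl}=\beta_{kl}^{*}\beta_{kl}^{*}=\alpha_{ij}\beta_{kl}=\beta_{kl}\alpha_{ij}=0, \]
and two further relations hold precisely because $i\neq k$, namely $\alpha_{ij}^{*}\beta_{kl}=0$ and $\beta_{kl}^{*}\alpha_{ij}=0$, each of which accumulates an internal factor $\langle f_k,x_i\rangle=\langle x_i,f_k\rangle=\delta_{ki}$. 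The only cross composites that survive are therefore $\alpha_{ij}\beta_{kl}^{*}$ and $\beta_{kl}\alpha_{ij}^{*}$, and these will account for the two nontrivial terms in the final answer.

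Multiplying out $(E_{\alpha_{ij}})^{-1}(E_{\beta_{kl}}^{*})^{-1}$ and discarding every composite from the above list leaves
\[ (E_{\alpha_{ij}})^{-1}(E_{\beta_{kl}}^{*})^{-1}=I-\alpha_{ij}+\alpha_{ij}^{*}-\tfrac12\alpha_{ij}\alpha_{ij}^{*}-\beta_{kl}+\beta_{kl}^{*}-\tfrac12\beta_{kl}\beta_{kl}^{*}-\alpha_{ij}\beta_{kl}^{*}. \]
Left multiplication by $E_{\beta_{kl}}^{*}$ then cancels the linear and quadratic $\beta$-only summands and contributes the single new cross term $\beta_{kl}\alpha_{ij}^{*}$, yielding $(E_{\alpha_{ij}})^{-1}-\alpha_{ij}\beta_{kl}^{*}+\beta_{kl}\alpha_{ij}^{*}$. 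A final left multiplication by $E_{\alpha_{ij}}$ cancels the $(E_{\alpha_{ij}})^{-1}$ factor and leaves the two surviving cross terms untouched, since every further composite of a summand of $E_{\alpha_{ij}}$ with $\alpha_{ij}\beta_{kl}^{*}$ or $\beta_{kl}\alpha_{ij}^{*}$ is zero by the list above. This gives $[E_{\alpha_{ij}},E_{\beta_{kl}}^{*}]=I-\alpha_{ij}\beta_{kl}^{*}+\beta_{kl}\alpha_{ij}^{*}$.

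The coordinate form now follows by direct substitution: one checks that $\alpha_{ij}\beta_{kl}^{*}(z,x,f)=(0,\langle x,f_k\rangle\langle w_{ij},v_{kl}\rangle x_i,0)$ and $\beta_{kl}\alpha_{ij}^{*}(z,x,f)=(0,0,\langle f,x_i\rangle\langle v_{kl},w_{ij}\rangle f_k)$, and these combine to the stated formula. The main obstacle is not conceptual but bookkeeping: one must compile the list of vanishing composites in advance and invoke it uniformly at each of the three successive multiplications, so that none of the cubic or quartic remainders is missed. Without the assumption $i\neq k$ the extra survivors $\alpha_{ij}^{*}\beta_{kl}$ and $\beta_{kl}^{*}\alpha_{ij}$ reappear and reinstate several further terms, which is consistent with the author's remark that the full expression behind this special case is substantially more intricate.
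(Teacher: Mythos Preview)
Your argument is correct. The list of vanishing composites is accurate (each follows from the domain/image descriptions of $\alpha_{ij},\alpha_{ij}^*,\beta_{kl},\beta_{kl}^*$), the two additional vanishings $\alpha_{ij}^*\beta_{kl}=0$ and $\beta_{kl}^*\alpha_{ij}=0$ are exactly what the hypothesis $i\neq k$ buys, and the three successive multiplications collapse as you describe; in particular the final step $E_{\alpha_{ij}}\cdot\alpha_{ij}\beta_{kl}^*=\alpha_{ij}\beta_{kl}^*$ and $E_{\alpha_{ij}}\cdot\beta_{kl}\alpha_{ij}^*=\beta_{kl}\alpha_{ij}^*$ is justified by the same list.

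Your route differs from the paper's, however. The paper deliberately does \emph{not} assume $i\neq k$ at the outset: it first writes down the full coordinate-free expansion of $[E_{\alpha_{ij}},E_{\beta_{kl}}^*]$ for arbitrary $i,k$ (a long expression in which the composites $\alpha_{ij}^*\beta_{kl}$ and $\beta_{kl}^*\alpha_{ij}$ and their iterates are still present), then gives the matching coordinate formula, and only at the end specializes by setting $\langle x_i,f_k\rangle=0$. You instead impose $i\neq k$ from the start, compile the enlarged vanishing list, and prune terms at each multiplication. Your approach is shorter and cleaner for the statement as written; the paper's approach has the advantage that it records the general commutator (needed, as the authors note, to see that the simple formula really is a specialization and to understand what happens when $i=k$), at the cost of carrying many more terms through the computation.
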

\begin{proof}
 For $\alpha \in \hom_A(Q,P)$, $\beta \in \hom_A(Q,P^*)$ and for any $i,j,k,l$
 with $1 \leq i,k \leq m$ and $1 \leq j,l \leq n$ with $i \neq k$,
 we have the coordinate-free expression
 \begin{align*}
      \Bigl[ E_{\alpha_{ij}},& E_{\beta_{kl}}^* \Bigr](z,x,f)\\
            &= \Bigl(I + \beta_{kl}^* \alpha_{ij} + \frac{1}{2}\beta_{kl}^*\alpha_{ij} \alpha_{ij}^* +
      \beta_{kl}^* \alpha_{ij} \beta_{kl}^* - \frac{1}{2}\beta_{kl}^*\alpha_{ij}\alpha_{ij}^*\beta_{kl}
      - \frac{1}{4} \beta_{kl}^*\alpha_{ij}\alpha_{ij}^*\beta_{kl} \beta_{kl}^*\Bigr.\\
      &\hspace{1cm} - \alpha_{ij}^*\beta_{kl}  -  \frac{1}{2} \alpha_{ij}^* \beta_{kl} \beta_{kl}^* - \alpha_{ij}^* \beta_{kl}\alpha_{ij}^* + \alpha_{ij}^* \beta_{kl}\alpha_{ij}^*\beta_{kl} - \frac{1}{2} \alpha_{ij}^* \beta_{kl} \beta_{kl}^*\alpha_{ij} +\beta_{kl}\alpha_{ij}^*\\
      &\hspace{1cm} - \frac{1}{4}  \alpha_{ij}^* \beta_{kl} \beta_{kl}^*\alpha_{ij} \alpha_{ij}^* + \frac{1}{4}  \alpha_{ij}^* \beta_{kl} \beta_{kl}^*\alpha_{ij} \alpha_{ij}^* \beta_{kl} +\frac{1}{8} \alpha_{ij}^* \beta_{kl} \beta_{kl}^*\alpha_{ij} \alpha_{ij}^* \beta_{kl} \beta_{kl}^* + \frac{1}{2}\beta_{kl}\beta_{kl}^*\alpha_{ij}\\
      &\hspace{1cm} - \alpha_{ij} \beta_{kl}^* + \frac{1}{2} \alpha_{ij} \alpha_{ij}^*\beta_{kl}  + \frac{1}{4} \alpha_{ij} \alpha_{ij}^*\beta_{kl} \beta_{kl}^* - \alpha_{ij} \alpha_{ij}^*\beta_{kl} - \frac{1}{2} \alpha_{ij} \alpha_{ij}^*\beta_{kl}\beta_{kl}^* + \frac{1}{4}\beta_{kl}\beta_{kl}^*\alpha_{ij}\alpha_{ij}^* \\
      &\hspace{1cm} + \alpha_{ij} \beta_{kl}^*\alpha_{ij}  + \frac{1}{2} \alpha_{ij} \beta_{kl}^*\alpha_{ij}\alpha_{ij}^* + \alpha_{ij} \beta_{kl}^*\alpha_{ij} \beta_{kl}^* -\frac{1}{2} \alpha_{ij} \beta_{kl}^*\alpha_{ij}\alpha_{ij}^* \beta_{kl}- \beta_{kl}\alpha_{ij}^* \beta_{kl}\\
      &\hspace{1cm} -\frac{1}{4}\alpha_{ij} \beta_{kl}^*\alpha_{ij}\alpha_{ij}^* \beta_{kl}\beta_{kl}^* - \frac{1}{4}\beta_{kl}\beta_{kl}^*\alpha_{ij}\alpha_{ij}^* \beta_{kl}  - \frac{1}{8} \beta_{kl}\beta_{kl}^*\alpha_{ij}\alpha_{ij}^* \beta_{kl}\beta_{kl}^* \Bigr)(z,x,f).
     \end{align*}
Now using coordinates, we have
      \begin{align*}
          \Bigl[ E_{\alpha_{ij}},& E_{\beta_{kl}}^* \Bigr](z,x,f)\\
          &= \Bigl(z +  \Bigl\{ \langle w_{ij},z \rangle + \langle x,f_k \rangle
          \langle w_{ij},v_{kl} \rangle  +\langle f,x_i \rangle
           q(w_{ij})-\langle x,f_k \rangle \langle f_k,x_i \rangle q(v_{kl})q(w_{ij})\Bigr.\\
          &\Bigl.\hspace{1cm}-\langle v_{kl}, z\rangle \langle f_k,x_i \rangle q(w_{ij})
           \Bigr\}\langle x_i,f_k\rangle v_{kl}
          -\Bigl\{ \langle v_{kl},z \rangle + \langle x,f_k \rangle  q(v_{kl})
          + \langle x_i,f \rangle \langle v_{kl},w_{ij}\rangle  \Bigr.\\
          &\hspace{.9cm}-\langle v_{kl},z \rangle \langle v_{kl},w_{ij} \rangle \langle x_i,f_k \rangle
          + \langle x_i,f \rangle\langle x_i,f_k \rangle q(v_{kl})q(w_{ij})+ \langle w_{ij},z \rangle\langle x_i,f_k \rangle q(v_{kl})
          \\
          &\hspace{.9cm}- \langle v_{kl},z \rangle\langle x_i,f_k \rangle^2 q(v_{kl})q(w_{ij})  
         \Bigl.- \langle x,f_k \rangle\langle x_i,f_k \rangle^2 q(v_{kl})q(w_{ij})\Bigr\}\langle x_i,f_k \rangle  w_{ij},\\
          & \hspace{1cm} x + \Bigl\{\langle w_{ij},z \rangle \langle v_{kl}, w_{ij} \rangle \langle f_k,x_i \rangle -\langle x,f_k \rangle \langle w_{ij}, v_{kl} \rangle
            -\langle v_{kl},z \rangle\langle x_i,f_k \rangle q(w_{ij})\Bigr. \\
          & \hspace{1cm} -\langle x,f_k \rangle\langle x_i,f_k \rangle q(w_{ij})q(v_{kl})-\langle x,f_k \rangle \langle v_{kl}, w_{ij} \rangle \langle f_k,x_i \rangle^2q(v_{kl})q(w_{ij}) 
          \\
          & \hspace{1cm}+\langle x,f_k \rangle  \langle v_{kl}, w_{ij} \rangle^2 \rangle \langle x_i,f_k\rangle 
          - \langle w_{ij}, z\rangle\langle x_i,f_k \rangle^2 q(v_{kl})q(w_{ij})\\
          &\hspace{1cm}- \langle x_i,f \rangle \langle x_i,f_k \rangle^2 q(v_{kl})q(w_{ij})^2        
          + \langle v_{kl}, z \rangle\langle x_i,f_k \rangle^3 q(v_{kl})q(w_{ij})^2 \\
          &\hspace{1cm}\Bigl.
          + \langle x,f_k\rangle \langle x_i,f_k  \rangle^3 q(v_{kl})q(w_{ij}))^2
         \Bigr\}x_i,\;
	    f+\Bigl\{\langle x_i,f \rangle \langle x_i,f_k \rangle q(v_{kl})q(w_{ij})\Bigr.\\
          &\hspace{1cm}- \langle v_{kl}, z \rangle\langle x_i,f_k \rangle^2 q(v_{kl})q(w_{ij})
           +\langle f,x_i \rangle \langle v_{kl}, w_{ij} \rangle
          + \langle w_{ij}, z\rangle\langle x_i,f_k \rangle q(v_{kl}) \\
         &\hspace{1cm} \Bigl. \Bigl. - \langle x,f_k\rangle \langle x_i,f_k  \rangle^2
              q(v_{kl})q(w_{ij})- \langle v_{kl}, z \rangle \langle v_{kl}, w_{ij} \rangle \langle x_i,f_k \rangle \Bigr\}f_k \Bigr ).
 \end{align*}
  In the special case when $i\neq k$, using the fact that
  $\langle x_i,f_k \rangle  =0$, we obtain
  \begin{equation*}
    \Bigl[ E_{\alpha_{ij}},E_{\beta_{kl}}^{\ast} \Bigr](z,x,f)
                = \Bigl(z,\; x-\langle x,f_k \rangle  \langle w_{ij},v_{kl} \rangle  x_i,
                \; f+ \langle f,x_i \rangle  \langle v_{kl},w_{ij} \rangle  f_k\Bigr).
  \end{equation*}
  \begin{align*}
     \textnormal{Now } \alpha_{ij} \beta^{\ast}_{kl}(z,x,f) =\;& \Bigl(0,\langle x,f_k \rangle  \langle w_{ij},v_{kl} \rangle  x_i,0\Bigr), & \beta_{kl} \alpha^{\ast}_{ij}(z,x,f) =\;& \Bigl(0,0,\langle f,x_i \rangle  \langle v_{kl},w_{ij} \rangle  f_k\Bigr).
  \end{align*}
\noindent Hence if $i \neq k$, then
  \begin{align*}
  \Bigl[E_{\alpha_{ij}},E_{\beta_{kl}}^{\ast}\Bigr](z,x,f)
  &=\Bigl(\;z,\; x-\langle x,f_k \rangle  \langle w_{ij},v_{kl} \rangle  x_i,\; f+ \langle f,x_i \rangle  \langle v_{kl},w_{ij} \rangle  f_k\Bigr)\\
  &=\Bigl(I-\alpha_{ij} \beta^{\ast}_{kl}+ \beta_{kl}\alpha^{\ast}_{ij}\Bigr)(z,x,f).\qedhere
\end{align*}
 \end{proof}
The following corollary lists the resultant commutator relations from the above lemma.
\begin{cor}\label{c02}
  For any $i,j,k,l$ with $1 \leq i,k \leq m$, $1 \leq j,l \leq n$, $i \neq k$ and for $a,b,c,d \in A$ with  $ab=cd$, the following equation holds.
 \[
      \Bigl[ E_{a\alpha_{ij}},E_{b\beta_{kl}}^{\ast} \Bigr] = \Bigl[ E_{c\alpha_{ij}},E_{d\beta_{kl}}^{\ast}\Bigr].
  \]
 \end{cor}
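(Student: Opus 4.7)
The plan is to apply Lemma~\ref{l02} twice and exploit $A$-linearity of the $(\,\cdot\,)^*$ operation. Since $\theta^* = d_{B_q}^{-1}\circ \theta^t$ (or the analogous expression with $\varepsilon$) is $A$-linear in $\theta$, we have $(a\alpha_{ij})^* = a\,\alpha_{ij}^*$ and $(b\beta_{kl})^* = b\,\beta_{kl}^*$. Moreover, the hypothesis $i\neq k$ passes unchanged to the scaled morphisms $a\alpha \in \hom_A(Q,P)$ and $b\beta \in \hom_A(Q,P^*)$, since the indices that determine when the pairing $\langle x_i,f_k\rangle$ vanishes are the same.

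First, I would invoke the special case $i\neq k$ of Lemma~\ref{l02} with $\alpha$ replaced by $a\alpha$ and $\beta$ replaced by $b\beta$ to obtain
\begin{equation*}
\Bigl[E_{a\alpha_{ij}},E_{b\beta_{kl}}^{\ast}\Bigr]
= I - (a\alpha_{ij})(b\beta_{kl})^{\ast} + (b\beta_{kl})(a\alpha_{ij})^{\ast}
= I - ab\,\alpha_{ij}\beta_{kl}^{\ast} + ab\,\beta_{kl}\alpha_{ij}^{\ast}.
\end{equation*}
Next, I would apply the same formula with $c,d$ in place of $a,b$, yielding
\begin{equation*}
\Bigl[E_{c\alpha_{ij}},E_{d\beta_{kl}}^{\ast}\Bigr]
= I - cd\,\alpha_{ij}\beta_{kl}^{\ast} + cd\,\beta_{kl}\alpha_{ij}^{\ast}.
\end{equation*}
Since $ab = cd$ by hypothesis, the right-hand sides coincide, completing the proof. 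This mirrors precisely the pattern used in Corollary~\ref{c01}.

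There is essentially no substantive obstacle: the result is a direct scalar-factoring corollary of Lemma~\ref{l02}. The only point requiring (brief) justification is that the $(\,\cdot\,)^*$ operation commutes with scaling by ring elements, which is immediate from its definition as a composition of $A$-linear maps. One could alternatively give a coordinate proof by substituting $aw_{ij}$ and $bv_{kl}$ into the explicit formula in Lemma~\ref{l02}, observing that each of the bilinear pairings $\langle w_{ij},v_{kl}\rangle$ and $\langle v_{kl},w_{ij}\rangle$ picks up exactly one factor of $a$ and one of $b$, so the entire correction term acquires the scalar $ab$; but the coordinate-free argument above is cleaner.
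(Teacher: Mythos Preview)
Your proof is correct and follows exactly the pattern the paper uses for Corollary~\ref{c01}: apply Lemma~\ref{l02} to the scaled maps, factor the scalars through the $A$-linear operation $(\,\cdot\,)^*$, and invoke $ab=cd$. The paper does not spell out a separate proof of Corollary~\ref{c02}, treating it as an immediate consequence of Lemma~\ref{l02} in the same way.
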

The lemma below computes the commutator of elementary generators
corresponding to two elements of $\hom_A(Q,P^*)$.
 \begin{rem}\label{r01}
 For any $i,j,k,l$ with $1 \leq i,k \leq m$, $1 \leq j,l \leq n$
 and $i \neq k$, the commutator $\Bigl[ E_{\alpha_{ij}},E_{\beta_{kl}}^{\ast}\Bigr]^{-1}$ is given by
 \begin{align*}
 \Bigl[ E_{\alpha_{ij}},E_{\beta_{kl}}^{\ast}\Bigr]^{-1}(z,x,f)
 =\;&\Bigl(z,\; x+\langle x,f_k \rangle  \langle w_{ij},v_{kl} \rangle  x_i,\; f- \langle f,x_i \rangle  \langle v_{kl},w_{ij} \rangle  f_k\Bigr)\\
 =\;& \Bigl(I+\alpha_{ij} \beta^{\ast}_{kl}- \beta_{kl}\alpha^{\ast}_{ij}\Bigr)(z,x,f)\\
 =\;& \Bigl[ E_{\beta_{kl}}^{\ast}, E_{\alpha_{ij}}\Bigr](z,x,f).
 \end{align*}
 \end{rem}
 \begin{lem}\label{l03}
Let $\beta, \gamma \in \hom_A(Q,P^*)$. Then, for $i,j,k,l$ with $1 \leq i,k \leq m$ and $1 \leq j,l \leq n$, the commutator $[E_{\beta_{ij}}^*, E_{\gamma_{kl}}^*]$ is given by
\begin{align*}
 \Bigl[E_{\beta_{ij}}^*, E_{\gamma_{kl}}^* \Bigr](z,x,f) =\;& \Bigl( I + \gamma_{kl} \beta^{\ast}_{ij} - \beta_{ij} \gamma^{\ast}_{kl} \Bigr)(z,x,f)\\
 =\;& \Bigl(z,\; x,\; f + \langle x,f_i \rangle \langle c_{kl},v_{ij} \rangle f_k - \langle x,f_k \rangle \langle v_{ij}, c_{kl}\rangle f_i  \Bigr).
\end{align*}
In particular, if $i=k$, then $[E_{\beta_{ij}}^*, E_{\gamma_{kl}}^*] = I.$
\end{lem}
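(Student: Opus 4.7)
The plan is to mirror the proof of Lemma~\ref{l01}, which is the exact dual statement with the roles of $P$ and $P^*$ (and of $\alpha,\delta$ and $\beta,\gamma$) swapped. I would substitute the coordinate-free expressions
\[
E_{\beta_{ij}}^* = I + \beta_{ij} - \beta_{ij}^* - \tfrac{1}{2}\beta_{ij}\beta_{ij}^*, \qquad
(E_{\beta_{ij}}^*)^{-1} = I - \beta_{ij} + \beta_{ij}^* - \tfrac{1}{2}\beta_{ij}\beta_{ij}^*,
\]
together with their $\gamma_{kl}$-analogues, into the fourfold product $E_{\beta_{ij}}^* E_{\gamma_{kl}}^* (E_{\beta_{ij}}^*)^{-1} (E_{\gamma_{kl}}^*)^{-1}$ and compose one factor at a time, exactly as in the proof of Lemma~\ref{l01}.

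The decisive simplification is that $\beta_{ij}$ and $\gamma_{kl}$ both land in the $P^*$-summand of $Q\!\perp\! H(P)$ while depending only on the $Q$-coordinate of the input, and their adjoints $\beta_{ij}^*, \gamma_{kl}^*$ both land in the $Q$-summand while depending only on the $P$-coordinate. This yields the vanishings
\[
\beta_{ij}\gamma_{kl} = \gamma_{kl}\beta_{ij} = 0, \quad \beta_{ij}^*\gamma_{kl}^* = \gamma_{kl}^*\beta_{ij}^* = 0, \quad \beta_{ij}^*\beta_{ij} = \gamma_{kl}^*\gamma_{kl} = 0,
\]
and moreover the self-compositions $\beta_{ij}\beta_{ij}^*$, $\gamma_{kl}\gamma_{kl}^*$ take values in $P^*$ and are therefore annihilated by any subsequent application of $\beta_{ij}, \gamma_{kl}, \beta_{ij}^*$, or $\gamma_{kl}^*$. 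Iterating these reductions while telescoping the obvious $E_\theta^*(E_\theta^*)^{-1}$ cancellations collapses the expansion down to $I + \gamma_{kl}\beta_{ij}^* - \beta_{ij}\gamma_{kl}^*$, the precise analogue of what arises in Lemma~\ref{l01}.

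To pass from this coordinate-free form to the stated coordinate formula, I would evaluate the two surviving compositions on a generic triple $(z,x,f)$:
\[
\gamma_{kl}\beta_{ij}^*(z,x,f) = \bigl(0,0,\langle x,f_i\rangle \langle c_{kl},v_{ij}\rangle f_k\bigr), \quad \beta_{ij}\gamma_{kl}^*(z,x,f) = \bigl(0,0,\langle x,f_k\rangle \langle v_{ij},c_{kl}\rangle f_i\bigr).
\]
For the degenerate case $i=k$, both contributions then lie in the $f_i$-component, and the symmetry $\langle c_{il},v_{ij}\rangle = \langle v_{ij},c_{il}\rangle$ of the bilinear form on $Q$ makes them coincide; the two summands cancel, giving $[E_{\beta_{ij}}^*, E_{\gamma_{il}}^*] = I$.

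The principal obstacle is purely combinatorial bookkeeping rather than anything conceptual: the fourfold product initially expands into many dozens of summands, and I must carefully track which vanish by the identities above and which telescope. Proceeding factor-by-factor and simplifying after each multiplication, exactly as in Lemma~\ref{l01}, keeps the computation tractable; the parallel with the $\hom_A(Q,P)$ case is close enough that the final answer could also be guessed by formal substitution, but the systematic expansion provides the required verification.
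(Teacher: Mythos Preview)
Your proposal is correct and follows essentially the same approach as the paper: after Lemma~\ref{l01} the paper announces that subsequent commutators are computed by the same coordinate-free expansion and only the final expressions are recorded, and its proof of Lemma~\ref{l03} does precisely that, arriving at $I - \beta_{ij}\gamma_{kl}^* + \gamma_{kl}\beta_{ij}^*$ and then specializing to $i=k$ via the symmetry of $\langle\,\cdot\,,\,\cdot\,\rangle$. Your explicit listing of the vanishing compositions makes the telescoping transparent, but the method is identical.
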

\begin{proof}
 For $\beta, \gamma \in \hom_A(Q,P^*)$ and for any $i,j,k,l$ with $1 \leq i,k \leq m$
 and $1 \leq j,l \leq n$, we have the coordinate-free expression
 \begin{align*}
      \Bigl[ E_{\beta_{ij}}^{\ast}, E_{\gamma_{kl}}^{\ast} \Bigr](z,x,f)
            =\;& E_{\beta_{ij}}^{\ast} E_{\gamma_{kl}}^{\ast} E_{\beta_{ij}}^{\ast \, -1}E_{\gamma_{kl}}^{\ast \, -1}(z,x,f)\\
            =\;& \Bigl( I - \beta_{ij}\gamma_{kl}^* + \gamma_{kl}\beta_{ij}^* \Bigr)(z,x,f).
       \end{align*}
Using coordinates, we have
\begin{align*}
\Bigl[ E_{\beta_{ij}}^*, E_{\gamma_{kl}}^*\Bigr]
=\;& \Bigl(z,\;x,\;f+\langle x,f_i \rangle \langle c_{kl},v_{ij} \rangle f_k - \langle x,f_k \rangle \langle v_{ij}, c_{kl}\rangle f_i\Bigr).
\end{align*}
If $i = k$, then
      \[
        \gamma_{kl}\beta_{ij}^*(z,x,f) = \Bigl(0,0,\langle x,f_i \rangle \langle c_{kl},v_{ij} \rangle f_k - \langle x,f_k \rangle \langle v_{ij}, c_{kl}\rangle f_i\Bigr) = \beta_{ij}\gamma_{kl}^*(z,x,f).
        \]
Hence $\Bigl[E_{\beta_{ij}}^*, E_{\gamma_{il}}^*\Bigr] = I.$
\end{proof}
Immediately, we deduce the following commutator relations.
\begin{cor}\label{c03}
    For any $i,j,k,l$ with $1 \leq i,k \leq m$, $1 \leq j,l \leq n$ and for $a,b,c,d \in A$ with  $ab=cd$, the following equation holds.
    \[\Bigl[E_{a\beta_{ij}}^*, E_{b\gamma_{kl}}^* \Bigr]= \Bigl[ E_{c\beta_{ij}}^*, E_{d\gamma_{kl}}^* \Bigr].\]
\end{cor}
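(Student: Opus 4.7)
The plan is to mimic the proof of Corollary~\ref{c01} essentially verbatim, since the structure of Lemma~\ref{l03} is formally identical to that of Lemma~\ref{l01}: the commutator is expressed as $I$ plus a term bilinear in the two morphisms involved. First I would observe that replacing $\beta_{ij}$ by $a\beta_{ij}$ has the effect of replacing $v_{ij}$ by $av_{ij}$ in the coordinate description, and similarly replacing $\gamma_{kl}$ by $b\gamma_{kl}$ replaces $c_{kl}$ by $bc_{kl}$. Since both $\beta_{ij}$ and $\beta_{ij}^\ast$ (respectively $\gamma_{kl}$ and $\gamma_{kl}^\ast$) are linear in their defining data, applying Lemma~\ref{l03} directly gives
\[
\Bigl[E_{a\beta_{ij}}^*, E_{b\gamma_{kl}}^*\Bigr] = I + ab\,\gamma_{kl}\beta_{ij}^{\ast} - ab\,\beta_{ij}\gamma_{kl}^{\ast}.
\]

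Next I would invoke the hypothesis $ab = cd$ to conclude
\[
I + ab\,\gamma_{kl}\beta_{ij}^{\ast} - ab\,\beta_{ij}\gamma_{kl}^{\ast}
  = I + cd\,\gamma_{kl}\beta_{ij}^{\ast} - cd\,\beta_{ij}\gamma_{kl}^{\ast}
  = \Bigl[E_{c\beta_{ij}}^*, E_{d\gamma_{kl}}^*\Bigr],
\]
where the last equality is just Lemma~\ref{l03} applied with the scalars $c,d$ in place of $a,b$. This proves the claim.

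There is essentially no obstacle: the entire content is that the commutator in Lemma~\ref{l03} depends on $(a,b)$ only through the product $ab$. The only tiny thing to verify is the bilinearity claim, namely that $(a\beta_{ij})^{\ast} = a\beta_{ij}^{\ast}$ and analogously for $\gamma$; but this is immediate from the coordinate-free definition $\theta^{\ast} = d_{B_q}^{-1}\circ \theta^t \circ \varepsilon$, which is $A$-linear in $\theta$. Thus the proof consists of a single two-line application of Lemma~\ref{l03}, paralleling Corollary~\ref{c01}.
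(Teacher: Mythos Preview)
Your proposal is correct and follows exactly the approach the paper takes: the paper gives no separate proof of this corollary, treating it as an immediate consequence of Lemma~\ref{l03} in the same way Corollary~\ref{c01} follows from Lemma~\ref{l01}. Your one-line application of Lemma~\ref{l03} together with the bilinearity observation is precisely what is intended.
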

 \begin{rem}
In the following sections, we will prove more complicated commutator relations of lengths $10$ and $16$; we will show how the indices may be specialized so that the commutator is non-trivial.
\end{rem}
\section{Triple Commutators}\label{triple}
In this section, we prove certain triple commutator relations among
the elementary generators of Roy's elementary orthogonal group. We
start with a commutator of length $10$ which involves a commutator of
elementary generators corresponding to two elements of $\hom_A(Q,P)$.
\begin{lem}\label{l04}
      Let $\alpha, \delta \in \hom_A(Q,P)$ and $\beta \in \hom_A(Q,P^*)$. Then, for $i,j,k,l,p,q$ with $1 \leq i,k,p \leq m$, $1 \leq j,l,q \leq n$ and $k \neq p$, the triple commutator
      $\left[ E_{\beta_{ij}}^*,\left[ E_{\alpha_{kl}},E_{\delta_{pq}} \right]\right]$ is given by
      \[
               \left[ E_{\beta_{ij}}^*,\left[ E_{\alpha_{kl}},E_{\delta_{pq}} \right]\right] =
                    \begin{cases}
                    \vspace{2mm}
                         E_{\lambda_{kj}}\left[ E_{\beta_{ij}}^*,E_\frac{\lambda_{kj}}{2} \right]&\textnormal{if}\quad i = p,\\
                         E_{\xi_{pj}}\left[ E_{\beta_{ij}}^*,E_\frac{\xi_{pj}}{2} \right]
                         &\textnormal{if}\quad i = k,\\
                         I & \textnormal{if}\quad  i \neq p \textnormal{ and } i \neq k,
                      \end{cases}
         \]
where $\lambda_{kj}\;=\;\alpha_{kl}\delta_{pq}^*\beta_{ij}$ and
 $\xi_{pj}\;=\;-\delta_{pq}\alpha_{kl}^*\beta_{ij}$.
\end{lem}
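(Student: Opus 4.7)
The plan is to reduce the triple commutator to a short coordinate computation by first applying Lemma \ref{l01}. Since $k\ne p$, that lemma gives
\[
\tau \;:=\; \bigl[E_{\alpha_{kl}},E_{\delta_{pq}}\bigr] \;=\; I + A, \qquad A \;:=\; \delta_{pq}\alpha_{kl}^{\ast}-\alpha_{kl}\delta_{pq}^{\ast},
\]
a transformation that fixes the $Q$- and $P^{\ast}$-summands and alters only the $P$-summand via a map $A$ that is linear in $f$ and reads $f$ only through the pairings $\langle f,x_k\rangle$ and $\langle f,x_p\rangle$. Since $A$ outputs into $P$ while requiring $P^{\ast}$ input, one has $A^{2}=0$ and $\tau^{-1}=I-A$. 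Writing $\sigma := E_{\beta_{ij}}^{\ast}$, the outer commutator unfolds as
\[
[\sigma,\tau] \;=\; \bigl(I+\sigma A\sigma^{-1}\bigr)\bigl(I-A\bigr),
\]
so that the entire computation reduces to determining $\sigma A\sigma^{-1}$.

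Using the explicit formula for $\sigma^{\pm 1}$, the $f$-component of $\sigma^{-1}(z,x,f)$ is $f-\langle v_{ij},z\rangle f_i-\langle x,f_i\rangle q(v_{ij})f_i$, so the only way $\sigma A\sigma^{-1}$ can differ from $A$ is through the pairings $\langle f_i,x_k\rangle$ and $\langle f_i,x_p\rangle$. This observation gives the case split directly. When $i\ne k$ and $i\ne p$ both pairings vanish, so $\sigma A\sigma^{-1}=A$ and $[\sigma,\tau]=I-A^{2}=I$. When $i=p$ (forcing $i\ne k$), the correction from $\sigma^{-1}$ feeds $\beta_{ij}(z)$ into the $f$-slot of $A$, so that $\sigma A\sigma^{-1}-A$ acquires exactly the composite $\lambda_{kj}=\alpha_{kl}\delta_{pq}^{\ast}\beta_{ij}$ in its $P$-component, its dual $-\lambda_{kj}^{\ast}$ in its $Q$-component (produced by the outer $\sigma$), and a quadratic tail involving $q(v_{ij})$ coming from the $-\langle x,f_i\rangle q(v_{ij})f_i$ term in $\sigma$.

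These three contributions package precisely into the single elementary transformation $E_{\lambda_{kj}}=I+\lambda_{kj}-\lambda_{kj}^{\ast}-\tfrac12\lambda_{kj}\lambda_{kj}^{\ast}$; since $i\ne k$, the residual terms sit in the ``clean'' regime of Lemma \ref{l02} (equivalently Remark \ref{r01}) for the pair $E_{\beta_{ij}}^{\ast}$ and $E_{\frac{\lambda_{kj}}{2}}$, and regroup exactly as $[E_{\beta_{ij}}^{\ast}, E_{\frac{\lambda_{kj}}{2}}]$. The symmetric case $i=k$ (forcing $i\ne p$) is handled identically with the roles of $\alpha_{kl}$ and $\delta_{pq}$ swapped; the sign in $\tau=I+\delta_{pq}\alpha_{kl}^{\ast}-\alpha_{kl}\delta_{pq}^{\ast}$ is exactly what forces the minus sign in $\xi_{pj}=-\delta_{pq}\alpha_{kl}^{\ast}\beta_{ij}$.

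The main obstacle is the bookkeeping of quadratic corrections. Every elementary generator carries the $-\tfrac12\theta\theta^{\ast}$ tail, and the factor of one-half in $E_{\frac{\lambda_{kj}}{2}}$ is the unique scalar for which the $q(v_{ij})$-term produced by $\sigma$ simultaneously matches the $-\tfrac12\lambda_{kj}\lambda_{kj}^{\ast}$ piece of $E_{\lambda_{kj}}$ and the quadratic term appearing in $[E_{\beta_{ij}}^{\ast}, E_{\frac{\lambda_{kj}}{2}}]$. Verifying this scalar match is most efficiently done by evaluating $E_{\lambda_{kj}}^{-1}[\sigma,\tau](z,x,f)$ coordinate-wise and comparing with the explicit expression for $[E_{\beta_{ij}}^{\ast}, E_{\frac{\lambda_{kj}}{2}}]$ coming from Lemma \ref{l02}.
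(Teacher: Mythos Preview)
Your approach is correct and follows essentially the same strategy as the paper: reduce the inner commutator via Lemma~\ref{l01} to $I+A$, expand the outer commutator, and match the result against $E_{\lambda_{kj}}\bigl[E_{\beta_{ij}}^*,E_{\lambda_{kj}/2}\bigr]$ (respectively the $\xi$-version) by a case split on the index $i$, invoking Lemma~\ref{l02}/Remark~\ref{r01} for the inner factor. Your observation that $A^{2}=0$ and the conjugation identity $[\sigma,\tau]=(I+\sigma A\sigma^{-1})(I-A)$ organize the bookkeeping more transparently than the paper's direct twelve-term expansion (Equation~\eqref{eqnt2}), but the underlying computation and the final coordinate verification you defer to are exactly what the paper carries out.
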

\begin{proof}
For $\alpha, \delta \in \hom_A(Q,P)$, $\beta \in \hom_A(Q,P^*)$ and for $i,j,k,l,p,q$
with $1 \leq i,k,p \leq m$, $1 \leq j,l,q \leq n$ and $k \neq p$, we have
\begin{align*}
      \left[ E_{\alpha_{kl}},E_{\delta_{pq}} \right](z,x,f)
      &= \Bigl( I + \delta_{pq} \alpha_{kl}^* - \alpha_{kl}\delta_{pq}^* \Bigr)(z,x,f)\\
      &= \Bigl(z,\,x+\langle f,x_k \rangle \langle t_{pq},w_{kl}\rangle x_p-\langle f,x_p \rangle\langle t_{pq},w_{kl}\rangle  x_k,\,f \Bigr).\\
          & \hspace{6cm}(\textnormal{by Lemma}\; \ref{l01})\\
      {\left[ E_{\alpha_{kl}},E_{\delta_{pq}} \right]}^{-1}(z,x,f)
      &= \left[ E_{\delta_{pq}},E_{\alpha_{kl}} \right](z,x,f)
      = \Bigl( I - \delta_{pq} \alpha_{kl}^* + \alpha_{kl}\delta_{pq}^* \Bigr)(z,x,f)\\
      &=\Bigl(z,\,x-\langle f,x_k \rangle \langle t_{pq},w_{kl}\rangle x_p + \langle f,x_p \rangle\langle               t_{pq},w_{kl}\rangle  x_k,\,f \Bigr).\\
      & \hspace{6cm}(\textnormal{by Lemma}\; \ref{l01})
     \end{align*}
\noindent Hence we get the coordinate-free expressions
\begin{align*}
    \left[ E_{\beta_{ij}}^*\right.,& \left[ E_{\alpha_{kl}},E_{\delta_{pq}}\right]\left.\right](z,x,f)\\
            &= E_{\beta_{ij}}^*\left[ E_{\alpha_{kl}},E_{\delta_{pq}}\right] E_{\beta_{ij}}^{*^{-1}}                    \left[ E_{\alpha_{kl}},E_{\delta_{pq}}\right]^{-1}(z,x,f)\\
            &= \Bigl( I + \beta_{ij}^* \alpha_{kl} \delta_{pq}^*  - \frac{1}{2} \alpha_{kl} \delta_{pq}^*\beta_{ij}\beta_{ij}^* \delta_{pq} \alpha_{kl}^*  + \frac{1}{2}\beta_{ij}\beta_{ij}^*          \alpha_{kl} \delta_{pq}^* +  \frac{1}{2} \alpha_{kl} \delta_{pq}^*\beta_{ij}\beta_{ij}^*\Bigr. \\
            & \hspace{1cm} 
                - \frac{1}{2} \delta_{pq} \alpha_{kl}^* \beta_{ij}\beta_{ij}^* \alpha_{kl} \delta_{pq}^*  +  \alpha_{kl} \delta_{pq}^*\beta_{ij}
                      + \frac{1}{2} \delta_{pq} \alpha_{kl}^* \beta_{ij}\beta_{ij}^* \delta_{pq}          \alpha_{kl}^* - \frac{1}{2}\beta_{ij}\beta_{ij}^*\delta_{pq} \alpha_{kl}^*  \\
             &\hspace{1cm} - \delta_{pq} \alpha_{kl}^* \beta_{ij} - \beta_{ij}^* \delta_{pq}  \alpha_{kl}^*- \frac{1}{2} \delta_{pq} \alpha_{kl}^*                \beta_{ij}\beta_{ij}^* \Bigl. +  \frac{1}{2} \alpha_{kl} \delta_{pq}^*\beta_{ij}\beta_{ij}^* \alpha_{kl} \delta_{pq}^* \Bigr)(z,x,f). \numberthis{eqnt2}
\end{align*}
\noindent On computing using coordinates, we get
\begin{align*}
 \left[ E_{\beta_{ij}}^*,\right.& \left[E_{\alpha_{kl}},E_{\delta_{pq}}\right]\left.\right](z,x,f)\\
				 &=\Bigl(z+\Bigl\{\langle f,x_p\rangle\langle x_k,f_i \rangle
				    -\langle f,x_k\rangle\langle x_p,f_i \rangle \Bigr\}\langle t_{pq},w_{kl} \rangle v_{ij},\;x- \Bigl\{\langle v_{ij},z \rangle \Bigr.\;\\
				 &\hspace{1cm} 
				  + \langle f_i,x_k \rangle\langle f,x_p \rangle \langle t_{pq},w_{kl} \rangle q(v_{ij})- \langle f_i,x_p \rangle\langle f,x_k \rangle\langle t_{pq},w_{kl} \rangle q(v_{ij})  \\
				 &\hspace{1cm}\Bigl.+ \langle x,f_i \rangle q(v_{ij})\Bigr\}\langle f_i,x_k \rangle\langle  t_{pq},w_{kl} \rangle x_p  +\Bigl\{\langle f_i,x_k \rangle\langle f,x_p \rangle\langle t_{pq},w_{kl} \rangle q(v_{ij}) \Bigr.\\
				 &\hspace{1cm}\Bigl.+\langle v_{ij},z \rangle+\langle f_i,x \rangle q(v_{ij})
				 -\langle f,x_k \rangle\langle f_i,x_p\rangle \langle t_{pq},w_{kl}\rangle q(v_{ij})\Bigr\}  \langle f_i,x_p\rangle\langle t_{pq},w_{kl} \rangle x_k,\;\\
				 &\hspace{1cm} \left.f+\Bigl\{\langle f,x_p \rangle\langle f_i,x_k\rangle-\langle f,x_k \rangle\langle f_i,x_p\rangle      \Bigr\}\langle t_{pq},w_{kl}\rangle q(v_{ij}) f_i\right). \numberthis{eqnt1}  
    \end{align*}
    Now, for $\lambda_{kj}\;=\;\alpha_{kl}\delta_{pq}^*\beta_{ij}$ as in
the statement, we can describe the maps $\lambda_{kj},
\lambda_{kj}^*, \frac{1}{2}\lambda_{kj}\lambda_{kj}^* $ and the
elementary transformation $E_{\lambda_{kj}}$ as
\begin{align*}
       \lambda_{kj}(z,x,f)
       &= \alpha_{kl}\delta_{pq}^*\beta_{ij}(z,x,f)
        =\Bigl(0, \langle  v_{ij},z \rangle\langle f_i,x_p \rangle \langle  w_{kl},t_{pq} \rangle x_k,0 \Bigr), \\
        \lambda_{kj}^*(z,x,f)
      &= \beta_{ij}^* \delta_{pq} \alpha_{kl}^* (z,x,f)
      = \Bigl(\langle f,x_k \rangle \langle  t_{pq},w_{kl}\rangle \langle x_p,f_i \rangle v_{ij},0,0 \Bigr),\\
        \frac{1}{2}\lambda_{kj}\lambda_{kj}^*(z,x,f)
       &= \Bigl(0,\langle f,x_k \rangle \langle f_i,x_p \rangle^2 \langle  w_{kl},t_{pq} \rangle^2 q(v_{ij})  x_k,0\Bigr),\\
              E_{\lambda_{kj}}(z,x,f)
            &= \Bigl( I + \lambda_{kj} - \lambda_{kj}^* - \frac{1}{2}\lambda_{kj}\lambda_{kj}^* \Bigr)      (z,x,f)\\
            &=\Bigl(z -\langle f,x_ k\rangle \langle f_i,x_p \rangle \langle  w_{kl},t_{pq}                           \rangle v_{ij} , \Bigr.   x+\Bigl\{\langle v_{ij},z\rangle  \Bigr. \\
            &\quad\Bigl.-\langle f,x_k \rangle\langle f_i,x_p \rangle \langle  w_{kl},t_{pq} \rangle q(v_{ij})\Bigr\}x_k ,f \Bigr).
     \end{align*}
     If $i \neq k$, then, by Remark~\ref{r01}, we have
     \begin{align*}
        \left[E_{\beta_{ij}}^*,\right.\left.E_{\frac{\lambda_{kj}}{2}} \right] (z,x,f)
            &=  {\left[E_{\frac{\lambda_{kj}}{2}}, E_{\beta_{ij}}^* \right]}^{-1} (z,x,f)
            = \Bigl( I  - \frac{1}{2} \beta_{ij} \lambda_{kj}^* + \frac{1}{2} \lambda_{kj} \beta_{ij}^* \Bigr)(z,x,f)\\
            &= \Bigl(z,\; x+\langle x,f_i \rangle\langle f_i,x_p \rangle \langle  w_{kl},t_{pq} \rangle q(v_{ij}) x_k,\;
             f\Bigr.\\
             &\hspace{1cm}\Bigl.-\langle f,x_k \rangle\langle f_i,x_p \rangle \langle  w_{kl},t_{pq} \rangle q(v_{ij}) f_i\Bigr)
    \end{align*}
    and hence we get
    \begin{align*}
          E_{\lambda_{kj}}&\left[E_{\beta_{ij}}^*,E_\frac{\lambda_{kj}}{2} \right] (z,x,f)\\
           &= \Bigl( I + \lambda_{kj} - \lambda_{kj}^* - \frac{1}{2}\lambda_{kj}\lambda_{kj}^* - \frac{1}{2} \beta_{ij} \lambda_{kj}^* + \frac{1}{2} \lambda_{kj} \beta_{ij}^*   \Bigr)(z,x,f)\\
           &= \Bigl(z-\langle f,x_k\rangle\langle x_p,f_i \rangle\langle t_{pq},w_{kl} \rangle v_{ij}, x+\Bigl\{ \langle f_i,x \rangle  q(v_{ij}) +\langle v_{ij},z \rangle \Bigr.\Bigr.\\
           &\hspace{1cm}-\langle f,x_k \rangle\langle f_i,x_p\rangle\langle t_{pq},w_{kl}\rangle q(v_{ij})\Bigl.\Bigr\}\langle t_{pq},w_{kl} \rangle \langle f_i,x_p\rangle x_k,\\
           &\hspace{1cm}\Bigl.f-\langle f,x_k \rangle\langle f_i,x_p\rangle\langle t_{pq},w_{kl}\rangle q(v_{ij})f_i\Bigr). \numberthis{eqnt3}
      \end{align*}
      Similarly, if $i \neq p$, we have
      \begin{align*}
          E_{\xi_{pj}}&\left[E_{\beta_{ij}}^*,E_\frac{\xi_{pj}}{2} \right] (z,x,f)\\
           &= \Bigl ( I + \xi_{pj} - \xi_{pj}^* - \frac{1}{2}\xi_{pj}\xi_{pj}^* - \frac{1}{2} \beta_{ij} \xi_{pj}^* + \frac{1}{2} \xi_{pj} \beta_{ij}^*   \Bigr)(z,x,f)\\
           &= \Bigl(z-\langle f,x_k\rangle\langle x_p,f_i \rangle\langle t_{pq},w_{kl} \rangle v_{ij},x +\Bigl\{\langle f_i,x \rangle q(v_{ij})\Bigr.\Bigr.\\
       &\hspace{1cm} -\langle f,x_k \rangle\langle f_i,x_p\rangle \langle t_{pq},w_{kl}\rangle q(v_{ij})\Bigl.+\langle v_{ij},z \rangle\Bigr\}\langle t_{pq},w_{kl}\rangle\\
       &\hspace{1cm}\langle f_i,x_p\rangle  x_k, \Bigl. f-\langle f,x_k \rangle\langle f_i,x_p\rangle\langle t_{pq},w_{kl}\rangle q(v_{ij})f_i\Bigr    ). \numberthis{eqnt4}
      \end{align*}

     We now consider the following possible conditions on the indices.

     \smallskip

\noindent\textbf{Case (i):} $i = p$.

  \smallskip

      \noindent If $i = p$, then, by Equations~\eqref{eqnt1},~\eqref{eqnt2}, and~\eqref{eqnt3}, we have
    \begin{align*}
    \left[E_{\beta_{ij}}^*\right.,&\left[E_{\alpha_{kl}},E_{\delta_{pq}}\left.\right]\right](z,x,f)\\
          &= \Bigl( I - \beta_{pj}^* \delta_{pq} \alpha_{kl}^* + \alpha_{kl} \delta_{pq}^*\beta_{pj}  + \frac{1}{2} \alpha_{kl} \delta_{pq}^*\beta_{pj}\beta_{pj}^* - \frac{1}{2}\beta_{pj}\beta_{pj}^*\delta_{pq} \alpha_{kl}^* \Bigr.\\
          &\hspace{1cm}\left.- \frac{1}{2} \alpha_{kl} \delta_{pq}^*\beta_{pj}\beta_{pj}^* \delta_{pq} \alpha_{kl}^*\right)(z,x,f)\\
          &= \Bigl(z-\langle f,x_k\rangle\langle t_{pq},w_{kl} \rangle v_{pj},\;x+ \Bigl\{\langle v_{pj},z \rangle - \langle f,x_k \rangle\langle t_{pq},w_{kl}\rangle q(v_{pj}) \Bigr.\\
          &\hspace{1cm} \Bigl.\Bigl.+ \langle f_p,x \rangle q(v_{pj})\Bigr\}\langle  t_{pq},w_{kl} \rangle x_k,\;f-\langle f,x_k \rangle\langle                         t_{pq},w_{kl}\rangle q(v_{pj})f_p \Bigr)\\
          =\;& E_{\lambda_{kj}}\left[ E_{\beta_{ij}}^*,E_\frac{\lambda_{kj}}{2} \right] (z,x,f).
    \end{align*}
    \noindent \textbf{Case (ii):} $i = k.$

    \smallskip
    \noindent If $i = k$, then, by Equations~\eqref{eqnt1},~\eqref{eqnt2}, and~\eqref{eqnt4}, we have
    \begin{align*}
    \left[ E_{\beta_{ij}}^*\right.,&\left[ E_{\alpha_{kl}},E_{\delta_{pq}} \right]\left. \right](z,x,f)\\
          &= \Bigl( I + \beta_{kj}^* \alpha_{kl} \delta_{pq}^*   + \frac{1}{2} \beta_{kj}\beta_{kj}^*\alpha_{kl}\delta_{pq}^* - \frac{1}{2}\delta_{pq}\alpha_{kl}^*\beta_{kj}\beta_{kj}^* - \delta_{pq}\alpha_{kl}^*\beta_{kj}\Bigr.\\
          &\hspace{1cm}\Bigl.- \frac{1}{2} \delta_{pq}\alpha_{kl}^*\beta_{kj}\beta_{kj}^* \alpha_{kl} \delta_{pq}^* \Bigr)(z,x,f)\\
          &= \Bigl(z+\langle f,x_p\rangle\langle t_{pq},w_{kl} \rangle v_{kj}, x  - \Bigl\{   \langle v_{kj},z \rangle + \langle f,x_p \rangle \langle t_{pq},w_{kl} \rangle q(v_{kj}) \Bigr.\Bigr. \\
         & \hspace{1cm}\Bigl. +\langle x,f_k \rangle q(v_{kj})\Bigr\}\langle  t_{pq},w_{kl} \rangle x_p ,\Bigl. f+ \langle f,x_p \rangle\langle t_{pq},w_{kl}\rangle q(v_{kj})  f_k \Bigr)\\
          &= E_{\xi_{pj}}\left[ E_{\beta_{kj}}^*,E_\frac{\xi_{pj}}{2} \right] (z,x,f).
    \end{align*}
      \noindent \textbf{Case(iii): } $i \neq k$ and $i \neq p$.

    \smallskip

    \noindent If $i \neq k$ and $i \neq p$, then, by Equation~\eqref{eqnt1}, we have
    \[
    \left[ E_{\beta_{ij}}^*,\left[ E_{\alpha_{kl}},E_{\delta_{pq}} \right] \right](z,x,f) = I(z,x,f).\qedhere
    \]
\end{proof}

As a consequence of the above lemma on triple commutators, we
observe the following commutator relations.

\begin{cor}\label{c04}
    For any $i,j,k,l,p,q$ with $1 \leq i,k,p \leq m$, $1 \leq j,l,q \leq n$, $i \neq k$ and $k \neq p$ and $a,b,c,d,e,f \in A$ with $abc=def$ and $a^2bc=d^2ef$, the following equation holds.
    \[
     \left[ E_{a\beta_{ij}}^*, \left[ E_{b\alpha_{kl}},E_{c\delta_{pq}} \right] \right] =  \left[ E_{d\beta_{ij}}^*,\left[ E_{e\alpha_{kl}}, E_{f\delta_{pq}} \right] \right] .
    \]
\end{cor}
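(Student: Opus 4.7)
The plan is to reduce the statement to Lemma~\ref{l04} and then compare coefficients after scaling. Since $i \neq k$ is assumed, only two of the three cases in Lemma~\ref{l04} are possible: either $i = p$ (Case (i) of the lemma) or $i \notin \{k, p\}$ (Case (iii)). In the latter case, Case (iii) gives that each of the two triple commutators equals $I$, so the claimed equality holds trivially, with no condition required on the scalars. Thus the only substantive case is $i = p$.

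For Case $i = p$, I would write out the explicit form of the triple commutator from Case (i) of the proof of Lemma~\ref{l04}, applied to the scaled generators. Under the replacements $\alpha \mapsto b\alpha$, $\beta \mapsto a\beta$, $\delta \mapsto c\delta$, the associated vectors transform as $w_{kl} \mapsto b\, w_{kl}$, $t_{pq} \mapsto c\, t_{pq}$, $v_{pj} \mapsto a\, v_{pj}$, whence $\langle t_{pq}, w_{kl}\rangle \mapsto bc\,\langle t_{pq}, w_{kl}\rangle$, $\langle v_{pj}, z\rangle \mapsto a\,\langle v_{pj}, z\rangle$, and $q(v_{pj}) \mapsto a^2 q(v_{pj})$. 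Inspecting the formula, every scalar coefficient appearing in any component of the transformation is a monomial of exactly one of three types: those of scale $abc$ (the coefficient of $v_{pj}$ in the $Q$-component, the linear-in-$z$ part of the $x_k$-coefficient, etc.), those of scale $a^2 bc$ (the coefficient of $f_p$ in the $P^*$-component and the $\langle f_p, x\rangle q(v_{pj})$ term in the $x_k$-coefficient), and those of scale $(abc)^2$ (the $\langle f, x_k\rangle \langle t_{pq}, w_{kl}\rangle^2 q(v_{pj})$ term).

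The hypothesis $abc = def$ equates all coefficients of the first type and, after squaring, all coefficients of the third type; the hypothesis $a^2 bc = d^2 ef$ equates all coefficients of the second type. A coefficient-by-coefficient match then yields the claimed identity of orthogonal transformations. The main obstacle is purely bookkeeping: one has to be certain that no monomial escapes these three scaling patterns. This is verified by a direct inspection of the finitely many terms in the explicit expression from Case (i) of Lemma~\ref{l04}, so the argument is mechanical rather than conceptual.
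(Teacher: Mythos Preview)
Your proposal is correct and follows essentially the same approach as the paper: both reduce to the Case~(i) formula of Lemma~\ref{l04} (the case $i=p$, the only nontrivial one once $i\neq k$ is assumed), observe that under the scaling $\beta\mapsto a\beta$, $\alpha\mapsto b\alpha$, $\delta\mapsto c\delta$ every nonidentity term acquires a factor of $abc$, $a^2bc$, or $(abc)^2$, and then match these against $def$, $d^2ef$, $(def)^2$ using the hypotheses. The only cosmetic difference is that the paper carries out the comparison on the coordinate-free expression from Case~(i), whereas you track the scaling through the explicit coordinate formula; the bookkeeping is identical.
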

\begin{proof}
For any $i,j,k,l,p,q$ with $1 \leq i,k,p \leq m$, $1 \leq j,l,q \leq n$, $i \neq k$ and $k \neq p$ and $a,b,c,d,e,f \in A$ with $abc=def$ and $a^2bc=d^2ef$, we have
\begin{align*}
 \left[ E_{a\beta_{ij}}^*, \left[ E_{b\alpha_{kl}},E_{c\delta_{pq}} \right] \right](z,x,f)
          &= \Bigl( I - a^2bc\beta_{ij}^* \delta_{pq} \alpha_{kl}^* + abc\alpha_{kl}                    \delta_{pq}^*\beta_{ij}  + \frac{1}{2}  a^2bc\alpha_{kl}                        \delta_{pq}^*\beta_{ij}\beta_{ij}^*  \Bigr.\\
          &\quad\Bigl. - \frac{1}{2}a^2bc\beta_{ij}\beta_{ij}^*\delta_{pq} \alpha_{kl}^* - \frac{1}{2}              a^2b^2c^2\alpha_{kl} \delta_{pq}^*\beta_{ij}\beta_{ij}^* \delta_{pq}                    \alpha_{kl}^*\Bigr)(z,x,f)\\
          &=\Bigl( I - d^2ef\beta_{ij}^* \delta_{pq} \alpha_{kl}^* + def\alpha_{kl} \delta_{pq}^*\beta_{ij}
              + \frac{1}{2} d^2ef\alpha_{kl} \delta_{pq}^*\beta_{ij}\beta_{ij}^*  \Bigr.\\
        &\Bigl. - \frac{1}{2}d^2ef\beta_{ij}\beta_{ij}^*\delta_{pq} \alpha_{kl}^* - \frac{1}{2} d^2e^2f^2\alpha_{kl} \delta_{pq}^*\beta_{ij}\beta_{ij}^* \delta_{pq} \alpha_{kl}^*\Bigr)(z,x,f)\\
 =\;& \left[ E_{d\beta_{ij}}^*, \left[ E_{e\alpha_{kl}},E_{f\delta_{pq}} \right] \right](z,x,f).\qedhere
\end{align*}
\end{proof}
The following lemma on triple commutators involves a mixed
commutator.

\begin{lem}\label{l05}
    Let $\alpha, \delta \in \hom_A(Q,P)$ and $\beta \in \hom_A(Q,P^*)$.
    Then, for $i,j,k,l,p,q$ with $1 \leq i,k,p \leq m$, $1 \leq j,l,q \leq n$
    and $k \neq p$, the triple commutator
    $\left[
        E_{\alpha_{ij}},\left[ E_{\delta_{kl}},E_{\beta_{pq}}^*\right]
    \right]$ is given by
   \[
     \left[E_{\alpha_{ij}},\left[ E_{\delta_{kl}},E_{\beta_{pq}}^* \right]\right] =
     \begin{cases}
      \vspace{2mm}
       E_{\mu_{kj}}\left[ E_{\alpha_{ij}},E_\frac{\mu_{kj}}{2} \right], &\textnormal{ if }\quad i = p,\\
       I         &\textnormal{ if }\quad i = k \quad\textnormal{ or }\quad i \neq p,
       \end{cases}
   \]
    where $\mu_{kj}=\delta_{kl}\beta_{pq}^*\alpha_{ij}$.
\end{lem}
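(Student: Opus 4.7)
My plan is to mimic the argument of Lemma~\ref{l04} almost verbatim, the only structural change being that the inner commutator is now of mixed type (treated by Lemma~\ref{l02}) rather than of same-type (Lemma~\ref{l01}). Concretely, since $k\neq p$, Lemma~\ref{l02} yields
\[
C:=\bigl[E_{\delta_{kl}},E_{\beta_{pq}}^*\bigr]=I-\delta_{kl}\beta_{pq}^*+\beta_{pq}\delta_{kl}^*,
\]
acting in coordinates by $x\mapsto x-\langle x,f_p\rangle\langle t_{kl},v_{pq}\rangle x_k$ and $f\mapsto f+\langle f,x_k\rangle\langle v_{pq},t_{kl}\rangle f_p$; its inverse is read off from Remark~\ref{r01}.

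Next, I would expand $[E_{\alpha_{ij}},C]=E_{\alpha_{ij}}CE_{\alpha_{ij}}^{-1}C^{-1}$ by substituting the coordinate-free formula $E_{\alpha_{ij}}=I+\alpha_{ij}-\alpha_{ij}^*-\tfrac12\alpha_{ij}\alpha_{ij}^*$ and simplifying via the many vanishing composites (for example $\alpha_{ij}\delta_{kl}=0$ and $\delta_{kl}^*\beta_{pq}=0$, because the outputs land in the wrong summand of $Q\oplus P\oplus P^*$). The surviving terms are organised around the two ``bridging'' composites $\alpha_{ij}^*\delta_{kl}\beta_{pq}^*$ and $\beta_{pq}\delta_{kl}^*\alpha_{ij}$, together with their longer cousins produced by the $\tfrac12\alpha_{ij}\alpha_{ij}^*$ tail of $E_{\alpha_{ij}}^{\pm 1}$; the coordinate form of each such term carries repeated Kronecker-type factors $\langle x_i,f_p\rangle$ and $\langle x_i,f_k\rangle$.

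In parallel, I would write down the candidate right-hand side: from the definition one finds
\[
\mu_{kj}(z,x,f)=\bigl(0,\;\langle w_{ij},z\rangle\langle x_i,f_p\rangle\langle t_{kl},v_{pq}\rangle x_k,\;0\bigr),
\]
so $\mu_{kj}\in\hom_A(Q,P)$ is a bona fide generator with leading index $k$. The dual $\mu_{kj}^*=\beta_{pq}^*\delta_{kl}^*\alpha_{ij}^*$, the composite $\tfrac12\mu_{kj}\mu_{kj}^*$, the elementary $E_{\mu_{kj}}$, and the short commutator $[E_{\alpha_{ij}},E_{\mu_{kj}/2}]=I-\tfrac12\alpha_{ij}\mu_{kj}^*+\tfrac12\mu_{kj}\alpha_{ij}^*$ (by Lemma~\ref{l01}, applicable since in the only nontrivial case $i=p$ we automatically have $i\neq k$) are then straightforward to record.

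Finally, I would specialise in three cases. If $i\neq k$ and $i\neq p$, every surviving term in the expansion of Step~2 acquires a vanishing factor $\langle x_i,f_p\rangle$ or $\langle x_i,f_k\rangle$, and the triple commutator collapses to $I$. If $i=k$ (so $i\neq p$), the factor $\langle x_i,f_p\rangle=0$ kills $\mu_{kj}$, and the coordinate expression of Step~2 reduces to a pair of $x$-contributions that cancel by the same mechanism as in Lemma~\ref{l01} (which asserts that $E_{\alpha_{ij}}$ and $E_{\delta_{kl}}$ commute), again producing $I$. The substantive case is $i=p$, where $\langle x_i,f_p\rangle=1$, and a direct coordinate comparison matches the Step~2 expansion term-for-term with the expansion $I+\mu_{kj}-\mu_{kj}^*-\tfrac12\mu_{kj}\mu_{kj}^*-\tfrac12\alpha_{ij}\mu_{kj}^*+\tfrac12\mu_{kj}\alpha_{ij}^*$ of $E_{\mu_{kj}}[E_{\alpha_{ij}},E_{\mu_{kj}/2}]$. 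The main obstacle is purely bookkeeping: the outer commutator in Step~2 produces on the order of two dozen composites after the obvious zeros are removed, and in each case one must carefully determine which Kronecker factors they carry and verify the cancellations; this is precisely the computation that was performed for Lemma~\ref{l04}, adapted to the mixed-inner setting.
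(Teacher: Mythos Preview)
Your plan matches the paper's proof essentially line for line: use Lemma~\ref{l02} for the inner commutator, expand $E_{\alpha_{ij}}CE_{\alpha_{ij}}^{-1}C^{-1}$ in both coordinate-free and coordinate form, write out $\mu_{kj}$, $\mu_{kj}^*$, $E_{\mu_{kj}}$, apply Lemma~\ref{l01} to get $[E_{\alpha_{ij}},E_{\mu_{kj}/2}]$, and then split into the cases $i=p$ versus $i=k$ or $i\neq p$. One small slip to fix: the adjoint of $\mu_{kj}=\delta_{kl}\beta_{pq}^*\alpha_{ij}$ is $\mu_{kj}^*=\alpha_{ij}^*\beta_{pq}\delta_{kl}^*$ (not $\beta_{pq}^*\delta_{kl}^*\alpha_{ij}^*$, which does not type-check), and correspondingly the surviving ``bridging'' composites are $\delta_{kl}\beta_{pq}^*\alpha_{ij}$ and $\alpha_{ij}^*\beta_{pq}\delta_{kl}^*$.
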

\begin{proof}
For $\alpha, \delta \in \hom_A(Q,P)$ and  $\beta \in \hom_A(Q,P^*)$ and for any $i,j,k,l,p,q$ with $1 \leq i,k,p \leq m$, $1 \leq j,l,q \leq n$ and $k \neq p$, we have the coordinate-free expressions
\begin{align*}
        \left[ E_{\alpha_{ij}}, \right.&\left[ E_{\delta_{kl}},E_{\beta_{pq}}^* \right]\left.\right](z,x,f)\\
        &= \Bigl( I +\delta_{kl} \beta_{pq}^* \alpha_{ij} + \alpha_{ij}^* \beta_{pq}\delta_{kl}^* \beta_{pq}\delta_{kl}^*-  \beta_{pq}\delta_{kl}^*\beta_{pq}\delta_{kl}^*    \Bigr.  - \delta_{kl} \beta_{pq}^*\delta_{kl} \beta_{pq}^*- \frac{1}{2} \delta_{kl} \beta_{pq}^* \alpha_{ij} \alpha_{ij}^* \beta_{pq}\delta_{kl}^* \\
        &\hspace{1cm}- \alpha_{ij}^* \beta_{pq}\delta_{kl}^*  + \frac{1}{2}\alpha_{ij} \alpha_{ij}^* \beta_{pq}\delta_{kl}^* \beta_{pq}\delta_{kl}^* - \frac{1}{2}\alpha_{ij} \alpha_{ij}^* \beta_{pq}\delta_{kl}^*+ \frac{1}{2} \delta_{kl} \beta_{pq}^* \alpha_{ij} \alpha_{ij}^*\Bigr)(z,x,f)\Bigr). \numberthis{eqnt6}
   \end{align*}
   Now if we use coordinates, we obtain
\begin{align*}
 \left[ E_{\alpha_{ij}},\right.& \left[ E_{\delta_{kl}},E_{\beta_{pq}}^* \right]\left.\right](z,x,f)\\
 &= \Bigl(z-\langle f,x_k \rangle \langle t_{kl}, v_{pq} \rangle \langle f_p,x_i \rangle w_{ij},\Bigr. x+ \Bigl\{ \langle f,x_i \rangle q(w_{ij}) +\langle w_{ij},z \rangle\Bigr.\\
          &\hspace{1cm}\Bigl.  - \langle f,x_k \rangle  \langle t_{kl}, v_{pq} \rangle  \langle x_i,f_p \rangle  q(w_{ij}) \Bigr\}\langle t_{kl},v_{pq}\rangle \langle x_i,f_p \rangle x_k\\
          &\hspace{1cm} - \langle f,x_k \rangle \langle t_{kl}, v_{pq} \rangle \langle x_i,f_p \rangle q(w_{ij}) x_i,\Bigl. f\Bigr). \numberthis{eqnt5}
  \end{align*}
   The maps $\mu_{kj}$, $\mu_{kj}^*,
   \frac{1}{2}\mu_{kj}\mu_{kj}^* $ and the elementary transformation $E_{\mu_{kj}}^*$ are
   given by the following expressions.
\begin{align*}
      \mu_{kj}(z,x,f)    =\;& \delta_{kl}\beta_{pq}^*\alpha_{ij}(z,x,f)
               = (0,\langle w_{ij},z \rangle \langle t_{kl},v_{pq}\rangle \langle x_i,f_p \rangle x_k,0),\\
      \mu_{kj}^*(z,x,f) =\;& \alpha_{ij}^*\beta_{pq}\delta_{kl}^*(z,x,f)
               = (\langle f,x_k \rangle \langle t_{kl}, v_{pq} \rangle \langle x_i,f_p \rangle w_{ij},0,0),\\
      \frac{1}{2}\mu_{kj}\mu_{kj}^*(z,x,f)
             =\;& (0,\langle f,x_k \rangle \langle t_{kl}, v_{pq} \rangle^2 \langle x_i,f_p \rangle^2  q(w_{ij})x_k,0),\\
      E_{\mu_{kj}}^*(z,x,f)
            =\;& \left( I + \mu_{kj} - \mu_{kj}^* - \frac{1}{2}\mu_{kj}\mu_{kj}^* \right)(z,x,f)\\
            =\;& (z - \langle f,x_k \rangle \langle t_{kl}, v_{pq} \rangle \langle x_i,f_p \rangle w_{ij}, x +\langle w_{ij},z \rangle \langle    t_{kl},v_{pq}\rangle \langle x_i,f_p \rangle x_k \\
            &- \langle f,x_k \rangle \langle t_{kl}, v_{pq} \rangle^2 \langle x_i,f_p \rangle^2 q(w_{ij}) x_k,\;f\;).
    \end{align*}
    If $i \neq k$, then, by Lemma~\ref{l01}, we have
    \begin{align*}
        \left[ E_{\alpha_{ij}},\right.&\left.E_\frac{\mu_{kj}}{2} \right] (z,x,f)\\
              &= \Bigl( I + \frac{1}{2}\mu_{kj}\alpha_{ij}^* - \frac{1}{2} \alpha_{ij} \mu_{kj}^*\Bigr)(z,x,f)\\
              &= \Bigl(z, x + \langle f,x_i \rangle \langle t_{kl},v_{pq}\rangle \langle
              x_i,f_p \rangle q(w_{ij})x_k - \langle f,x_k \rangle \langle t_{kl}, v_{pq}
              \rangle\langle x_i,f_p \rangle q(w_{ij}) x_i,f \Bigr)
                  \end{align*}
    and hence we get
    \begin{align*}
       E_{\mu_{kj}}&\left[ E_{\alpha_{ij}},E_\frac{\mu_{kj}}{2} \right] (z,x,f)\\
              &= \Bigl(I + \mu_{kj} - \mu_{kj}^* - \frac{1}{2}\mu_{kj}\mu_{kj}^* + \frac{1}{2}\mu_{kj}\alpha_{ij}^* - \frac{1}{2} \alpha_{ij} \mu_{kj}^* \Bigr) (z,x,f)\\
               &= \Bigl(z - \langle f,x_k \rangle \langle t_{kl}, v_{pq} \rangle \langle x_i,f_p \rangle w_{ij},\Bigr.  x + \Bigl\{ \langle f,x_i \rangle q(w_{ij})+\langle w_{ij},z \rangle  \\
              &\hspace{1cm}- \langle f,x_k \rangle \langle t_{kl}, v_{pq} \rangle \langle x_i,f_p \rangle q(w_{ij})\Bigr\} \langle t_{kl}, v_{pq} \rangle \langle x_i,f_p \rangle x_k \\
              &\hspace{1cm} \Bigl.\Bigl.- \langle f,x_k \rangle \langle t_{kl}, v_{pq} \rangle\langle x_i,f_p \rangle q(w_{ij}) x_i,\;f\Bigr). \numberthis{eqnt7}
\end{align*}

We now consider the following possible conditions on the indices.

     \smallskip

\noindent\textbf{Case(i):} $i = p$.

  \smallskip

      \noindent If $i = p$, then, by Equations~\eqref{eqnt5}, \eqref{eqnt6} and \eqref{eqnt7}, we have
  \begin{align*}
      \left[ E_{\alpha_{ij}},\right.&\Bigl[ E_{\delta_{kl}},E_{\beta_{pq}}^* \Bigr]\left.\right](z,x,f)\\
        &= \Bigl( I - \alpha_{ij}^* \beta_{pq}\delta_{kl}^* + \delta_{kl} \beta_{pq}^* \alpha_{ij}  + \frac{1}{2} \delta_{kl} \beta_{pq}^* \alpha_{ij} \alpha_{ij}^* - \frac{1}{2} \delta_{kl} \beta_{pq}^* \alpha_{ij} \alpha_{ij}^* \beta_{pq}\delta_{kl}^*\Bigr. \\
        &\hspace{1cm}\left. \left.  - \frac{1}{2}\alpha_{ij} \alpha_{ij}^* \beta_{pq}\delta_{kl}^*  \right.\right.\Bigl. + \alpha_{ij}^* \beta_{pq}\delta_{kl}^* \beta_{pq}\delta_{kl}^*  \Bigr)(z,x,f) \\
        &= \Bigl(z-\langle f,x_k \rangle \langle t_{kl}, v_{pq} \rangle w_{ij},\;x + \langle w_{ij},z \rangle \langle t_{kl},v_{pq}\rangle x_k + \langle f,x_i \rangle \langle t_{kl},v_{pq}\rangle q(w_{ij}) x_k\Bigr.\\
        &\hspace{1cm}
         - \langle f,x_k \rangle \langle t_{kl}, v_{pq} \rangle q(w_{ij}) x_i  \Bigl.- \langle f,x_k \rangle \langle t_{kl}, v_{pq} \rangle^2 q(w_{ij}) x_k,f\Bigr)\\
        &= E_{\mu_{kj}}\left[ E_{\alpha_{ij}},E_\frac{\mu_{kj}}{2}\right] (z,x,f).
\end{align*}
\noindent \textbf{Case(ii): } $i = k$ or $i \neq p$.

    \smallskip

    \noindent If $i = k$ or $i \neq p$, then, by Equation~\eqref{eqnt5}, we have
    \[
    \left[ E_{\alpha_{ij}},\left[ E_{\delta_{kl}},E_{\beta_{pq}}^* \right]\right](z,x,f) = I(z,x,f).\qedhere
    \]
\end{proof}

We now deduce the commutator identities from the above lemma.

\begin{cor}\label{c05}
 For any $i,j,k,l,p,q$ with $1 \leq i,k,p \leq m$, $1 \leq j,l,q \leq n$, $i \neq p$ and $k \neq p$ and $a,b,c,d,e,f \in A$ with $abc=def$ and $a^2bc=d^2ef$, the following equation holds.
 \[
 \left[ E_{a\alpha_{ij}}, \left[ E_{b\delta_{kl}},E_{c\beta_{pq}}^* \right] \right] = \left[ E_{d\alpha_{ij}}, \left[ E_{e\delta_{kl}}, E_{f\beta_{pq}}^* \right] \right].
 \]
\end{cor}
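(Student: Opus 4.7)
The plan is to appeal directly to Lemma~\ref{l05} applied to each side of the asserted equation. Under the hypotheses $i\neq p$ and $k\neq p$, both triple commutators fall into Case~(ii) of that lemma, so each evaluates to the identity transformation $I$. The case distinction in Lemma~\ref{l05} depends only on the indices and on the vanishing of bilinear pairings such as $\langle x_i, f_p\rangle = 0$ (when $i\neq p$) and $\langle x_k, f_p\rangle = 0$ (when $k\neq p$); replacing $\alpha$, $\delta$, $\beta$ by $a\alpha$, $b\delta$, $c\beta$, or by $d\alpha$, $e\delta$, $f\beta$, leaves these vanishings intact. Hence both sides reduce to $I$ and the corollary follows.

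To mirror the style of the proof of Corollary~\ref{c04}, one could alternatively begin from the coordinate-free expansion in~\eqref{eqnt6}, insert the scalars into each $\alpha_{ij}$, $\delta_{kl}$, $\beta_{pq}$ factor, and track the resulting monomials in $a,b,c$ and $d,e,f$. The non-identity summands would carry coefficients in $\{abc,\; a^2bc,\; b^2c^2,\; ab^2c^2,\; a^2b^2c^2\}$; the hypotheses $abc=def$ and $a^2bc=d^2ef$, together with $a^2b^2c^2=d^2e^2f^2$ obtained by squaring the first, would match three of these, though the remaining $b^2c^2$ and $ab^2c^2$ terms would not be forced to match by the given scalar hypotheses alone. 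The role of the index hypotheses $i\neq p$ and $k\neq p$ is precisely to kill all of these terms: every non-identity summand in~\eqref{eqnt6} contains either a factor of $\langle x_i, f_p\rangle$ (in the summands built from $\delta_{kl}\beta^{\ast}_{pq}\alpha_{ij}$, $\alpha^{\ast}_{ij}\beta_{pq}\delta^{\ast}_{kl}$, and their $\alpha_{ij}\alpha^{\ast}_{ij}$-composites) or a factor of $\langle x_k, f_p\rangle$ (in the iterates $\beta_{pq}\delta^{\ast}_{kl}\beta_{pq}\delta^{\ast}_{kl}$ and $\delta_{kl}\beta^{\ast}_{pq}\delta_{kl}\beta^{\ast}_{pq}$), so all ten non-identity summands vanish term by term.

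The main step I expect to verify carefully is precisely this term-by-term vanishing; it is a short but mechanical checklist over the ten summands of~\eqref{eqnt6}, and the only real obstacle is not to skip any term, since several of them involve triple or quadruple compositions whose innermost pairing is the one that vanishes. Once the checklist is complete, the write-up is a short invocation of Case~(ii) of Lemma~\ref{l05} applied to the rescaled maps on each side, after which both sides equal $I$ and the asserted equation is immediate; the scalar identities $abc=def$ and $a^2bc=d^2ef$ are included only to match the template of Corollary~\ref{c04} and play no essential role here.
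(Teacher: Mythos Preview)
Your proposal is correct and follows the paper's intended route, namely a direct deduction from Lemma~\ref{l05}; the paper itself gives no proof for this corollary beyond the sentence ``We now deduce the commutator identities from the above lemma.'' Your term-by-term check of the ten non-identity summands in~\eqref{eqnt6} is accurate: each contains either a $\langle x_i,f_p\rangle$ or a $\langle x_k,f_p\rangle$ and hence vanishes under $i\neq p$, $k\neq p$.

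You also observe something the paper does not make explicit: with the index hypotheses as stated, both sides lie in Case~(ii) of Lemma~\ref{l05} and equal $I$, so the scalar hypotheses $abc=def$ and $a^2bc=d^2ef$ are never used. This is a genuine point. By contrast, in the parallel Corollaries~\ref{c04}, \ref{c06}, \ref{c07} the index condition is $i\neq k$ (together with $k\neq p$), which still permits the nontrivial case $i=p$ and makes the scalar identities do real work, exactly as in the displayed proof of Corollary~\ref{c04}. Your remark that the $b^2c^2$ and $ab^2c^2$ monomials in~\eqref{eqnt6} would not be matched by $abc=def$, $a^2bc=d^2ef$ alone is also correct and is precisely why the hypothesis $k\neq p$ is needed even in the nontrivial reading: it kills the iterates $\beta_{pq}\delta_{kl}^*\beta_{pq}\delta_{kl}^*$ and $\delta_{kl}\beta_{pq}^*\delta_{kl}\beta_{pq}^*$ before the scalar bookkeeping begins. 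So your analysis both proves the corollary as stated and clarifies the role (or, here, redundancy) of its hypotheses.
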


\begin{lem}\label{l06}
Let $\alpha \in \hom_A(Q,P)$ and $\beta, \gamma \in \hom_A(Q,P^*)$. Then, for $i,j,k,l,p,q$ with $1 \leq i,k,p \leq m$, $1 \leq j,l,q \leq n$ and $k \neq p$, the triple commutator $\left[E_{\beta_{ij}}^*,\left[ E_{\alpha_{kl}}, E_{\gamma_{pq}}^* \right] \right]$
is given by
\[
    \left[E_{\beta_{ij}}^*,\left[ E_{\alpha_{kl}}, E_{\gamma_{pq}}^* \right] \right]
      = \begin{cases}
        \vspace{2mm}
        E_{\nu_{pj}}^* \left[ E_{\beta_{ij}}^*,E_\frac{\nu_{pj}}{2}^* \right],
         &\textnormal{ if }\quad i = p,\\
        I   &\textnormal{ if }\quad i = k \quad\textnormal{ or }\quad i \neq p,
       \end{cases}
   \]
    where $\nu_{pj}= -\gamma_{pq}\alpha_{kl}^*\beta_{ij}$.
 \end{lem}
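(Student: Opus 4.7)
The plan is to mirror the proof of Lemma~\ref{l05} under the duality that interchanges $P$ with $P^*$, which converts elementary generators of type $E$ into those of type $E^*$ and swaps the roles of $\hom_A(Q,P)$ and $\hom_A(Q,P^*)$. The structural inputs needed are Lemma~\ref{l02} (to evaluate the mixed inner commutator), Remark~\ref{r01} (for its inverse), and Lemma~\ref{l03} (to handle a length-$4$ commutator of two star-type generators that arises in the reduction).

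First, since $k \neq p$, Lemma~\ref{l02} gives
\[
[E_{\alpha_{kl}}, E_{\gamma_{pq}}^*] = I - \alpha_{kl}\gamma_{pq}^* + \gamma_{pq}\alpha_{kl}^*,
\]
and Remark~\ref{r01} yields the inverse. Substituting into the outer commutator and using the coordinate-free formulas for $E_{\beta_{ij}}^*$ and $E_{\beta_{ij}}^{*^{-1}}$, I will expand the product of four operators and then collect monomials. A number of compositions vanish for general reasons: $(\alpha_{kl}\gamma_{pq}^*)^2 = (\gamma_{pq}\alpha_{kl}^*)^2 = 0$ because the Kronecker factor $\langle x_k, f_p\rangle$ is $0$ when $k \neq p$, and the mixed products $\alpha_{kl}^*\alpha_{kl}\gamma_{pq}^*$ and $\gamma_{pq}^*\gamma_{pq}\alpha_{kl}^*$ vanish because each ``star-after-nonstar'' piece lands in a $0$ component. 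This reduces the coordinate-free sum to a manageable list, analogous to Equation~\eqref{eqnt6} in Lemma~\ref{l05}.

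Next, I translate this sum into coordinates, computing the action on an arbitrary $(z,x,f)$ and recording the $Q$-, $P$-, and $P^*$-components. The result involves products of Kronecker-type factors $\langle f_i, x_k\rangle$, $\langle x_i, f_p\rangle$, and $\langle f_i, x_p\rangle$ that filter by the overlap of $i$ with $\{k,p\}$. In parallel, I set $\nu_{pj} = -\gamma_{pq}\alpha_{kl}^*\beta_{ij}\in \hom_A(Q,P^*)$ and write out
\[
E_{\nu_{pj}}^* = I + \nu_{pj} - \nu_{pj}^* - \tfrac{1}{2}\nu_{pj}\nu_{pj}^*
\]
in coordinates. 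Applying Lemma~\ref{l03} to the pair $\beta_{ij}, \nu_{pj}/2 \in \hom_A(Q, P^*)$ provides the length-$4$ commutator $\bigl[E_{\beta_{ij}}^*, E_{\nu_{pj}/2}^*\bigr]$, after which the product $E_{\nu_{pj}}^*\bigl[E_{\beta_{ij}}^*, E_{\nu_{pj}/2}^*\bigr]$ is obtained by composing the two explicit expressions.

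Finally, I do the case split on the indices. When $i = p$, the surviving terms of the triple commutator match, coordinate by coordinate, the expression for $E_{\nu_{pj}}^*\bigl[E_{\beta_{ij}}^*, E_{\nu_{pj}/2}^*\bigr]$, yielding the stated formula. When $i = k$ or $i \neq p$, the Kronecker factors in the surviving monomials all collapse to $0$, leaving the identity. The main obstacle is purely computational: tracking roughly a dozen summands across three coordinate slots and verifying the cancellations; however, the argument is entirely parallel to that of Lemma~\ref{l05} via the $P \leftrightarrow P^*$ duality, so no new conceptual ingredient is required.
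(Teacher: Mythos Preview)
Your overall strategy is exactly the paper's: use Lemma~\ref{l02} to evaluate the inner commutator, expand coordinate-free and then in coordinates, introduce $\nu_{pj}$, apply Lemma~\ref{l03} to $[E_{\beta_{ij}}^*,E_{\nu_{pj}/2}^*]$, and compare. However, your case split is backwards, and this is a genuine gap, not just bookkeeping. The coordinate expression for the triple commutator (the analogue of your promised expansion, written out as Equation~\eqref{eqnt8} in the paper) carries the single Kronecker factor $\langle x_k,f_i\rangle$ in every nontrivial term; hence the nontrivial case is $i=k$, not $i=p$. Likewise $\nu_{pj}=-\gamma_{pq}\alpha_{kl}^*\beta_{ij}$ has $\nu_{pj}(z)=-\langle v_{ij},z\rangle\langle f_i,x_k\rangle\langle c_{pq},w_{kl}\rangle f_p$, which vanishes identically unless $i=k$. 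So your assertion that ``when $i=p$, the surviving terms match'' cannot hold: for $i=p$ (hence $i\neq k$) both sides are already $I$, while for $i=k$ the left side is nontrivial and your proof would declare it to be $I$. The printed statement of the lemma contains this typo; the paper's own proof, however, takes Case~(i) as $i=k$ and Case~(ii) as $i=p$ or $i\neq k$, which is what the computation actually forces.

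Your duality heuristic with Lemma~\ref{l05} in fact predicts the correct split. In Lemma~\ref{l05} the outer generator $E_{\alpha_{ij}}$ is non-star and the nontrivial case $i=p$ means $i$ matches the index of the \emph{star} factor $E_{\beta_{pq}}^*$ inside. Swapping $P\leftrightarrow P^*$ turns the outer generator into a star and interchanges star/non-star inside, so in Lemma~\ref{l06} the nontrivial case should be $i$ matching the index of the \emph{non-star} inner factor $E_{\alpha_{kl}}$, i.e.\ $i=k$. Finally, note that the only Kronecker factor that appears here is $\langle x_k,f_i\rangle$; the factors $\langle x_i,f_p\rangle$ and $\langle f_i,x_p\rangle$ you list do not arise in this commutator (they belong to the two-branch situations of Lemmas~\ref{l04} and~\ref{l07}). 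Correct the case labels and your outline coincides with the paper's proof.
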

\begin{proof}
For $\alpha \in \hom_A(Q,P)$, $\beta, \gamma \in \hom_A(Q,P^*)$ and, for $i,j,k,l,p,q$
with $1 \leq i,k,p \leq m$, $1 \leq j,l,q \leq n$ and $k \neq p$, we have the following coordinate-free expression.
\begin{align*}
\left[E_{\beta_{ij}}^* ,\right.&\left. \left[ E_{\alpha_{kl}}, E_{\gamma_{pq}}^* \right]\right](z,x,f)\\
            &= E_{\beta_{ij}}^* \left[ E_{\alpha_{kl}}, E_{\gamma_{pq}}^* \right] E_{\beta_{ij}}^{*^{-1}}
                {\left[ E_{\alpha_{kl}}, E_{\gamma_{pq}}^* \right]}^{-1}(z,x,f) \\
            &=  \Bigl( I  +  \beta_{ij}^* \alpha_{kl}\gamma_{pq}^*  - \gamma_{pq} \alpha_{kl}^* \beta_{ij}   - \frac{1}{2}\gamma_{pq} \alpha_{kl}^* \beta_{ij}\beta_{ij}^* + \frac{1}{2}\beta_{ij}\beta_{ij}^*\alpha_{kl}\gamma_{pq}^* \Bigr.\\
            &\hspace{1cm} \Bigl.  - \frac{1}{2}\gamma_{pq} \alpha_{kl}^*\beta_{ij}\beta_{ij}^*\alpha_{kl}\gamma_{pq}^* \Bigr)(z,x,f) \numberthis{eqnt9}
\end{align*}
Now, by computing using coordinates, we have
\begin{align*}
\left[E_{\beta_{ij}}^* ,\right.&\left. \left[E_{\alpha_{kl}}, E_{\gamma_{pq}}^*\right]\right]\\
            &= \Bigl(z +\langle x,f_p \rangle \langle c_{pq},w_{kl}\rangle \langle x_k,f_i \rangle v_{ij},\; x,\;f-
            \Bigl\{ \langle x,f_p \rangle \langle c_{pq},w_{kl}\rangle \langle x_k,f_i \rangle q(v_{ij}) + \langle  v_{ij},z \rangle   \Bigr.\Bigr.\\
            &\hspace{1cm} \Bigl. 
           +\langle x,f_i \rangle q(v_{ij})  \Bigr\}\langle c_{pq},w_{kl}\rangle \langle x_k,f_i \rangle f_p           \Bigl. +\langle x,f_p \rangle \langle c_{pq},w_{kl}\rangle \langle x_k,f_i \rangle q(v_{ij})f_i\Bigr).
           \numberthis{eqnt8}
\end{align*}
The maps $\nu_{pj}$ in the statement of the lemma, as well as the
other maps $\nu_{pj}^*, \frac{1}{2}\nu_{pj}\nu_{pj}^*$ and the
transformations $E_{\nu_{pj}}^*$ are given as
\begin{align*}
\nu_{pj}(z,x,f) = & -\gamma_{pq}\alpha_{kl}^*\beta_{ij}(z,x,f) = \Bigl(0,0,-\langle v_{ij},z \rangle \langle c_{pq},w_{kl}\rangle \langle f_i,x_p \rangle f_k\Bigr),\\
\nu_{pj}^*(z,x,f) = & -\beta_{ij}^*\alpha_{pq}\gamma_{kl}^*(z,x,f) = \Bigl(-\langle x,f_p \rangle \langle c_{pq}, w_{kl} \rangle \langle f_i,x_k \rangle v_{ij},0,0 \Bigr),\\
\frac{1}{2}\nu_{pj}\nu_{pj}^*(z,x,f) = &\Bigl(0,0,\langle x,f_p \rangle
\langle c_{pq}, w_{kl} \rangle^2 \langle f_i,x_k \rangle^2 q(v_{ij})
f_p\Bigr),\\
E_{\nu_{pj}}^*(z,x,f)
      =\;& \Bigl( I + \nu_{pj} - \nu_{pj}^* - \frac{1}{2}\nu_{pj} \nu_{pj}^* \Bigr)(z,x,f)\\
      =\;& \Bigl(z + \langle x,f_p \rangle \langle c_{pq}, w_{kl} \rangle \langle x_k,f_i \rangle v_{ij},\;x ,\; f - \langle v_{ij},z \rangle \langle c_{pq},w_{kl}\rangle \langle x_k,f_i \rangle f_p\Bigr.\\
      &\Bigl. - \langle x,f_p \rangle \langle c_{pq}, w_{kl} \rangle^2 \langle x_k,f_i \rangle^2 q(v_{ij}) f_p\Bigr).
\end{align*}
If $i\neq p$, then, by Lemma~\ref{l03}, we have
\begin{align*}
    \left[E_{\beta_{ij}}^*,E_\frac{\nu_{pj}}{2}^* \right] (z,x,f)
          =\;& \Bigl(I + \frac{1}{2} \nu_{pj} \beta_{ij}^* - \frac{1}{2} \beta_{ij} \nu_{pj}^* \Bigr)(z,x,f)\\
          =\;&\Bigl(z,\; x, \;f + \langle x,f_p \rangle \langle c_{pq}, w_{kl} \rangle\langle x_k,f_i \rangle q(v_{ij}) f_i\\
          \;&\;- \langle x,f_i \rangle \langle c_{pq},w_{kl}\rangle \langle x_k,f_i \rangle q(v_{ij})f_p\Bigr)
\end{align*}
and hence we get
\begin{align*}
 E_{\nu_{pj}}^* \left[ E_{\beta_{ij}}^*,E_\frac{\nu_{pj}}{2}^* \right] (z,x,f)
 &=\Bigl( I + \nu_{pj} - \nu_{pj}^* - \frac{1}{2}\nu_{pj} \nu_{pj}^*+ \frac{1}{2} \nu_{pj} \beta_{ij}^* - \frac{1}{2} \beta_{ij} \nu_{pj}^* \Bigr)(z,x,f)\\
 &= \Bigl( z + \langle x,f_p \rangle \langle c_{pq},w_{kl}\rangle \langle x_k,f_i \rangle v_{ij},\; x, f- \Bigl\{ \langle x,f_i \rangle  q(v_{ij}\Bigr.\Bigr.  \\
 &\quad  +  \langle  v_{ij},z \rangle+\langle x,f_p \rangle \langle c_{pq},w_{kl}\rangle \langle x_k,f_i \rangle q(v_{ij}) \Bigr\}\langle c_{pq},w_{kl}\rangle  \\
        &\quad \langle x_k,f_i \rangle f_p + \langle x,f_p \rangle \langle c_{pq},w_{kl}\rangle \langle x_k,f_i \rangle q(v_{ij})f_i\Bigr).
 \numberthis{eqnt10}
\end{align*}

\medskip

We now consider the following possible conditions on the indices.

     \smallskip

\noindent\textbf{Case(i):} $i = k$.

  \smallskip

      \noindent If $i = k$, then, by Equations~\eqref{eqnt8}, \eqref{eqnt9} and \eqref{eqnt10}, we have
  \begin{align*}
      \left[E_{\beta_{ij}}^* ,\right.& \left.\left[ E_{\alpha_{kl}}, E_{\gamma_{pq}}^* \right]\right](z,x,f)\\
    &=  \Bigl( I  +  \beta_{ij}^* \alpha_{kl}\gamma_{pq}^*  - \gamma_{pq} \alpha_{kl}^* \beta_{ij}  - \frac{1}{2}\gamma_{pq} \alpha_{kl}^* \beta_{ij}\beta_{ij}^*+ \frac{1}{2}\beta_{ij}\beta_{ij}^*\alpha_{kl}\gamma_{pq}^* \Bigr.\\
    & \hspace{1cm}  \Bigl.   - \frac{1}{2}\gamma_{pq} \alpha_{kl}^*\beta_{ij}\beta_{ij}^*\alpha_{kl}\gamma_{pq}^* \Bigr)(z,x,f)\\
    &=  \Bigl( z + \langle x,f_p \rangle \langle c_{pq},w_{kl}\rangle  v_{ij},\; x,\;f - \langle  v_{ij},z \rangle \langle c_{pq},w_{kl}\rangle f_p+\langle x,f_p \rangle \langle c_{pq},w_{kl}\rangle  q(v_{ij})f_i\bigr.\\
    &\hspace{1cm}  -\langle x,f_p \rangle \langle c_{pq},w_{kl}\rangle^2  q(v_{ij})f_p \Bigl. - \langle x,f_i \rangle \langle c_{pq},w_{kl}\rangle  q(v_{ij})f_p\Bigr)\\
    &= E_{\nu_{pj}}^* \left[ E_{\beta_{ij}}^*,E_\frac{\nu_{pj}}{2}^* \right] (z,x,f).
\end{align*}
\noindent \textbf{Case(ii):} $i = p$ or $i \neq k$.

    \smallskip

    \noindent If $i = k$ or $i \neq p$, then, by Equation~\eqref{eqnt5}, we have
    \[
    \left[E_{\beta_{ij}}^* , \left[ E_{\alpha_{kl}}, E_{\gamma_{pq}}^* \right]\right](z,x,f) = I(z,x,f).\qedhere
    \]
\end{proof}
The set of commutator relations we deduce from the above lemma is given in
the corollary below.

\begin{cor}\label{c06}
 For any given $i,j,k,l,p,q$, where $1 \leq i,k,p \leq m$, $1 \leq j,l,q \leq n$ 
such that $i \neq k$ and $k \neq p$ and $a,b,c,d,e,f \in A$, $\left[ E_{a\beta_{ij}}^*,\left[ E_{b\gamma_{kl}}^*,E_{c\alpha_{pq}} \right]\right]= \left[ E_{d\beta_{ij}}^*,\left[ E_{e\gamma_{kl}}^*,E_{f\alpha_{pq}} \right]\right]$ if $abc=def$ and $a^2bc=d^2ef$.
\end{cor}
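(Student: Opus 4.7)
The plan is to mimic the proof of Corollary~\ref{c04}: perform a direct coordinate-free expansion of both sides and compare coefficients in $a,b,c$ versus $d,e,f$. First, I would derive the analogue of Equation~\eqref{eqnt9} for the triple commutator $\bigl[E_{a\beta_{ij}}^{*}, [E_{b\gamma_{kl}}^{*}, E_{c\alpha_{pq}}]\bigr]$ under the standing hypotheses $i \neq k$ and $k \neq p$. The computation proceeds exactly as in the proof of Lemma~\ref{l06} with the roles of $\alpha$ and $\gamma^{*}$ interchanged in the inner commutator; by Lemma~\ref{l02} the inner commutator simplifies to $I + bc\,\bigl(\alpha_{pq}\gamma_{kl}^{*} - \gamma_{kl}\alpha_{pq}^{*}\bigr)$ under $k \neq p$ (its higher-order correction vanishes because the relevant pairings $\langle x_{p},f_{k}\rangle$ are zero), and the triple commutator then expands into a handful of specific compositional monomials in the generators $\beta_{ij}, \beta_{ij}^{*}, \gamma_{kl}, \gamma_{kl}^{*}, \alpha_{pq}, \alpha_{pq}^{*}$.

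The crucial observation, mirroring the proof of Corollary~\ref{c04}, is that the multidegree in $(\#\beta,\#\gamma,\#\alpha)$ of each such monomial is forced to be $(1,1,1)$, $(2,1,1)$, or $(2,2,2)$, so the accompanying scalar coefficient is always one of $abc$, $a^{2}bc$, or $a^{2}b^{2}c^{2}$. Since the hypotheses $abc=def$ and $a^{2}bc=d^{2}ef$ automatically yield $a^{2}b^{2}c^{2}=(abc)^{2}=(def)^{2}=d^{2}e^{2}f^{2}$, every monomial coefficient on the left matches that on the right, and the identity follows.

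The main obstacle is rigorously verifying the restricted list of multidegrees. A careless expansion of the four-fold product $E_{a\beta_{ij}}^{*}\, [E_{b\gamma_{kl}}^{*}, E_{c\alpha_{pq}}]\, (E_{a\beta_{ij}}^{*})^{-1}\, [E_{b\gamma_{kl}}^{*}, E_{c\alpha_{pq}}]^{-1}$ could seemingly produce intermediate terms of mixed multidegree such as $(1,2,1)$ or $(2,2,1)$; but the same cancellations that reduce Equation~\eqref{eqnt9} to only six terms must cut the present expansion down to the three multidegrees above, essentially because $(E_{a\beta_{ij}}^{*})^{-1}$ differs from $E_{a\beta_{ij}}^{*}$ only in the signs of the first-order pieces while the $\beta_{ij}\beta_{ij}^{*}$ part is identical. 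Once this clean expansion is in hand, replacing $a,b,c$ by $d,e,f$ gives the claimed equality immediately.
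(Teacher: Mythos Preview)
Your proposal is correct and follows essentially the same approach as the paper. The paper leaves the proof of Corollary~\ref{c06} implicit, deducing it from Lemma~\ref{l06} in exactly the way it writes out explicitly for Corollary~\ref{c04}: after adapting Equation~\eqref{eqnt9} to the inner commutator $[E_{b\gamma_{kl}}^{*},E_{c\alpha_{pq}}]$ (which under $k\neq p$ equals $I+bc(\alpha_{pq}\gamma_{kl}^{*}-\gamma_{kl}\alpha_{pq}^{*})$), one obtains the six-term expansion
\[
I - abc\,\beta_{ij}^{*}\alpha_{pq}\gamma_{kl}^{*} + abc\,\gamma_{kl}\alpha_{pq}^{*}\beta_{ij}
+\tfrac{a^{2}bc}{2}\gamma_{kl}\alpha_{pq}^{*}\beta_{ij}\beta_{ij}^{*}
-\tfrac{a^{2}bc}{2}\beta_{ij}\beta_{ij}^{*}\alpha_{pq}\gamma_{kl}^{*}
-\tfrac{a^{2}b^{2}c^{2}}{2}\gamma_{kl}\alpha_{pq}^{*}\beta_{ij}\beta_{ij}^{*}\alpha_{pq}\gamma_{kl}^{*},
\]
whose scalar coefficients are precisely $abc$, $a^{2}bc$, and $(abc)^{2}$, so the hypotheses $abc=def$ and $a^{2}bc=d^{2}ef$ give the result.
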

Finally, another triple commutator is computed in the following lemma and the commutator relations which follow from this are stated in the corollary below this lemma.

\begin{lem}\label{l07}
Let $\alpha \in \hom_A(Q,P)$ and $\beta, \gamma \in \hom_A(Q,P^*)$. Then, for $i,j,k,l,p,q$
with $1 \leq i,k,p \leq m$, $1 \leq j,l,q \leq n$ and $k \neq p$, the triple
commutator $\left[ E_{\alpha_{ij}},\left[ E_{\beta_{kl}}^*,E_{\gamma_{pq}}^* \right] \right]$
is given by
\[
          \left[ E_{\alpha_{ij}},\left[ E_{\beta_{kl}}^*,E_{\gamma_{pq}}^* \right]\right]
          =     \begin{cases}
            \vspace{2mm}
            E_{\eta_{kj}}^* \left[E_{\alpha_{ij}},E_\frac{\eta_{kj}}{2}^*\right]
            &\textnormal{if}\quad i = p,\\
            E_{\vartheta_{pj}}^*\left[ E_{\alpha_{ij}},E_\frac{\vartheta_{pj}}{2}^* \right]
            &\textnormal{if}\quad i = k,\\
            I & \textnormal{if}\quad  i \neq p \textnormal{ and } i \neq k,
             \end{cases}
         \]
 where $\eta_{kj}=\beta_{kl}\gamma_{pq}^*\alpha_{ij}$ and $\vartheta_{pj} = \gamma_{pq}\beta_{kl}^*\alpha_{ij}$.
\end{lem}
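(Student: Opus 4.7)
The plan is to mirror the approach of Lemma \ref{l04}, with the roles of $E$-type and $E^*$-type generators exchanged: here the inner bracket is of two $E^*$'s and the outer generator is an $E$, whereas in Lemma \ref{l04} the inner bracket was of two $E$'s and the outer was an $E^*$. By Lemma \ref{l03}, the inner commutator collapses to
\[
[E_{\beta_{kl}}^*, E_{\gamma_{pq}}^*] = I + \gamma_{pq}\beta_{kl}^* - \beta_{kl}\gamma_{pq}^*,
\]
with inverse $I - \gamma_{pq}\beta_{kl}^* + \beta_{kl}\gamma_{pq}^*$; each is a small modification that fixes the $P$-coordinate and only mixes $Q$ with $P^*$.

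I would then expand the outer triple commutator coordinate-free as
\[
E_{\alpha_{ij}}\,[E_{\beta_{kl}}^*, E_{\gamma_{pq}}^*]\,E_{\alpha_{ij}}^{-1}\,[E_{\beta_{kl}}^*, E_{\gamma_{pq}}^*]^{-1}
\]
polynomially in $\alpha_{ij}, \alpha_{ij}^*, \beta_{kl}, \beta_{kl}^*, \gamma_{pq}, \gamma_{pq}^*$, collecting the surviving monomials after the large cancellations that occur in the inner pair and against $E_{\alpha_{ij}}E_{\alpha_{ij}}^{-1}$. The auxiliary maps $\eta_{kj} = \beta_{kl}\gamma_{pq}^*\alpha_{ij}$ and $\vartheta_{pj} = \gamma_{pq}\beta_{kl}^*\alpha_{ij}$ will emerge as the natural surviving quartic monomials, just as $\lambda_{kj}$ and $\xi_{pj}$ did in Lemma \ref{l04}. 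In parallel, I would record $\eta_{kj}, \eta_{kj}^*, \frac12\eta_{kj}\eta_{kj}^*$ in coordinates to obtain $E_{\eta_{kj}}^* = I + \eta_{kj} - \eta_{kj}^* - \frac12\eta_{kj}\eta_{kj}^*$, and then compute $[E_{\alpha_{ij}}, E_{\eta_{kj}/2}^*]$ from the mixed formula of Lemma \ref{l02}; the same is done for $\vartheta_{pj}$.

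The case analysis proceeds by specializing indices. If $i = p$, then $\langle x_i, f_p\rangle = 1$ while $\langle x_i, f_k\rangle = 0$ (using $k \neq p$), so only the $\eta_{kj}$-family survives, and one matches the expanded triple commutator termwise with $E_{\eta_{kj}}^*[E_{\alpha_{ij}}, E_{\eta_{kj}/2}^*]$. If $i = k$, the pairings swap roles and the expression reduces to $E_{\vartheta_{pj}}^*[E_{\alpha_{ij}}, E_{\vartheta_{pj}/2}^*]$. If $i \neq p$ and $i \neq k$, both $\langle x_i, f_p\rangle$ and $\langle x_i, f_k\rangle$ vanish, every surviving term is zero, and the commutator reduces to $I$.

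The main obstacle is the bookkeeping: the coordinate expansion produces many terms involving products of three bilinear pairings together with factors $q(v_{kl})$, $q(c_{pq})$, $q(w_{ij})$, and verifying the precise cancellations demands careful use of the symmetry of $\langle\,\cdot\,,\,\cdot\,\rangle$ and the identities $q(v_{kl}) = \frac12\langle v_{kl}, v_{kl}\rangle$ (and analogously for $c_{pq}$, $w_{ij}$). Following the convention announced after Corollary \ref{c01} of reporting only the final expressions, I would suppress most intermediate computation and rely on the structural parallel with Lemma \ref{l04} to confirm that the three listed cases exhaust the behaviour of the commutator.
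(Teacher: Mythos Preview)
Your proposal is correct and follows essentially the same approach as the paper: collapse the inner bracket via Lemma~\ref{l03}, expand the triple commutator both coordinate-free and in coordinates, write out $\eta_{kj},\eta_{kj}^*,\tfrac12\eta_{kj}\eta_{kj}^*$ and $E_{\eta_{kj}}^*$ (and likewise for $\vartheta_{pj}$), invoke Lemma~\ref{l02} for the mixed bracket $[E_{\alpha_{ij}},E_{\eta_{kj}/2}^*]$, and then split on $i=p$, $i=k$, or neither. One small correction to your bookkeeping remark: only $q(w_{ij})$ actually occurs in the expansion, not $q(v_{kl})$ or $q(c_{pq})$, because the inner commutator $I+\gamma_{pq}\beta_{kl}^*-\beta_{kl}\gamma_{pq}^*$ has already eliminated the quadratic-form contributions from the $E^*$-generators.
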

\begin{proof}
For $\alpha \in \hom_A(Q,P)$, $\beta, \gamma \in \hom_A(Q,P^*)$ and for $i,j,k,l,p,q$ with
$1 \leq i,k,p \leq m$, $1 \leq j,l,q \leq n$ and $k \neq p$, we have
\begin{align*}
 \left[ E_{\beta_{kl}}^*, E_{\gamma_{pq}}^* \right](z,x,f)
 &= \Bigl(I+\gamma_{pq}\beta_{kl}^*-\beta_{kl}\gamma_{kl}^* \Bigr)(z,x,f)\\
 &= \Bigl( z,\;x,\;f+\langle x,f_k \rangle \langle c_{pq},v_{kl} \rangle f_p - \langle x,f_p \rangle \langle v_{kl}, c_{pq}\rangle f_k \Bigr).\\
& \hspace{6cm}(\textnormal{by Lemma}\;\ref{l03})\\
\left[ E_{\beta_{kl}}^*,E_{\gamma_{pq}}^* \right]^{-1}(z,x,f)
&= \left[E_{\gamma_{pq}}^*,E_{\beta_{kl}}^* \right](z,x,f)
= \Bigl( I-\gamma_{pq}\beta_{kl}^*+\beta_{kl}\gamma_{pq}^* \Bigr)(z,x,f)\\
&= \Bigl(z,\;x,\;f-\langle x,f_k \rangle \langle c_{pq},v_{kl} \rangle f_p + \langle x,f_p \rangle \langle v_{kl}, c_{pq}\rangle f_k\Bigr).\\\
& \hspace{6cm}(\textnormal{by Lemma}\;\ref{l03})
\end{align*}
Hence we get
\begin{align*}
        \left[ E_{\alpha_{ij}},\right.&\left. \left[E_{\beta_{kl}}^*,E_{\gamma_{pq}}^* \right] \right] (z,x,f)\\ 
              &=\Bigl(I + \alpha_{ij}^*\beta_{kl}\gamma_{pq}^*
              -\alpha_{ij}^*\gamma_{pq}\beta_{kl}^*
              + \frac{1}{2}\gamma_{pq}\beta_{kl}^*\alpha_{ij}\alpha_{ij}^*\gamma_{pq}\beta_{kl}^*
              - \frac{1}{2}\gamma_{pq}\beta_{kl}^*\alpha_{ij}\alpha_{ij}^*\beta_{kl}\gamma_{pq}^*
              \Bigr.\\
              &\hspace{1cm}
              + \frac{1}{2}\alpha_{ij}\alpha_{ij}^*\beta_{kl}\gamma_{pq}^*
                   - \frac{1}{2}\alpha_{ij}\alpha_{ij}^*\gamma_{pq}\beta_{kl}^*
              +\beta_{kl}\gamma_{pq}^*\alpha_{ij}
              - \frac{1}{2}\gamma_{pq}\beta_{kl}^*\alpha_{ij}\alpha_{ij}^*-\gamma_{pq}\beta_{kl}^*\alpha_{ij}\\
              &\hspace{1cm} - \frac{1}{2}\beta_{kl}\gamma_{pq}^*\alpha_{ij}\alpha_{ij}^*\gamma_{pq}\beta_{kl}^*
              +\frac{1}{2}\beta_{kl}\gamma_{pq}^*\alpha_{ij}\alpha_{ij}^*
               \Bigl.
              +\frac{1}{2}\beta_{kl}\gamma_{pq}^*\alpha_{ij}\alpha_{ij}^*\beta_{kl}\gamma_{pq}^*
              \Bigr)(z,x,f).
             \numberthis{eqnt12}
\end{align*}
Computing with coordinates, we get
\begin{align*}
        \left[ E_{\alpha_{ij}},\right.&\left. \left[ E_{\beta_{kl}}^*,E_{\gamma_{pq}}^* \right] \right] (z,x,f)\\
              &= \Bigl(z +
                  \Bigl\{
                     \langle x,f_p \rangle \langle x_i,f_k \rangle
                     - \langle x,f_k \rangle \langle x_i,f_p \rangle
                  \Bigr\}\langle c_{pq},v_{kl} \rangle w_{ij},x+\Bigl\{
                      \langle x,f_p \rangle  \langle x_i,f_k \rangle
               \Bigr.\\
                 &\hspace{1cm}  \Bigl.
                       - \langle x,f_k \rangle  \langle x_i,f_p \rangle
                     \Bigr\}\langle c_{pq},v_{kl} \rangle   
               q(w_{ij})x_i,\; f
                +\Bigl\{
                \langle  w_{ij},z \rangle+ \langle f,x_i \rangle q(w_{ij})
                \Bigr.
              q(w_{ij})\\
                  &\hspace{1cm}
                  -\langle x,f_k \rangle \langle c_{pq},v_{kl} \rangle \langle x_i,f_p \rangle
                  -\langle x,f_p \rangle \langle c_{pq},v_{kl} \rangle \langle x_i,f_k \rangle  \Bigl. q(w_{ij}) \Bigr\}\langle x_i,f_p \rangle\langle c_{pq},v_{kl} \rangle f_k           
              \\
              &\hspace{1cm} +\Bigl\{
              \langle x,f_k \rangle \langle c_{pq},v_{kl} \rangle
                 \Bigr.
              \langle x_i,f_p \rangle q(w_{ij})
               -\langle  w_{ij},z \rangle
               -\langle x,f_p \rangle \langle c_{pq},v_{kl} \rangle  \langle x_i,f_k \rangle  q(w_{ij})\\
               &\hspace{1cm}\Bigl.\Bigl.-\langle f,x_i \rangle q(w_{ij})\Bigr\} \langle x_i,f_k \rangle \langle c_{pq},v_{kl} \rangle f_p
              \Bigr). \numberthis{eqnt11}
\end{align*}
The transformations $\eta_{kj},\eta_{kj}^*,\frac{1}{2}\eta_{kj}\eta_{kj}^*$ and $E_{\eta_{kj}}^*$ are given by
\begin{align*}
\eta_{kj}(z,x,f) = &\;\beta_{kl}\gamma_{pq}^*\alpha_{ij}(z,x,f) = \Bigl(0,0,\langle w_{ij},z \rangle \langle c_{pq},v_{kl}\rangle \langle f_p,x_i \rangle f_k\Bigr),\\
\eta_{kj}^*(z,x,f) = &\;\alpha_{ij}^*\gamma_{pq}\beta_{kl}^*(z,x,f) = \Bigl(\langle x,f_k \rangle \langle c_{pq}, v_{kl} \rangle \langle f_p,x_i \rangle w_{ij},0,0 \Bigr),\\
\frac{1}{2}\eta_{kj}\eta_{kj}^*(z,x,f) = &\Bigl(0,0,\langle x,f_k \rangle
\langle c_{kl}, w_{pq} \rangle^2 \langle f_p,x_i \rangle^2 q(w_{ij})
f_k \Bigr),\\
 E_{\eta_{kj}}^*(z,x,f) = &\;\Bigl(
                    z - \langle x,f_k \rangle \langle c_{pq}, v_{kl} \rangle \langle x_i,f_p \rangle w_{ij}, x ,\; f+ \langle w_{ij},z \rangle \langle c_{pq},v_{kl}\rangle \langle x_i,f_p \rangle f_k
                  \Bigr.\\
              &\;\Bigl.
                    - \langle x,f_k \rangle \langle c_{pq}, v_{kl} \rangle^2 \langle x_i,f_p \rangle^2 q(w_{ij}) f_k
                  \Bigr).
\end{align*}
If $i \neq k$, then, by Lemma~\ref{l02}, we have
\begin{align*}
  [E_{\alpha_{ij}},E_\frac{\eta_{kj}}{2}^*](z,x,f)
  =\;&\Bigl( I +\frac{1}{2}\eta_{kj}\alpha_{ij}^* - \frac{1}{2}\alpha_{ij}\eta_{kj}^*\Bigr)(z,x,f)\\
  =\;&\Bigl(z,\; x- \langle x,f_k \rangle \langle c_{pq}, v_{kl} \rangle\langle x_i,f_p \rangle q(w_{ij}) x_i,\Bigr.\\
  &\hspace{1cm}\Bigl.f+ \langle f,x_i \rangle \langle c_{pq},v_{kl}\rangle \langle x_i,f_p \rangle q(w_{ij})f_k\Bigr)
\end{align*}
and hence we get
\begin{align*}
 E_{\eta_{kj}}^*&[E_{\alpha_{ij}},E_\frac{\eta_{kj}}{2}^*] (z,x,f)\\
      &= \Bigl( I +\eta_{kj}-\eta_{kj}^*-\frac{1}{2}\eta_{kj}\eta_{kj}^*+\frac{1}{2}\eta_{kj}\alpha_{ij}^* - \frac{1}{2}\alpha_{ij}\eta_{kj}^*\Bigr)(z,x,f)\\
        &= \Bigl(z - \langle x,f_k \rangle \langle c_{pq},v_{kl} \rangle \langle x_i,f_p \rangle w_{ij},\;x-\langle x,f_k \rangle \langle c_{pq},v_{kl}\rangle\langle x_i,f_p \rangle q(w_{ij})x_i,\; f +\Bigl\{
          \langle  w_{ij},z \rangle\Bigr.\Bigr.\\
          &\hspace{1cm}  
            +\langle f,x_i \rangle  q(w_{ij}) -\langle x,f_k \rangle \Bigl. \langle c_{pq},v_{kl} \rangle \langle x_i,f_p \rangle q(w_{ij})
          \Bigr\}
          \langle c_{pq},v_{kl} \rangle \langle x_i,f_p \rangle f_k
          \Bigr).
          \numberthis{eqnt13}
\end{align*}
Similarly, if $i \neq p$, then we have
      \begin{align*}
          E_{\vartheta_{pj}}^*&\left[E_{\alpha_{ij}},E_\frac{\vartheta_{pj}}{2}^* \right] (z,x,f)\\
           &= \Bigl ( I + \vartheta_{pj} - \vartheta_{pj}^* - \frac{1}{2}\vartheta_{pj}\vartheta_{pj}^* - \frac{1}{2} \alpha_{ij} \vartheta_{pj}^* + \frac{1}{2} \vartheta_{pj} \alpha_{ij}^*   \Bigr)(z,x,f)\\
           &= \Bigl( z +\langle x,f_p \rangle \langle c_{pq},v_{kl}  \rangle \langle x_i, f_k \rangle w_{ij}, \; x - \langle x,f_p \rangle \langle c_{pq},v_{kl} \rangle\langle x_i, f_k \rangle q(w_{ij})x_i,\; f  - \Bigl\{\langle  w_{ij},z \rangle \\
      & \hspace{1cm}  - \langle f,x_i \rangle  q(w_{ij}) -\langle x,f_p \rangle   \langle c_{pq},v_{kl} \rangle \langle x_i, f_k \rangle  q(w_{ij})\Bigr\} \langle x_i, f_k \rangle\langle c_{pq},v_{kl} \rangle f_p\Bigr).
       \numberthis{eqnt14}
      \end{align*}
We now consider the following possible conditions on the indices.

     \smallskip

\noindent\textbf{Case(i):} $i = p$.

  \smallskip

      \noindent If $i = p$, then, by Equations~\eqref{eqnt11},~\eqref{eqnt12}, and~\eqref{eqnt13}, we have
  \begin{align*}
      \left[E_{\alpha_{ij}},\right.&\left.\left[ E_{\beta_{kl}}^*,E_{\gamma_{pq}}^* \right]\right](z,x,f)\\
      &=\Bigl(I -\alpha_{pj}^*\gamma_{pq}\beta_{kl}^*
                    - \frac{1}{2}\alpha_{pj}\alpha_{pj}^*\gamma_{pq}\beta_{kl}^*
                    +\beta_{kl}\gamma_{pq}^*\alpha_{pj}+\frac{1}{2}\beta_{kl}\gamma_{pq}^*\alpha_{pj}\alpha_{pj}^*
        \Bigr.\\
              &\hspace{1cm}
              \Bigl.
              - \frac{1}{2}\beta_{kl}\gamma_{pq}^*\alpha_{pj}\alpha_{pj}^*\gamma_{pq}\beta_{kl}^*
              \Bigr)(z,x,f)\\
              &= \Bigl(z - \langle x,f_k \rangle \langle c_{pq},v_{kl} \rangle w_{pj},\;x-\langle x,f_k \rangle \langle c_{pq},v_{kl} \rangle q(w_{pj})x_p, \Bigr. \\
            & \hspace{1cm} \Bigl. f + \Bigl\{\langle  w_{pj},z \rangle   
            +\langle f,x_p \rangle  q(w_{pj}) 
            -\langle x,f_k \rangle \langle c_{pq},v_{kl} \rangle  q(w_{pj})\Bigr\}\langle c_{pq},v_{kl} \rangle f_k\Bigr)\\
            &= E_{\eta_{kj}}^* \left[ E_{\alpha_{ij}},E_\frac{\eta_{kj}}{2}^* \right] (z,x,f).
   \end{align*}
   \noindent \textbf{Case(ii):} $i = k$.

    \smallskip

    \noindent If $i = k$, then, by Equations~\eqref{eqnt11},~\eqref{eqnt12}, and~\eqref{eqnt14}, we have
    \begin{align*}
    \left[E_{\alpha_{ij}},\right.&\left[E_{\beta_{kl}}^*,E_{\gamma_{pq}}^*\left.\right]\right](z,x,f)\\
    &= \Bigl(I -\gamma_{pq}\beta_{kl}^*\alpha_{kj}
                + \alpha_{kj}^*\beta_{kl}\gamma_{pq}^* + \frac{1}{2}\alpha_{kj}\alpha_{kj}^*\beta_{kl}\gamma_{pq}^*- \frac{1}{2}\gamma_{pq}\beta_{kl}^*\alpha_{kj}\alpha_{kj}^*
          \Bigr.\\
              &\hspace{1cm} \Bigl.
                          - \frac{1}{2}\gamma_{pq}\beta_{kl}^*\alpha_{kj}\alpha_{kj}^*\beta_{kl}\gamma_{pq}^*
                   \Bigr)(z,x,f)\\
    =\;& \Bigl( z +\langle x,f_p \rangle \langle c_{pq},v_{kl}  \rangle w_{kj}, \; x+  \langle x,f_p \rangle\langle c_{pq},v_{kl} \rangle q(w_{kj})x_k,\\
      & \hspace{1cm} f  - \Bigl\{\langle  w_{kj},z \rangle  + \langle f,x_k \rangle q(w_{kj}) +
      \langle x,f_p \rangle \langle c_{pq},v_{kl} \rangle  q(w_{kj})\Bigr\}  \langle c_{pq},v_{kl} \rangle f_p\Bigr).
    \end{align*}
    \noindent \textbf{Case(iii):} $i \neq p$ and $i \neq k$.

    \smallskip

    \noindent If $i \neq p$, then, by Equation~\eqref{eqnt11}, we have
    \[
    \left[E_{\alpha_{ij}},\left[E_{\beta_{kl}}^*,E_{\gamma_{pq}}^*\right]\right](z,x,f)
    = I(z,x,f).\qedhere
    \]
\end{proof}
\begin{cor}\label{c07}
 For any $i,j,k,l,p,q$ with $1 \leq i,k,p \leq m$, $1 \leq j,l,q \leq n$,
$i \neq k$ and $k \neq p$ and $a,b,c,d,e,f \in A$ with $abc=def$ and $a^2bc=d^2ef$, the following equation holds.
\[ \left[E_{a\alpha_{ij}},\left[E_{b\beta_{kl}}^*,E_{c\gamma_{pq}}^*\right]\right]=\left[E_{d\alpha_{ij}},\left[E_{e\beta_{kl}}^*,E_{f\gamma_{pq}}^*\right]\right]. \]
\end{cor}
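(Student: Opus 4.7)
The plan is to apply Lemma~\ref{l07} with the morphisms $\alpha, \beta, \gamma$ replaced by the scaled morphisms $a\alpha, b\beta, c\gamma$ on one side and $d\alpha, e\beta, f\gamma$ on the other, and then to compare the resulting expressions coefficient by coefficient, exactly in the spirit of the proofs of Corollaries~\ref{c04},~\ref{c05}, and~\ref{c06}. Since the hypothesis $i \neq k$ is assumed, Lemma~\ref{l07} leaves exactly two possibilities for the index pattern: either $i = p$, in which case the triple commutator is the explicit expression involving $E^*_{\eta_{kj}}$ coming from the expansion~\eqref{eqnt12}; or $i \neq p$ (and $i \neq k$), in which case both sides reduce to the identity and the corollary is immediate. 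I would therefore focus on the case $i = p$.

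Inspecting the expansion~\eqref{eqnt12}, I observe that every summand contains at most two factors from $\{\alpha_{ij}, \alpha_{ij}^*\}$, at most two factors from $\{\beta_{kl}, \beta_{kl}^*\}$, and at most two factors from $\{\gamma_{pq}, \gamma_{pq}^*\}$. After the rescaling $\alpha \mapsto a\alpha$, $\beta \mapsto b\beta$, $\gamma \mapsto c\gamma$, each term acquires a scalar $a^{\epsilon_1} b^{\epsilon_2} c^{\epsilon_3}$ in which the exponents record the number of factors of each type. A direct inspection shows that only three coefficient patterns arise: $abc$ (for instance from $\alpha_{ij}^*\gamma_{pq}\beta_{kl}^*$ and $\beta_{kl}\gamma_{pq}^*\alpha_{ij}$), $a^2bc$ (from terms carrying an extra $\alpha_{ij}\alpha_{ij}^*$, such as $\beta_{kl}\gamma_{pq}^*\alpha_{ij}\alpha_{ij}^*$), and $a^2b^2c^2$ (from the sextuple term $\beta_{kl}\gamma_{pq}^*\alpha_{ij}\alpha_{ij}^*\gamma_{pq}\beta_{kl}^*$).

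The analogous substitution $\alpha \mapsto d\alpha$, $\beta \mapsto e\beta$, $\gamma \mapsto f\gamma$ produces the coefficients $def$, $d^2ef$, and $d^2e^2f^2$. The hypotheses $abc = def$ and $a^2bc = d^2ef$ immediately match the first two patterns, while the third, $a^2b^2c^2 = d^2e^2f^2$, follows by squaring the first: $(abc)^2 = (def)^2$. Thus the two sides of the claimed identity agree term by term.

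The main (though mild) obstacle will be the bookkeeping needed to confirm that every summand in the lengthy expansion~\eqref{eqnt12} falls into one of the three monomial patterns above; in particular one must be careful that no summand of mixed homogeneity (e.g.\ $a b c^2$ or $a b^2 c$) appears. Since Lemma~\ref{l07} has already enumerated all the terms explicitly, this verification is purely combinatorial and requires no further computation with coordinates.
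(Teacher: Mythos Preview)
Your proposal is correct and follows essentially the same approach as the paper's explicit proof of Corollary~\ref{c04} (the paper does not spell out the proof of Corollary~\ref{c07}, leaving it as an analogous consequence). The only cosmetic difference is that you analyze the full coordinate-free expansion~\eqref{eqnt12} directly, while the paper's template in Corollary~\ref{c04} works with the already-simplified Case~(i) expression; since your homogeneity count (only $abc$, $a^2bc$, $a^2b^2c^2$ occur) is easily verified term by term on~\eqref{eqnt12}, this distinction is immaterial.
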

\section{Multiple Commutators}
 In this section, we establish some four-fold commutator formulae using coordinate-free method.
 
 \begin{lem}\label{l08}
  Let $\alpha \in \hom_A(Q,P)$ and $\beta,\gamma, \mu \in \hom_A(Q,P^*)$. Then, for $i,j,k,l,r,s,p,q$ with $1 \leq i,k,r,p \leq m$, $1 \leq j,l,s,q \leq n$, $i \neq k$ and $r \neq p$, the four-fold commutator
  $\Bigl[[ E_{\beta_{ij}}^*,E_{\gamma_{kl}}^* ], [ E_{\alpha_{rs}}, E_{\mu_{pq}}^* ]\Bigr]$ is given by
  \[
    \left[[E_{\beta_{ij}}^*,E_{\gamma_{kl}}^*], [E_{\alpha_{rs}}, E_{\mu_{pq}}^*]\right]
    =   \begin{cases}
    \vspace{2mm}
    \left[ E_{\mu_{pq}\alpha_{rs}^*}^*, E_{\beta_{ij}\gamma_{kl}^*}^*   \right]
    &\textnormal{if}\quad  k=r,\\
      \vspace{2mm}
    \left[ E_{\gamma_{kl}\beta_{ij}^*}^*,E_{\mu_{pq}\alpha_{rs}^*}^* \right]
    &\textnormal{if}\quad  i = r,\\
    \vspace{2mm}
     I & \textnormal{ otherwise }. \\
           \end{cases}
  \]
 \end{lem}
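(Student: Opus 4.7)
The plan is to start by reducing each inner commutator to the simplest possible form using the results already proved. Under the standing hypotheses $i \neq k$ and $r \neq p$, Lemma~\ref{l03} gives
\[
  C_1 := [E_{\beta_{ij}}^*, E_{\gamma_{kl}}^*] = I + \gamma_{kl}\beta_{ij}^* - \beta_{ij}\gamma_{kl}^*,
\]
and Lemma~\ref{l02} gives
\[
  C_2 := [E_{\alpha_{rs}}, E_{\mu_{pq}}^*] = I - \alpha_{rs}\mu_{pq}^* + \mu_{pq}\alpha_{rs}^*.
\]
A direct check, using that the relevant ``dual'' factors land in orthogonal summands, shows that both $C_1$ and $C_2$ have the form $I + R$ with $R^2 = 0$; hence $C_i^{-1} = I - (C_i - I)$. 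This trivialises the inversion step and reduces the outer commutator to a manageable polynomial in the four rank-one summands of $R_1, R_2$.

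Next I would expand the four-fold product $C_1 C_2 C_1^{-1} C_2^{-1}$ formally. The terms of total degree $\le 1$ in $R_1, R_2$ cancel, leaving the Lie commutator $R_1 R_2 - R_2 R_1$ together with higher-order corrections; using $R_i^2 = 0$, those corrections collapse to a small collection of cubic and quartic monomials in $\alpha_{rs}, \beta_{ij}, \gamma_{kl}, \mu_{pq}$ and their stars. Each such monomial contains at least one composition of the form $\theta^{\ast}\!\varphi$ where $\theta$ has image in the $i$th, $k$th, $r$th, or $p$th component of $P$ (or $P^{\ast}$) and $\varphi$ pulls back from a different component; such a composition vanishes identically, and this observation is the main tool for pruning terms.

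The bulk of the work is then a case analysis on the remaining index coincidences. Under $i \neq k$ and $r \neq p$, the cases $k = r$ and $i = r$ are mutually exclusive (otherwise $i = k$), and I would verify:
\begin{enumerate}
\item If $k = r$, the surviving monomials organise into precisely the length-two commutator $\bigl[ E_{\mu_{pq}\alpha_{rs}^{\ast}}^{\ast},\, E_{\beta_{ij}\gamma_{kl}^{\ast}}^{\ast} \bigr]$ when matched against the expansion of Lemma~\ref{l03} applied to the composite generators $X = \mu_{pq}\alpha_{rs}^{\ast}$ and $Y = \beta_{ij}\gamma_{kl}^{\ast}$ viewed as elements of $\hom_A(Q, P^{\ast})$.
\item If $i = r$, the same pattern-matching yields $\bigl[ E_{\gamma_{kl}\beta_{ij}^{\ast}}^{\ast},\, E_{\mu_{pq}\alpha_{rs}^{\ast}}^{\ast} \bigr]$.
\item If neither $k = r$ nor $i = r$, every surviving monomial contains a vanishing composition of the type above, and the outer commutator is $I$.
\end{enumerate}

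The main obstacle is the bookkeeping: the raw expansion has on the order of eighty monomials, and organising the surviving ones under each index hypothesis into a single length-two commutator is the delicate step. The second subtlety is the final ``pattern match'' — recognising that the collection of cubic and quartic terms that survive is precisely what Lemma~\ref{l03} would produce for the composite generators $X$ and $Y$. Verifying this compatibility, including that the ``$-\tfrac12\theta\theta^{\ast}$'' corrections in the definition of $E_X^{\ast}$ and $E_Y^{\ast}$ reassemble correctly, is where the calculation truly has to be carried out (rather than sketched) and is the reason this lemma is considerably more involved than the triple commutators of Section~\ref{triple}.
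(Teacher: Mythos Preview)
Your outline is essentially the paper's approach --- reduce the inner commutators via Lemmas~\ref{l02} and~\ref{l03}, then expand --- but you substantially overestimate the difficulty, and this leads to a genuine misconception about what terms survive.

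The crucial structural observation you miss is that $R_1 = \gamma_{kl}\beta_{ij}^* - \beta_{ij}\gamma_{kl}^*$ is supported on the $P$-summand with image in the $P^*$-summand. Consequently not only is $R_1^2 = 0$, but $R_1 R_2 R_1 = 0$ automatically (since $R_2 R_1$ again lands in $P^*$), and a short index check shows $R_2 R_1 R_2 = 0$ under $i \neq k$. With $R_1^2 = R_2^2 = 0$ the expansion of $(I+R_1)(I+R_2)(I-R_1)(I-R_2)$ collapses to exactly
\[
  I + R_1 R_2 - R_2 R_1,
\]
a five-term expression --- not eighty monomials, and with \emph{no} surviving cubic or quartic corrections. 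The paper then recognises this directly as the product $[E_{\mu_{pq}\alpha_{rs}^*}^*, E_{\beta_{ij}\gamma_{kl}^*}^*]\,[E_{\gamma_{kl}\beta_{ij}^*}^*, E_{\mu_{pq}\alpha_{rs}^*}^*]$ and observes that one factor becomes trivial when $k=r$ and the other when $i=r$. So your worry about reassembling the ``$-\tfrac12\theta\theta^*$'' pieces is misplaced: by Lemma~\ref{l03} the commutator $[E_X^*, E_Y^*]$ is already $I + YX^* - XY^*$ with no such correction, and the pattern match is immediate. Contrary to your final remark, this lemma is in fact \emph{shorter} than the triple commutators of Section~\ref{triple}, not more involved.

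A minor correction: $X = \mu_{pq}\alpha_{rs}^*$ is a map $P^* \to P^*$, not an element of $\hom_A(Q,P^*)$; the symbol $E_X^*$ here is formal, with $X^*$ denoting the adjoint $\alpha_{rs}\mu_{pq}^*$, and the commutator notation is shorthand for the expression $I + YX^* - XY^*$ supplied by Lemma~\ref{l03}.
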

 \begin{proof}
  If $i \neq k$, then, by Lemma~\ref{l03}, we have
  \[
  [E_{\beta_{ij}}^*,E_{\gamma_{kl}}^*](z,x,f) = (I + \gamma_{kl}\beta_{ij}^* -
\beta_{ij}\gamma_{kl}^* )(z,x,f).
 \]
 If $r \neq p$, then, by Lemma~\ref{l02}, we have
 \[
  [E_{\alpha_{rs}}, E_{\mu_{pq}}^* ](z,x,f) = (I + \mu_{pq} \alpha_{rs}^* -
\alpha_{rs}\mu_{pq}^* )(z,x,f).
 \]
Now if $i \neq k$ and $r \neq p$, then we get
\begin{align*}
 \left[[E_{\beta_{ij}}^*,E_{\gamma_{kl}}^*],\right.&\left. [E_{\alpha_{rs}}, E_{\mu_{pq}}^*
]\right] (z,x,f)\\
 &=  (I - \gamma_{kl}\beta_{ij}^*\alpha_{rs}\mu_{pq}^* - \mu_{pq}\alpha_{rs}^*\gamma_{kl}\beta_{ij}^* + \beta_{ij}\gamma_{kl}^*\alpha_{rs}\mu_{pq}^*   + \mu_{pq}\alpha_{rs}^* \beta_{ij}\gamma_{kl}^*)(z,x,f)\\
& =  \left[E_{\mu_{pq}\alpha_{rs}^*}^*, E_{\beta_{ij}\gamma_{kl}^*}^* \right]\left[ E_{\gamma_{kl}\beta_{ij}^*}^*,E_{\mu_{pq}\alpha_{rs}^*}^* \right](z,x,f).
\numberthis{eq1}
\end{align*}
Now if $k =r$, then Equation \eqref{eq1} becomes $\left[ E_{\mu_{pq}\alpha_{rs}^*}^*, E_{\beta_{ij}\gamma_{kl}^*}^* \right] (z,x,f)$ and in particular
 \[     {\left[ E_{\mu_{pq}\alpha_{rs}^*}^*, E_{\beta_{ij}\gamma_{kl}^*}^* \right]} (z,x,f)=
      I (z,x,f) \quad\textnormal{ if }\quad  i = p,
  \]
and if $i = r$, then Equation \eqref{eq1} becomes
$\left[ E_{\gamma_{kl}\beta_{ij}^*}^*,E_{\mu_{pq}\alpha_{rs}^*}^* \right](z,x,f)$ and in particular
\[
    \left[ E_{\gamma_{kl}\beta_{ij}^*}^*,E_{\mu_{pq}\alpha_{rs}^*}^* \right](z,x,f) =
           I (z,x,f) \quad\textnormal{ if }\quad  k = p.\qedhere
   \]
\end{proof}
\begin{lem}\label{l09}
  Let $\alpha, \delta, \xi \in \hom_A(Q,P)$ and $\beta \in \hom_A(Q,P^*)$. Then, for $i,j,k,l,r,s,p,q$ with $1 \leq i,k,r,p \leq m$, $1 \leq j,l,s,q \leq n$, $i \neq k$ and $s \neq p$, the four-fold commutator $\left[[E_{\alpha_{ij}},E_{\delta_{kl}}],[E_{\xi_{rs}}, E_{\beta_{pq}}^* ]\right]$ is given by
  \[
  \left[[E_{\alpha_{ij}},E_{\delta_{kl}}], [E_{\xi_{rs}}, E_{\beta_{pq}}^* ]\right]
    = \begin{cases}
    \vspace{2mm}
    \left[  E_{\delta_{kl}\alpha_{ij}^*}, E_{\xi_{rs}\beta_{pq}^*} \right]
    &\textnormal{ if }\quad i = p,\\
        \vspace{2mm}
    \left[ E_{\alpha_{ij}\delta_{kl}^*}, E_{\xi_{rs}\beta_{pq}^*} \right]
    &\textnormal{ if }\quad k = p,\\
        \vspace{2mm}
     I & \textnormal{ otherwise }.
      \end{cases}
 \]
 \end{lem}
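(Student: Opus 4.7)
The plan is to follow the template of Lemma~\ref{l08}. First, since $i\neq k$, Lemma~\ref{l01} gives $[E_{\alpha_{ij}},E_{\delta_{kl}}]=I+X$ with $X:=\delta_{kl}\alpha_{ij}^{*}-\alpha_{ij}\delta_{kl}^{*}$; because $X$ reads only the $P^{*}$-component and outputs in the $P$-component, $X^{2}=0$ automatically. Applying Lemma~\ref{l02} with the condition forcing the ``first index'' comparison (namely $r\neq p$) yields $[E_{\xi_{rs}},E_{\beta_{pq}}^{*}]=I+Y$ with $Y:=\beta_{pq}\xi_{rs}^{*}-\xi_{rs}\beta_{pq}^{*}$; the relation $r\neq p$ gives $\langle x_{r},f_{p}\rangle=0$, so $Y^{2}=0$ as well. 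Hence $(I+X)^{-1}=I-X$ and $(I+Y)^{-1}=I-Y$.

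Second, I expand the outer commutator:
\[
  \bigl[(I+X),(I+Y)\bigr]=(I+X)(I+Y)(I-X)(I-Y)=I+XY-YX+YXY-XYX+(XY)^{2},
\]
using $X^{2}=Y^{2}=0$ throughout. The three longer terms vanish under our hypotheses: since $X$ has image in $P$ and annihilates $P$, both $XYX$ and $(XY)^{2}$ are zero; and $YXY$ dies because $XY$ outputs a multiple of $x_{k}$ (in the case $i=p$) or of $x_{i}$ (in the case $k=p$), which $Y$ then pairs with $f_{p}$ and hence annihilates under $k\neq p$ or $i\neq p$ respectively. This reduces the commutator to $I+XY-YX$.

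Third, I carry out the case analysis. In the case $i=p$, $k\neq p$, only the contributions $XY=\delta_{kl}\alpha_{pj}^{*}\beta_{pq}\xi_{rs}^{*}$ and $YX=\xi_{rs}\beta_{pq}^{*}\alpha_{pj}\delta_{kl}^{*}$ survive, since every other product in the expansion carries a factor of $\langle x_{k},f_{p}\rangle=0$ or lands in a summand that the next map kills. Adopting the conventions $(\theta\phi)^{*}=\phi^{*}\theta^{*}$ and $(\theta^{*})^{*}=\theta$, these two terms assemble via the template of Lemma~\ref{l01} into $[E_{\delta_{kl}\alpha_{ij}^{*}},E_{\xi_{rs}\beta_{pq}^{*}}]$. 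The case $k=p$, $i\neq p$ is symmetric, with the roles of $\delta_{kl}\alpha_{ij}^{*}$ and $\alpha_{ij}\delta_{kl}^{*}$ exchanged, producing $[E_{\alpha_{ij}\delta_{kl}^{*}},E_{\xi_{rs}\beta_{pq}^{*}}]$. In every remaining subcase, a pairing $\langle x_{i},f_{p}\rangle$ or $\langle x_{k},f_{p}\rangle$ kills every term of $XY$ and $YX$, leaving $I$.

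The main obstacle is the bookkeeping: sixteen products arise in $XY$ and $YX$, and one must track which survive the vanishing of $\langle x_{i},f_{p}\rangle$ or $\langle x_{k},f_{p}\rangle$ under each index coincidence. A smaller but real difficulty is reconciling the resulting two-term expression with the formal bracket notation $[E_{\theta\phi^{*}},E_{\psi\chi^{*}}]$ on the right-hand side, which tacitly extends $E_{(-)}$ to composites mapping between different summands of $Q\perp H(P)$, in exactly the same spirit as in Lemma~\ref{l08}.
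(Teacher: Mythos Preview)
Your argument is correct and follows essentially the same route as the paper: write the two inner commutators as $I+X$ and $I+Y$ via Lemmas~\ref{l01} and~\ref{l02} (using $r\neq p$, which is indeed what is needed, the ``$s\neq p$'' in the statement being a typo), expand the outer commutator, and then specialize according to whether $i=p$, $k=p$, or neither. The only cosmetic difference is that the paper writes out all six surviving words explicitly (its equation~\eqref{eq2}, which is your $I+XY-YX+YXY$) and factors the general expression as a product $[E_{\alpha_{ij}\delta_{kl}^*},E_{\xi_{rs}\beta_{pq}^*}]\,[E_{\delta_{kl}\alpha_{ij}^*},E_{\xi_{rs}\beta_{pq}^*}]$ before observing that one factor trivializes in each case, whereas you kill the $YXY$ term case by case first and then identify the remaining $I+XY-YX$ directly with a single commutator.
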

\begin{proof}
  If $i \neq k$, then, by Lemma~\ref{l01}, we have
 \[
  [E_{\alpha_{ij}},E_{\delta_{kl}}](z,x,f) = (I + \delta_{kl}\alpha_{ij}^* -
\alpha_{ij}\delta_{kl}^* )(z,x,f).
 \]
 If $r \neq p$, then, by Lemma~\ref{l02}, we have
 \[
  [E_{\xi_{rs}}, E_{\beta_{pq}}^* ](z,x,f) = (I + \beta_{pq} \xi_{rs}^* -
\xi_{rs}\beta_{pq}^* )(z,x,f).
 \]
Now if $i \neq k$ and $r \neq p$, then we get
\begin{align*}
  \left[[E_{\alpha_{ij}},E_{\delta_{kl}}],\right.&\left. [E_{\xi_{rs}}, E_{\beta_{pq}}^*
]\right](z,x,f)\\
  &=(I + \delta_{kl} \alpha_{ij}^* \beta_{pq}\xi{rs}^*
 - \alpha_{ij} \delta_{kl}^* \beta_{pq}\xi_{rs}^*
 + \xi_{rs}\beta_{pq}^* \delta_{kl} \alpha_{ij}^*   \\
 &\hspace{1cm}- \xi_{rs}\beta_{pq}^*\alpha_{ij}\delta_{kl}^*- \xi_{rs}\beta_{pq}^* \delta_{kl} \alpha_{ij}^*\beta_{pq}\xi_{rs}^* + \xi_{rs}\beta_{pq}^*\alpha_{ij}\delta_{kl}^* \beta_{pq}\xi_{rs}^* )(z,x,f) \\
  & = \left[ E_{\alpha_{ij}\delta_{kl}^*}, E_{\xi_{rs}\beta_{pq}^*}\right] \left[ E_{\delta_{kl}\alpha_{ij}^*}, E_{\xi_{rs}\beta_{pq}^*}\right](z,x,f).
 \numberthis{eq2}
 \end{align*}
 Now if $k = p$, then Equation \eqref{eq2} becomes $\left[ E_{\alpha_{ij}\delta_{kl}^*}, E_{\xi_{rs}\beta_{pq}^*}\right](z,x,f)$ and in particular
 \[
      \left[ E_{\alpha_{ij}\delta_{kl}^*}, E_{\xi_{rs}\beta_{pq}^*}\right](z,x,f) = I(z,x,f)\quad\textnormal{ if }\quad i=r,\\
    \]
and if $i = p$, then Equation \eqref{eq2} becomes $ \left[ E_{\delta_{kl}\alpha_{ij}^*}, E_{\xi_{rs}\beta_{pq}^*}\right](z,x,f)$ and in particular
\[
    \left[ E_{\delta_{kl}\alpha_{ij}^*}, E_{\xi_{rs}\beta_{pq}^*}\right](z,x,f)
    = I(z,x,f) \quad \textnormal{ if } \quad k=r.\qedhere
  \]
\end{proof}
\begin{lem}\label{l10}
  Let $\alpha, \delta \in \hom_A(Q,P)$ and $\beta,\gamma \in \hom_A(Q,P^*)$. Then,
for any $i,j,k,l,r,$ $s,p,q$ with $1 \leq i,k,r,p \leq m$, $1 \leq j,l,s,q \leq n$, $i
\neq k$ and $r \neq p$, the four-fold commutator $\left[ [E_{\alpha_{ij}},
E_{\beta_{kl}}^*], [ E_{\delta_{rs}}, E_{\gamma_{pq}}^*] \right]$is given by
\[
  \left[ [E_{\alpha_{ij}}, E_{\beta_{kl}}^*], [ E_{\delta_{rs}},E_{\gamma_{pq}}^*] \right]
= \begin{cases}
   \vspace{2mm}
   {\left[ E_{\alpha_{ij}\beta_{kl}^*},E_{\gamma_{pq}\delta_{rs}^*}^*\right]}^{-1} & \textnormal{ if }\quad  k=r \textnormal{ and } i \neq p, \\
     \vspace{2mm}
    \left[ E_{\delta_{rs}\gamma_{pq}^*}, E_{\beta_{kl}\alpha_{ij}^*}^* \right] & \textnormal{ if }\quad i = p \textnormal{ and } k \neq r,\\
    I & \textnormal{ if }\quad k \neq r \textnormal{ and } i \neq p.
  \end{cases}
 \]
 \end{lem}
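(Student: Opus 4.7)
The plan is to reduce the outer four-fold bracket to a product of the length-$4$ mixed commutators of Lemma~\ref{l02} and then expand formally. Since $i \neq k$ and $r \neq p$, Lemma~\ref{l02} yields
\begin{equation*}
\bigl[E_{\alpha_{ij}}, E_{\beta_{kl}}^*\bigr] = I - \alpha_{ij}\beta_{kl}^* + \beta_{kl}\alpha_{ij}^*, \qquad
\bigl[E_{\delta_{rs}}, E_{\gamma_{pq}}^*\bigr] = I - \delta_{rs}\gamma_{pq}^* + \gamma_{pq}\delta_{rs}^*.
\end{equation*}
Write $T_1 = \alpha_{ij}\beta_{kl}^*$, $T_2 = \beta_{kl}\alpha_{ij}^*$, $S_1 = \delta_{rs}\gamma_{pq}^*$, $S_2 = \gamma_{pq}\delta_{rs}^*$, and set $a = T_2 - T_1$, $b = S_2 - S_1$. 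Under $i \neq k$ and $r \neq p$, every length-$2$ product $T_uT_v$ and $S_uS_v$ vanishes: the relevant composition either lands in the wrong summand of $Q\!\perp\! H(P)$ for the next factor to act, or picks up a pairing $\langle x_i, f_k\rangle = \delta_{ik} = 0$. Hence $a^2 = b^2 = 0$, $(I+a)^{-1} = I - a$, $(I+b)^{-1} = I - b$, and
\begin{equation*}
\Bigl[[E_{\alpha_{ij}}, E_{\beta_{kl}}^*], [E_{\delta_{rs}}, E_{\gamma_{pq}}^*]\Bigr] = (I+a)(I+b)(I-a)(I-b) = I + ab - ba - aba + bab + abab.
\end{equation*}

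Next I would enumerate the cross-compositions. The mixed pairs $T_1S_2$, $T_2S_1$, $S_1T_2$, $S_2T_1$ all vanish on parity grounds (one side outputs into $P$, the other requires $P^*$-input, or vice versa), while among the remaining four products
\begin{equation*}
T_1S_1 \neq 0 \iff k = r, \qquad T_2S_2 \neq 0 \iff i = p, \qquad S_1T_1 \neq 0 \iff i = p, \qquad S_2T_2 \neq 0 \iff k = r,
\end{equation*}
each non-vanishing being detected by a single Kronecker pairing $\langle x_a, f_b\rangle = \delta_{ab}$. A quick inspection shows that every surviving summand of $aba$, $bab$, and $abab$ requires \emph{both} $i = p$ and $k = r$, which is excluded in every case of the statement. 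Therefore
\begin{equation*}
\Bigl[[E_{\alpha_{ij}}, E_{\beta_{kl}}^*], [E_{\delta_{rs}}, E_{\gamma_{pq}}^*]\Bigr] = I + ab - ba = I + T_1S_1 + T_2S_2 - S_1T_1 - S_2T_2.
\end{equation*}

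Finally I would split on the indices and match to the stated right-hand sides via the formula $[E_\theta, E_\phi^*] = I - \theta\phi^* + \phi\theta^*$ of Lemma~\ref{l02}, read backwards and applied formally with $(\eta\zeta)^* = \zeta^*\eta^*$. If $k = r$ and $i \neq p$, only $T_1S_1$ and $S_2T_2$ survive, yielding $I + T_1S_1 - S_2T_2 = \bigl[E_{\alpha_{ij}\beta_{kl}^*}, E_{\gamma_{pq}\delta_{rs}^*}^*\bigr]^{-1}$. If $i = p$ and $k \neq r$, only $T_2S_2$ and $S_1T_1$ survive, giving $\bigl[E_{\delta_{rs}\gamma_{pq}^*}, E_{\beta_{kl}\alpha_{ij}^*}^*\bigr]$. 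If $k \neq r$ and $i \neq p$, all four products vanish and the commutator equals $I$. The main obstacle is purely the bookkeeping: once the $a^2 = b^2 = 0$ identity and the single-pairing criteria for the $T_uS_v$ are recorded, everything collapses cleanly by case analysis.
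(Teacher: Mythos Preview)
Your argument is correct and follows essentially the same route as the paper: start from the two inner mixed commutators via Lemma~\ref{l02}, expand the four-fold bracket as an operator identity, and then specialise the indices. Your bookkeeping is in fact tidier than the paper's---by packaging the inner commutators as $I+a$ and $I+b$ and observing $a^2=b^2=0$, you obtain the closed form $I+ab-ba-aba+bab+abab$ directly, whereas the paper writes out the eleven surviving words by hand (its equation~\eqref{eq3}) before specialising; your parity and single-pairing analysis then recovers exactly those terms and makes transparent why the higher words $aba,bab,abab$ die unless both $k=r$ and $i=p$ hold simultaneously.
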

\begin{proof}
 If $i \neq k$, then, by Lemma~\ref{l02}, we have
 \[
  [E_{\alpha_{ij}}, E_{\beta_{kl}}^*](z,x,f) = (I + \beta_{kl}\alpha_{ij}^* -
\alpha_{ij}\beta_{kl}^* )(z,x,f).
 \]
 If $r \neq p$, then, by Lemma~\ref{l02}, we have
 \[
  [ E_{\delta_{rs}}, E_{\gamma_{pq}}^*](z,x,f) = (I  + \gamma_{pq}\delta_{rs}^*
- \delta_{rs} \gamma_{pq}^* )(z,x,f).
 \]
Now if $i \neq k$ and $r \neq p$, then we get
\begin{align*}
 \left[ [E_{\alpha_{ij}}, E_{\beta_{kl}}^*], \right.&\left.[ E_{\delta_{rs}},E_{\gamma_{pq}}^*] \right](z,x,f)\\
       & =(I + \beta_{kl}\alpha_{ij}^* \gamma_{pq}\delta_{rs}^*                           +\alpha_{ij}\beta_{kl}^*\delta_{rs}\gamma_{pq}^*
     -\gamma_{pq}\delta_{rs}^*\beta_{kl}\alpha_{ij}^* + \alpha_{ij}\beta_{kl}^* \delta_{rs}\gamma_{pq}^*\alpha_{ij}\beta_{kl}^*
     \\
 &\hspace{1cm}+ \alpha_{ij}\beta_{kl}^* \delta_{rs}\gamma_{pq}^*\alpha_{ij}\beta_{kl}^*\delta_{rs}\gamma_{pq}^*
   -\delta_{rs} \gamma_{pq}^*\alpha_{ij}\beta_{kl}^* \delta_{rs} \gamma_{pq}^*
  -\beta_{kl}\alpha_{ij}^*\gamma_{pq}\delta_{rs}^*\beta_{kl}\alpha_{ij}^*
     \\
  &\hspace{1cm} -\delta_{rs} \gamma_{pq}^*\alpha_{ij}\beta_{kl}^*+ \gamma_{pq}\delta_{rs}^* \beta_{kl}\alpha_{ij}^*  \gamma_{pq}\delta_{rs}^* +\beta_{kl}\alpha_{ij}^*\gamma_{pq}\delta_{rs}^*\beta_{kl}\alpha_{ij}^*\gamma_{pq}\delta_{rs}^* )(z,x,f).
   \numberthis{eq3}
\end{align*}
Now if $k = r$ and $i \neq p$, then, by Equation~\eqref{eq3}, we have
 \[
     \left[ [E_{\alpha_{ij}}, E_{\beta_{kl}}^*], [ E_{\delta_{rs}},E_{\gamma_{pq}}^*] \right](z,x,f) = {\left[ E_{\alpha_{ij}\beta_{kl}^*},E_{\gamma_{pq}\delta_{rs}^*}^*\right]}^{-1} (z,x,f)
      \]
and if $i = p$ and $ k \neq r$, then, by Equation~\eqref{eq3}, we have
\[
     \left[ [E_{\alpha_{ij}}, E_{\beta_{kl}}^*], [ E_{\delta_{rs}},E_{\gamma_{pq}}^*] \right](z,x,f) =
     \left[  E_{\delta_{rs}\gamma_{pq}^*}, E_{\beta_{kl}\alpha_{ij}^*}^* \right](z,x,f).
\]
Now if $i \neq p$ and $k \neq r$, then, by Equation~\eqref{eq3}, we get
\[
     \left[ [E_{\alpha_{ij}}, E_{\beta_{kl}}^*], [ E_{\delta_{rs}},E_{\gamma_{pq}}^*] \right](z,x,f) =
     I(z,x,f).\qedhere
\]
\end{proof}
\begin{lem}\label{l11}
  Let $\alpha, \delta, \xi, \mu \in \hom_A(Q,P)$. Then, for $i,j,k,l,r,s,p,q$ with ${1 \leq i,k,r,p \leq m}$, $1 \leq j,l,s,q \leq n$, $i \neq k$ and $r \neq p$, the four-fold commutator $\left[ [E_{\alpha_{ij}},E_{\delta_{kl}}], [ E_{\xi_{rs}}, E_{\mu_{pq}}] \right]$ is given by
\[
  \left[ [E_{\alpha_{ij}},E_{\delta_{kl}}], [ E_{\xi_{rs}}, E_{\mu_{pq}}] \right]
=    I .
 \]
 \end{lem}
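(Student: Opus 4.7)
The plan is to exploit the structure of the commutator $[E_{\alpha_{ij}}, E_{\delta_{kl}}]$ computed in Lemma~\ref{l01}. Under the hypothesis $i \neq k$ (respectively $r \neq p$), Lemma~\ref{l01} gives
\[
 [E_{\alpha_{ij}}, E_{\delta_{kl}}] = I + N_1, \qquad [E_{\xi_{rs}}, E_{\mu_{pq}}] = I + N_2,
\]
where $N_1 = \delta_{kl}\alpha_{ij}^* - \alpha_{ij}\delta_{kl}^*$ and $N_2 = \mu_{pq}\xi_{rs}^* - \xi_{rs}\mu_{pq}^*$. The first observation I would record is the explicit shape
\[
 N_1(z,x,f) = (0,\; \langle f,x_i\rangle\langle t_{kl},w_{ij}\rangle x_k - \langle f,x_k\rangle\langle w_{ij},t_{kl}\rangle x_i,\; 0),
\]
and similarly for $N_2$. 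The crucial structural feature is that $N_1$ and $N_2$ both read \emph{only} the $P^{\ast}$-component of the input and produce output lying \emph{only} in the $P$-component.

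From this shape the following vanishings are immediate, and they form the heart of the argument:
\[
 N_1^{2} = 0, \qquad N_2^{2} = 0, \qquad N_1 N_2 = 0, \qquad N_2 N_1 = 0,
\]
since in each composition the inner map sends $(z,x,f)$ to a triple whose $P^{\ast}$-component is zero, while the outer map kills any input with zero $P^{\ast}$-component. In particular $(I+N_i)(I-N_i) = I - N_i^{2} = I$, so the inverses are $[E_{\alpha_{ij}}, E_{\delta_{kl}}]^{-1} = I - N_1$ and $[E_{\xi_{rs}}, E_{\mu_{pq}}]^{-1} = I - N_2$.

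With these relations in hand, the four-fold commutator collapses by direct expansion:
\[
 (I+N_1)(I+N_2)(I-N_1)(I-N_2) = (I+N_1+N_2)(I-N_1-N_2) = I - (N_1+N_2)^{2} = I,
\]
using $N_1 N_2 = N_2 N_1 = 0$ and $N_i^{2}=0$ at each step. The only point that requires care is to verify the vanishing of the cross-compositions $N_1 N_2$ and $N_2 N_1$ from the coordinate-free definitions of $\alpha_{ij}\delta_{kl}^{\ast}$, etc.; this is where one must be explicit, but it reduces to the fact that $\delta_{kl}^{\ast}$, $\alpha_{ij}^{\ast}$, $\mu_{pq}^{\ast}$, $\xi_{rs}^{\ast}$ all annihilate inputs whose $P^{\ast}$-part is zero, while $\alpha_{ij}$, $\delta_{kl}$, $\xi_{rs}$, $\mu_{pq}$ land in the $P$-summand. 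No index coincidences among $\{i,k,r,p\}$ are used beyond the standing assumptions $i\neq k$ and $r\neq p$, so the conclusion holds uniformly, without case analysis.
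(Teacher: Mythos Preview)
Your proof is correct and follows essentially the same approach as the paper: both arguments invoke Lemma~\ref{l01} to write each inner commutator as $I$ plus an operator that reads only the $P^{\ast}$-component and outputs only in the $P$-component, and then use the resulting vanishing of all pairwise compositions to collapse the four-fold commutator to $I$. Your packaging via $N_1, N_2$ with $N_iN_j = 0$ is somewhat more conceptual than the paper's step-by-step expansion, but the underlying mechanism is identical.
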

\begin{proof}
 If $i \neq k$, then, by Lemma~\ref{l01}, we have
 \[
  [E_{\alpha_{ij}}, E_{\delta_{kl}}](z,x,f) = (I + \delta_{kl}\alpha_{ij}^* - \alpha_{ij}\delta_{kl}^*)(z,x,f).
 \]
 If $r \neq p$, then, by Lemma~\ref{l01}, we have
 \[
 [ E_{\xi_{rs}}, E_{\mu_{pq}}](z,x,f) = (I + \mu_{pq}\xi_{rs}^* - \xi_{rs}\mu_{pq}^*)  (z,x,f).
 \]
Now if $i \neq k$ and $r \neq p$, then we get
\begin{align*}
 \left[ [E_{\alpha_{ij}},E_{\delta_{kl}}],\right.&\left. [ E_{\xi_{rs}}, E_{\mu_{pq}}] \right](z,x,f)\\
 & =  [E_{\alpha_{ij}},E_{\delta_{kl}}] [ E_{\xi_{rs}}, E_{\mu_{pq}}]
      {[E_{\alpha_{ij}},E_{\delta_{kl}}]}^{-1} {[ E_{\xi_{rs}}, E_{\mu_{pq}}]}^{-1}(z,x,f)\\
 & =  [E_{\alpha_{ij}},E_{\delta_{kl}}] [ E_{\xi_{rs}}, E_{\mu_{pq}}]
      {[E_{\alpha_{ij}},E_{\delta_{kl}}]}^{-1}\Bigl(\Bigl (I - \mu_{pq}\xi_{rs}^*  
   +\, \xi_{rs}\mu_{pq}^* \Bigr)(z,x,f) \Bigr)\\
 & = [E_{\alpha_{ij}},E_{\delta_{kl}}] [ E_{\xi_{rs}}, E_{\mu_{pq}}] \Bigl(\Bigl (I - \mu_{pq}\xi_{rs}^*  + \xi_{rs}\mu_{pq}^*  + \delta_{kl}\alpha_{ij}^*  - \alpha_{ij}\delta_{kl}^* \Bigr)(z,x,f) \Bigr)\\
 & = [E_{\alpha_{ij}},E_{\delta_{kl}}]\Bigl(\Bigl (I  + \delta_{kl}\alpha_{ij}^*  - \alpha_{ij}\delta_{kl}^*\Bigr)(z,x,f)\Bigr)\\
 & = I(z,x,f).\qedhere
 \end{align*}
\end{proof}
\begin{lem}\label{l12}
  Let $\beta, \gamma, \eta, \nu \in \hom_A(Q,P)$. Then, for $i,j,k,l,r,s,p,q$ with ${1 \leq i,k,r,p \leq m}$, $1 \leq j,l,s,q \leq n$, $i \neq k$ and $r \neq p$, the four-fold commutator $\left[ [E_{\beta_{ij}}^*,E_{\gamma_{kl}}^*], [E_{\eta_{rs}}^*, E_{\nu_{pq}}^*] \right]$ is given by
\[
  \left[ [E_{\beta_{ij}}^*,E_{\gamma_{kl}}^*], [ E_{\eta_{rs}}^*, E_{\nu_{pq}}^*] \right]
=    I.
 \]
 \end{lem}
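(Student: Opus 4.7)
The strategy closely parallels that of Lemma~\ref{l11}, with the roles of the $P$- and $P^{*}$-components swapped. First I would invoke Lemma~\ref{l03}: since $i\neq k$ and $r\neq p$, the two inner commutators simplify to the low-degree expressions
\begin{align*}
[E_{\beta_{ij}}^{*},E_{\gamma_{kl}}^{*}] &= I+\gamma_{kl}\beta_{ij}^{*}-\beta_{ij}\gamma_{kl}^{*},\\
[E_{\eta_{rs}}^{*},E_{\nu_{pq}}^{*}] &= I+\nu_{pq}\eta_{rs}^{*}-\eta_{rs}\nu_{pq}^{*}.
\end{align*}
Their inverses are obtained by negating the two correction terms, since all quadratic terms in $\beta,\gamma,\eta,\nu$ and their duals vanish by the index hypothesis (this is the content of the identities in Lemma~\ref{l03}).

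Next I would record the shape of the four correction operators $\gamma_{kl}\beta_{ij}^{*}$, $\beta_{ij}\gamma_{kl}^{*}$, $\nu_{pq}\eta_{rs}^{*}$, $\eta_{rs}\nu_{pq}^{*}$ on $Q\perp H(P)$. Using the coordinate formulas from Section~\ref{prelim}, each of them has the form
\[
(z,x,f)\longmapsto\bigl(0,\,0,\,\text{(a scalar depending only on $x$)}\cdot f_{k}\text{ or }f_{i}\bigr),
\]
i.e.\ it lands entirely in the $P^{*}$-summand and, crucially, depends only on the $P$-component $x$ of the input. The reason is that each of $\beta_{ij}^{*}$, $\gamma_{kl}^{*}$, $\eta_{rs}^{*}$, $\nu_{pq}^{*}$ carries $(z,x,f)$ to $(\text{element of }Q,0,0)$ depending only on $x$, and then $\gamma_{kl}$, $\beta_{ij}$, $\nu_{pq}$, $\eta_{rs}$ (which are applied to an element of $Q$) land in the $P^{*}$-summand.

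With this in hand, write the two inner commutators as $I+T_{1}$ and $I+T_{2}$ where $T_{1},T_{2}$ satisfy: image of $T_{a}$ lies in the $P^{*}$-summand, and $T_{a}$ depends only on the $P$-component of its argument. Consequently $T_{a}T_{b}=0$ for all $a,b\in\{1,2\}$, because the $P$-component of $T_{b}(z,x,f)$ is zero. Hence
\[
(I+T_{1})(I+T_{2})=I+T_{1}+T_{2}=(I+T_{2})(I+T_{1}),
\]
so $[E_{\beta_{ij}}^{*},E_{\gamma_{kl}}^{*}]$ and $[E_{\eta_{rs}}^{*},E_{\nu_{pq}}^{*}]$ commute as elements of $\operatorname{Aut}(Q\perp H(P))$. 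Therefore the outer commutator equals $I$, as claimed. There is no real obstacle here; the only thing to verify carefully is the component/dependence analysis in the middle step, which is immediate from the coordinate formulas given in Section~\ref{prelim}.
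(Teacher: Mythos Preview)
Your proposal is correct and follows essentially the same approach as the paper: both invoke Lemma~\ref{l03} to write the inner commutators as $I+T_1$ and $I+T_2$, and then use that all cross products $T_aT_b$ vanish so the two factors commute. Your argument is slightly more conceptual in that you explicitly isolate the structural reason for the vanishing (each $T_a$ has image in the $P^*$-summand and depends only on the $P$-coordinate), whereas the paper simply carries out the step-by-step symbolic multiplication and watches the cross terms disappear; but these are the same proof.
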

\begin{proof}
 If $i \neq k$, then, by Lemma~\ref{l03}, we have
 \[
  [E_{\beta_{ij}}^*,E_{\gamma_{kl}}^*](z,x,f) = (I + \gamma_{kl}\beta_{ij}^* - \beta_{ij}\gamma_{kl}^*)(z,x,f).
 \]
 If $r \neq p$, then, by Lemma~\ref{l03}, we have
 \[
 [  E_{\eta_{rs}}^*, E_{\nu_{pq}}^* ](z,x,f) = (I + \nu_{pq}\eta_{rs}^* - \eta_{rs}\nu_{pq}^*)  (z,x,f).
 \]
Now if $i \neq k$ and $r \neq p$, then we get
\begin{align*}
 \left[[E_{\beta_{ij}}^*,E_{\gamma_{kl}}^*],\right.&\left. [ E_{\eta_{rs}}^*, E_{\nu_{pq}}^*]] \right](z,x,f)\\
 & =  [E_{\beta_{ij}}^*,E_{\gamma_{kl}}^*][ E_{\eta_{rs}}^*, E_{\nu_{pq}}^*]
      {[E_{\beta_{ij}}^*,E_{\gamma_{kl}}^*] }^{-1} {[ E_{\eta_{rs}}^*, E_{\nu_{pq}}^*]}^{-1}(z,x,f)\\
 & =  [E_{\beta_{ij}}^*,E_{\gamma_{kl}}^*][ E_{\eta_{rs}}^*, E_{\nu_{pq}}^*]
      {[E_{\beta_{ij}}^*,E_{\gamma_{kl}}^*] }^{-1}\Bigl(\Bigl (I - \nu_{pq}\eta_{rs}^*  
 + \eta_{rs}\nu_{pq}^* \Bigr)(z,x,f) \Bigr)\\
 & = [E_{\beta_{ij}}^*,E_{\gamma_{kl}}^*][ E_{\eta_{rs}}^*, E_{\nu_{pq}}^*]
      \Bigl(\Bigl (
            I  -  \nu_{pq}\eta_{rs}^* - \eta_{rs}\nu_{pq}^* - \gamma_{kl}\beta_{ij}^* 
   + \beta_{ij}\gamma_{kl}^* \Bigr)(z,x,f) \Bigr)\\
 & =  [E_{\beta_{ij}}^*,E_{\gamma_{kl}}^*]\Bigl( \Bigl(I - \gamma_{kl}\beta_{ij}^* + \beta_{ij}\gamma_{kl}^* \Bigr)(z,x,f)\Bigr)\\
 & = I(z,x,f).\qedhere
 \end{align*}
\end{proof}
\begin{lem}\label{l13}
  Let $\alpha, \delta \in \hom_A(Q,P)$ and $\beta,\gamma \in \hom_A(Q,P^*)$. Then, 
for any $i,j,k,l,r,$ $s,p,q$ with $1 \leq i,k,r,p \leq m$, $1 \leq j,l,s,q \leq n$, $i
\neq k$ and $r \neq p$, the four-fold commutator $\left[ [E_{\alpha_{ij}}, E_{\delta_{kl}}], [ E_{\beta_{rs}}^*,E_{\gamma_{pq}}^*] \right]$ is given by
\[
  \left[ [E_{\alpha_{ij}}, E_{\delta_{kl}}], [ E_{\beta_{rs}}^*,E_{\gamma_{pq}}^*] \right]
= \begin{cases}
   \vspace{2mm}
   {\left[ E_{\alpha_{ij}\beta_{kl}^*},E_{\gamma_{pq}\delta_{rs}^*}^*\right]}^{-1} & \textnormal{ if }\quad  k=r \textnormal{ and } i \neq p, \\
     \vspace{2mm}
    \left[ E_{\delta_{rs}\gamma_{pq}^*}, E_{\beta_{kl}\alpha_{ij}^*}^* \right] & \textnormal{ if }\quad i = p \textnormal{ and } k \neq r,\\
    I & \textnormal{ if }\quad k \neq r \textnormal{ and } i \neq p.
  \end{cases}
 \]
 \end{lem}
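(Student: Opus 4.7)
The plan is to parallel the strategy used in Lemma~\ref{l10}, exploiting a structural simplification that specializes to the present situation.

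First, I would apply Lemma~\ref{l01} (using $i \neq k$) to rewrite the first inner commutator as $[E_{\alpha_{ij}}, E_{\delta_{kl}}] = I + A$, with $A := \delta_{kl}\alpha_{ij}^* - \alpha_{ij}\delta_{kl}^*$, and Lemma~\ref{l03} (using $r \neq p$) to rewrite the second inner commutator as $[E_{\beta_{rs}}^*, E_{\gamma_{pq}}^*] = I + B$, with $B := \gamma_{pq}\beta_{rs}^* - \beta_{rs}\gamma_{pq}^*$. Inspection of the explicit coordinate formulas for these pieces reveals a crucial structural asymmetry: $A$ reads only the $P^*$-coordinate $f$ and writes only into the $P$-coordinate, while $B$ reads only $x$ and writes only into $P^*$. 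In particular $A^2 = 0 = B^2$, so $(I+A)^{-1} = I - A$ and $(I+B)^{-1} = I - B$.

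The next step is to expand the four-fold commutator $(I+A)(I+B)(I-A)(I-B)$ and cancel every term containing $A^2$ or $B^2$, which leaves the clean six-term sum
\[
    I + AB - BA + BAB - ABA + ABAB.
\]
Write $A_1 = \delta_{kl}\alpha_{ij}^*$, $A_2 = \alpha_{ij}\delta_{kl}^*$, $B_1 = \gamma_{pq}\beta_{rs}^*$, $B_2 = \beta_{rs}\gamma_{pq}^*$. Each composite above breaks into products of these building blocks; each such product picks up pairings of the form $\langle f_\bullet, x_\bullet\rangle$ between basis elements, and such a pairing vanishes unless the two indices coincide. A finite case analysis on which of $\langle f_p, x_i\rangle$, $\langle f_r, x_i\rangle$, $\langle f_p, x_k\rangle$, $\langle f_r, x_k\rangle$ can be nonzero then organizes the result.

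In the case $k = r$, $i \neq p$, this bookkeeping shows that in $AB$ only $A_2 B_2$ survives, in $BA$ only $B_1 A_1$ survives, and every continuation forming $BAB$, $ABA$, or $ABAB$ requires one of the forbidden pairings $\langle f_p, x_i\rangle$ or $\langle f_k, x_i\rangle$ and thus vanishes. The operator reduces to $I + \alpha_{ij}\delta_{kl}^*\beta_{rs}\gamma_{pq}^* - \gamma_{pq}\beta_{rs}^*\delta_{kl}\alpha_{ij}^*$, which one recognises as ${[E_{\alpha_{ij}\beta_{kl}^*}, E_{\gamma_{pq}\delta_{rs}^*}^*]}^{-1}$. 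The symmetric case $i = p$, $k \neq r$ leaves only $A_1 B_1$ and $B_2 A_2$ and reassembles into $[E_{\delta_{rs}\gamma_{pq}^*}, E_{\beta_{kl}\alpha_{ij}^*}^*]$. When neither coincidence holds, every summand of every composite has at least one vanishing pairing and the four-fold commutator collapses to $I$.

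The main obstacle is the final identification step: verifying that the surviving summands, once grouped, assemble exactly into the claimed commutators of composite generators, with no stray residue of sign or scalar. This is a delicate but algorithmic matching, and is formally identical to the corresponding step in the proof of Lemma~\ref{l10}; no fresh conceptual ingredient is required.
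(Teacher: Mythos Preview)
Your overall strategy matches the paper's: start from the formulas of Lemmas~\ref{l01} and~\ref{l03}, expand the four-fold commutator, and prune terms by the Kronecker-delta pairings $\langle f_\bullet,x_\bullet\rangle$. Your observation that $A^2=B^2=0$ and the resulting six-term shape $I+AB-BA+BAB-ABA+ABAB$ is a cleaner packaging of exactly the brute-force expansion the paper writes out as its equation~\eqref{eq4}.

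There is, however, a genuine gap in your third case. You correctly list \emph{four} possible nonzero pairings --- $\langle f_p,x_i\rangle$, $\langle f_r,x_i\rangle$, $\langle f_p,x_k\rangle$, $\langle f_r,x_k\rangle$ --- but your case split only tracks two of the corresponding coincidences, namely $i=p$ and $k=r$. The hypotheses $k\neq r$ and $i\neq p$ do \emph{not} force $i\neq r$ or $k\neq p$. For instance, take $i=r$ with $k\neq r$, $i\neq p$, $k\neq p$: then $\langle f_r,x_i\rangle=1$, the summand $A_1B_2=\delta_{kl}\alpha_{ij}^*\beta_{rs}\gamma_{pq}^*$ survives in $AB$, the summand $B_1A_2=\gamma_{pq}\beta_{rs}^*\alpha_{ij}\delta_{kl}^*$ survives in $BA$, and the commutator is
\[
I-\delta_{kl}\alpha_{ij}^*\beta_{rs}\gamma_{pq}^*+\gamma_{pq}\beta_{rs}^*\alpha_{ij}\delta_{kl}^*\neq I.
\]
So your assertion that ``every summand of every composite has at least one vanishing pairing'' in the third case is false as written. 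The paper's own proof does pick up these extra sub-cases $i=r$ and $k=p$ (see the reductions following \eqref{eq5} and \eqref{eq6}), even though the lemma statement as printed does not list them and carries some evident index typos. To make your argument complete you must either add the assumptions $i\neq r$, $k\neq p$ to the third case, or treat those coincidences separately as the paper does.
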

\begin{proof}
 If $i \neq k$, then, by Lemma~\ref{l01}, we have
 \[
  [E_{\alpha_{ij}}, E_{\delta_{kl}}](z,x,f) = (I + \delta_{kl}\alpha_{ij}^* - \alpha_{ij}\delta_{kl}^*)(z,x,f).
 \]
 If $r \neq p$, then, by Lemma~\ref{l03}, we have
 \[
  [E_{\beta_{rs}}^*,E_{\gamma_{pq}}^*](z,x,f) = (I + \gamma_{pq}\beta_{rs}^* - \beta_{rs}\gamma_{pq}^*)(z,x,f).
 \]
Now if $i \neq k$ and $r \neq p$, then, by the coordinate-free method, we get
\begin{align*}
\left[ [E_{\alpha_{ij}}, E_{\delta_{kl}}],\right.&\left. [ E_{\beta_{rs}}^*,E_{\gamma_{pq}}^*] \right](z,x,f)\\
     &=\Bigl(I + \delta_{kl}\alpha_{ij}^*\gamma_{pq}\beta_{rs}^*
		-\delta_{kl}\alpha_{ij}^*\beta_{rs}\gamma_{pq}^*
		-\alpha_{ij}\delta_{kl}^*\gamma_{pq}\beta_{rs}^* 
		+ \alpha_{ij}\delta_{kl}^* \beta_{rs}\gamma_{pq}^*
           - \gamma_{pq}\beta_{rs}^*\delta_{kl}\alpha_{ij}^*\Bigr.\\
      &\hspace{1cm}  + \gamma_{pq}\beta_{rs}^* \alpha_{ij}\delta_{kl}^*
          -\beta_{rs}\gamma_{pq}^*\alpha_{ij}\delta_{kl}^*
          -\beta_{rs}\gamma_{pq}^*\alpha_{ij}\delta_{kl}^*\beta_{rs}\gamma_{pq}^*+\alpha_{ij}\delta_{kl}^* \gamma_{pq}\beta_{rs}^*\delta_{kl}\alpha_{ij}^*
          \\
&\hspace{1cm} +\beta_{rs}\gamma_{pq}^*\delta_{kl}\alpha_{ij}^*
+\gamma_{pq}\beta_{rs}^*\delta_{kl}\alpha_{ij}^*\gamma_{pq}\beta_{rs}^*
- \gamma_{pq}\beta_{rs}^*\delta_{kl}\alpha_{ij}^*\beta_{rs}\gamma_{pq}^*
        \\
&\hspace{1cm}+ \gamma_{pq}\beta_{rs}^* \alpha_{ij}\delta_{kl}^*\beta_{rs}\gamma_{pq}^*
-\beta_{rs}\gamma_{pq}^*\delta_{kl}\alpha_{ij}^*\gamma_{pq}\beta_{rs}^*
+\beta_{rs}\gamma_{pq}^*\delta_{kl}\alpha_{ij}^* \beta_{rs}\gamma_{pq}^*
\\
&\hspace{1cm}+\beta_{rs}\gamma_{pq}^*\alpha_{ij}\delta_{kl}^*\gamma_{pq}\beta_{rs}^*
- \delta_{kl}\alpha_{ij}^*\gamma_{pq}\beta_{rs}^*\delta_{kl}\alpha_{ij}^*
+\delta_{kl}\alpha_{ij}^* \gamma_{pq}\beta_{rs}^* \alpha_{ij}\delta_{kl}^*\\
&\hspace{1cm}+\delta_{kl}\alpha_{ij}^*\beta_{rs}\gamma_{pq}^* \delta_{kl}\alpha_{ij}^*
            -\delta_{kl}\alpha_{ij}^*\beta_{rs}\gamma_{pq}^*\alpha_{ij}\delta_{kl}^*
             +\alpha_{ij}\delta_{kl}^*\beta_{rs}\gamma_{pq}^*\alpha_{ij}\delta_{kl}^*
          \\
          &\hspace{1cm} -\alpha_{ij}\delta_{kl}^* \gamma_{pq}\beta_{rs}^* \alpha_{ij}\delta_{kl}^*
           -\alpha_{ij}\delta_{kl}^*\beta_{rs}\gamma_{pq}^* \delta_{kl}\alpha_{ij}^*- \gamma_{pq}\beta_{rs}^* \alpha_{ij}\delta_{kl}^*\gamma_{pq}\beta_{rs}^*
        \\
           &\hspace{1cm}-\delta_{kl}\alpha_{ij}^*\beta_{rs}\gamma_{pq}^*\alpha_{ij}\delta_{kl}^*\beta_{rs}\gamma_{pq}^*
          +\alpha_{ij}\delta_{kl}^*\beta_{rs}\gamma_{pq}^*\alpha_{ij}\delta_{kl}^*\beta_{rs}\gamma_{pq}^*
          \\
        &\hspace{1cm} + \delta_{kl}\alpha_{ij}^*\gamma_{pq}\beta_{rs}^*\delta_{kl}\alpha_{ij}^*\gamma_{pq}\beta_{rs}^*- \delta_{kl}\alpha_{ij}^*\gamma_{pq}\beta_{rs}^*\delta_{kl}\alpha_{ij}^*\beta_{rs}\gamma_{pq}^*
        \\
        &\hspace{1cm} - \delta_{kl}\alpha_{ij}^*\gamma_{pq}\beta_{rs}^* \alpha_{ij}\delta_{kl}^*\gamma_{pq}\beta_{rs}^*+ \delta_{kl}\alpha_{ij}^*\gamma_{pq}\beta_{rs}^* \alpha_{ij}\delta_{kl}^*\beta_{rs}\gamma_{pq}^*
        \\
        &\hspace{1cm} -\delta_{kl}\alpha_{ij}^*\beta_{rs}\gamma_{pq}^*\delta_{kl}\alpha_{ij}^*\gamma_{pq}\beta_{rs}^*
        +\delta_{kl}\alpha_{ij}^*\beta_{rs}\gamma_{pq}^*\delta_{kl}\alpha_{ij}^* \beta_{rs}\gamma_{pq}^*
        \\
        &\hspace{1cm} -\alpha_{ij}\delta_{kl}^* \gamma_{pq}\beta_{rs}^*\delta_{kl}\alpha_{ij}^*\gamma_{pq}\beta_{rs}^*
        +\alpha_{ij}\delta_{kl}^*\gamma_{pq}\beta_{rs}^*\delta_{kl}\alpha_{ij}^*\beta_{rs}\gamma_{pq}^*
        \\
        &\hspace{1cm}+\alpha_{ij}\delta_{kl}^* \gamma_{pq}\beta_{rs}^* \alpha_{ij}\delta_{kl}^*\gamma_{pq}\beta_{rs}^*
        -\alpha_{ij}\delta_{kl}^*\gamma_{pq}\beta_{rs}^* \alpha_{ij}\delta_{kl}^*\beta_{rs}\gamma_{pq}^*
        \\
        &\hspace{1cm} +\alpha_{ij}\delta_{kl}^*\beta_{rs}\gamma_{pq}^*\delta_{kl}\alpha_{ij}^*\gamma_{pq}\beta_{rs}^*
        -\alpha_{ij}\delta_{kl}^*\beta_{rs}\gamma_{pq}^*\delta_{kl}\alpha_{ij}^* \beta_{rs}\gamma_{pq}^*
        \\
        &\hspace{1cm}
        +\delta_{kl}\alpha_{ij}^*\beta_{rs}\gamma_{pq}^*\alpha_{ij}\delta_{kl}^*\gamma_{pq}\beta_{rs}^*-\alpha_{ij}\delta_{kl}^*\beta_{rs}\gamma_{pq}^*\alpha_{ij}\delta_{kl}^*\gamma_{pq}\beta_{rs}^*)(z,x,f).
          \numberthis{eq4}
 \end{align*}
If $i = p$ or $k=r$ or $i \neq r$ and $k \neq p$, then the Equation~\eqref{eq4} becomes
\begin{align*}
 \left[ [E_{\alpha_{ij}}, E_{\delta_{kl}}],\right.&\left. [ E_{\beta_{rs}}^*,E_{\gamma_{pq}}^*] \right](z,x,f)\\
& =I +\delta_{kl}\alpha_{ij}^*\gamma_{pq}\beta_{rs}^* + \alpha_{ij}\delta_{kl}^*\beta_{rs}\gamma_{pq}^* - \gamma_{pq}\beta_{rs}^*\delta_{kl}\alpha_{ij}^*-\beta_{rs}\gamma_{pq}^*\alpha_{ij}\delta_{kl}^* \\
&\hspace{1cm} - \beta_{rs}\gamma_{pq}^*\alpha_{ij}\delta_{kl}^*\beta_{rs}\gamma_{pq}^*
+\gamma_{pq}\beta_{rs}^*\delta_{kl}\alpha_{ij}^*\gamma_{pq}\beta_{rs}^*- \delta_{kl}\alpha_{ij}^*\gamma_{pq}\beta_{rs}^*\delta_{kl}\alpha_{ij}^*\\
&\hspace{1cm}+\alpha_{ij}\delta_{kl}^*\beta_{rs}\gamma_{pq}^*\alpha_{ij}\delta_{kl}^*
+\alpha_{ij}\delta_{kl}^*\beta_{rs}\gamma_{pq}^*\alpha_{ij}\delta_{kl}^*\beta_{rs}\gamma_{pq}^*\\
&\hspace{1cm}+ \delta_{kl}\alpha_{ij}^*\gamma_{pq}\beta_{rs}^*\delta_{kl}\alpha_{ij}^*\gamma_{pq}\beta_{rs}^*
+\alpha_{ij}\delta_{kl}^*\beta_{rs}\gamma_{pq}^*\delta_{kl}\alpha_{ij}^*\gamma_{pq}\beta_{rs}^*.\numberthis{eq5}
\end{align*}
If (i) $i =p$ and $k \neq r$ or (ii) $i \neq r, k\neq p$ and $k \neq r$, then the Equation~\eqref{eq5} reduces to
\[
I +\delta_{kl}\alpha_{ij}^*\gamma_{pq}\beta_{rs}^* - \beta_{rs}\gamma_{pq}^*\alpha_{ij}\delta_{kl}^*
= \left[ E_{\delta_{kl}\alpha_{ij}^*}, E_{\beta_{rs}\gamma_{pq}^*}^*\right]^{-1}.
\]
If (i) $k=r$ and $i \neq p$ or (ii) $i \neq r$, $k \neq p$ and $i \neq p$, then the Equation~\eqref{eq5} reduces to
\[
I + \alpha_{ij}\delta_{kl}^*\beta_{rs}\gamma_{pq}^* - \gamma_{pq}\beta_{rs}^*\delta_{kl}\alpha_{ij}^*
= \left[ E_{\alpha_{ij}\delta_{kl}^*}, E_{\gamma_{pq}\beta_{rs}^*}^*\right]^{-1}.
\]
If  $i = r$ or $k = p$ or if $i \neq p$ and $k \neq r$ , then the Equation~\eqref{eq4} becomes
\begin{align*}
 \left[ [E_{\alpha_{ij}}, E_{\delta_{kl}}],\right.&\left. [ E_{\beta_{rs}}^*,E_{\gamma_{pq}}^*] \right](z,x,f)\\
& =I-\delta_{kl}\alpha_{ij}^*\beta_{rs}\gamma_{pq}^* -\alpha_{ij}\delta_{kl}^*\gamma_{pq}\beta_{rs}^*
+\gamma_{pq}\beta_{rs}^*\alpha_{ij}\delta_{kl}^*+\beta_{rs}\gamma_{pq}^*\delta_{kl}\alpha_{ij}^* \\
&\hspace{1cm} 
- \gamma_{pq}\beta_{rs}^*\alpha_{ij}\delta_{kl}^*\gamma_{pq}\beta_{rs}^* +\beta_{rs}\gamma_{pq}^*\delta_{kl}\alpha_{ij}^*\beta_{rs}\gamma_{pq}^*
+ \delta_{kl}\alpha_{ij}^*\beta_{rs}\gamma_{pq}^*\delta_{kl}\alpha_{ij}^*\\
&\hspace{1cm} - \alpha_{ij}\delta_{kl}^*\gamma_{pq}\beta_{rs}^* \alpha_{ij}\delta_{kl}^*
+\delta_{kl}\alpha_{ij}^*\beta_{rs}\gamma_{pq}^*\alpha_{ij}\delta_{kl}^*\gamma_{pq}\beta_{rs}^*\\
&\hspace{1cm} + \delta_{kl}\alpha_{ij}^*\beta_{rs}\gamma_{pq}^*\delta_{kl}\alpha_{ij}^*\beta_{rs}\gamma_{pq}^* + \alpha_{ij}\delta_{kl}^*\gamma_{pq}\beta_{rs}^*\alpha_{ij}\delta_{kl}^*\gamma_{pq}\beta_{rs}^* . \numberthis{eq6}
\end{align*}

If (i) $i = r$ and $k \neq p$ or (ii) $i \neq p, k\neq r$ and $k \neq p$, then the Equation~\eqref{eq6} reduces to
\[
I -\delta_{kl}\alpha_{ij}^*\beta_{rs}\gamma_{pq}^*  +\gamma_{pq}\beta_{rs}^*\alpha_{ij}\delta_{kl}^*
= \left[ E_{\delta_{kl}\alpha_{ij}^*}, E_{\gamma_{pq}\beta_{rs}^*}^*\right].
\]
If (i) $k=p$ and $i \neq r$ or (ii) $i \neq p, k \neq r$ and $i \neq r$, then the Equation~\eqref{eq6} reduces to
\[
I  -\alpha_{ij}\delta_{kl}^*\gamma_{pq}\beta_{rs}^* +\beta_{rs}\gamma_{pq}^*\delta_{kl}\alpha_{ij}^*
= \left[ E_{\alpha_{ij}\delta_{kl}^*}, E_{\beta_{rs}\gamma_{pq}^*}^*\right].\qedhere
\]
 \end{proof}

\vspace{2cm}

\noindent
{\it Acknowledgements.} The author is indebted to her advisor B. Sury for his valuable suggestions and careful verification of these computations at various stages during the preparation of this manuscript. She also likes to thank Ravi A. Rao for many illuminating discussions during the course of this work. The author would also like to thank Mohamed Barakat for introducing her to the power of GAP.

\begin{bibdiv}
\begin{biblist}

\bib{aa1}{article}{
      author={Ambily, A.~A.},
       title={{Normality and $K_1$-stability of Roy's elementary orthogonal
  group}},
     journal={Preprint (2014)},
}

\bib{aarr}{article}{
      author={Ambily, A.~A.},
      author={Rao, Ravi~A.},
       title={{Extendability of quadratic modules over a polynomial extension
  of an equicharacteristic regular local ring}},
     journal={Preprint (2013)},
         url={http://www.isibang.ac.in/~statmath/eprints/2013/11.pdf},
}

\bib{MR1115826}{article}{
      author={Bak, Anthony},
       title={{Nonabelian {$K$}-theory: the nilpotent class of {$K_1$} and
  general stability}},
        date={1991},
        ISSN={0920-3036},
     journal={$K$-Theory},
      volume={4},
      number={4},
       pages={363\ndash 397},
         url={http://dx.doi.org/10.1007/BF00533991},
}

\bib{gap}{book}{
      author={et~al., M.~Sch\"onert},
       title={Gap -- groups, algorithms, and programming},
        date={1994},
}

\bib{MR2822507}{article}{
      author={Hazrat, R.},
      author={Stepanov, A.},
      author={Vavilov, N.},
      author={Zhang, Z.},
       title={{The yoga of commutators}},
        date={2011},
        ISSN={0373-2703},
     journal={Zap. Nauchn. Sem. S.-Peterburg. Otdel. Mat. Inst. Steklov.
  (POMI)},
      volume={387},
      number={Teoriya Predstavlenii, Dinamicheskie Sistemy, Kombinatornye
  Metody. XIX},
       pages={53\ndash 82},
         url={http://dx.doi.org/10.1007/s10958-011-0617-y},
}

\bib{MR0427303}{article}{
      author={Quillen, Daniel},
       title={{Projective modules over polynomial rings}},
        date={1976},
        ISSN={0020-9910},
     journal={Invent. Math.},
      volume={36},
       pages={167\ndash 171},
}

\bib{MR0231844}{article}{
      author={Roy, Amit},
       title={{Cancellation of quadratic form over commutative rings}},
        date={1968},
        ISSN={0021-8693},
     journal={J. Algebra},
      volume={10},
       pages={286\ndash 298},
}

\bib{MR0472792}{article}{
      author={Suslin, A.~A.},
       title={{The structure of the special linear group over rings of
  polynomials}},
        date={1977},
        ISSN={0373-2436},
     journal={Izv. Akad. Nauk SSSR Ser. Mat.},
      volume={41},
      number={2},
       pages={235\ndash 252},
}

\end{biblist}
\end{bibdiv}

\end{document}